\newtheorem{thm}{Theorem}[section]
\newtheorem{cor}[thm]{Corollary}
\newtheorem{prop}[thm]{Proposition}
\newtheorem{lem}[thm]{Lemma}
\newtheorem{conj}[thm]{Conjecture}
\newtheorem{quest}[thm]{Question}
\theoremstyle{definition}
\newtheorem{defn}[thm]{Definition}
\newtheorem{exmp}[thm]{Example}
\newtheorem{prob}[thm]{Problem}
\numberwithin{equation}{section}
\theoremstyle{remark}
\newtheorem{rem}[thm]{Remark}
\renewcommand{\qed}{\unskip\nobreak\quad\qedsymbol}
\newcommand{\setone}{\mathbbm{1}}
\newcommand{\setA}{\mathbb{A}}
\newcommand{\setT}{{\mathbb{T}}}
\newcommand{\setR}{{\mathbb{R}}}
\newcommand{\setZ}{{\mathbb{Z}}}
\newcommand{\setN}{{\mathbb{N}}}
\begin{document}

\title{Fluctuation of Ergodic Averages and Other Stochastic Processes}

\author{S. Mondal}
\address{Department of Mathematics, The Ohio State University\\
Columbus, OH 43210, USA\\}
\email{mondal.56@osu.edu}

\author{J. Rosenblatt}
\address{Department of Mathematics\\
University of Illinois at Urbana-Champaign\\
Urbana, IL 61801}
\email{rosnbltt@illinois.edu}

\author{M. Wierdl}
\address{Department of Mathematical Sciences\\
University of Memphis\\
Memphis, TN 38152}
\email{mwierdl@memphis.edu}

\begin{abstract}  For an ergodic map $T$ and a non-constant, real-valued $f\in L^1$, the ergodic averages $A_Nf(x) =\frac 1N\sum\limits_{n=1}^N f (T^n x)$ converge a.e., but  the convergence is never monotone.  Depending on particular properties of the function $f$, the averages $A_Nf(x)$ may or may not actually fluctuate around the mean value infinitely often a.e.

We will prove that a.e. fluctuation around the mean is the generic behavior.  That is, for a fixed ergodic $T$, the generic non-constant $f\in L^1$ has the averages $A_Nf(x)$ fluctuating around the mean infinitely often for almost every $x$.

We also consider fluctuation for other stochastic processes like subsequences of the ergodic averages, convolution operators, weighted averages, uniform distribution and martingales.  We will show that in general, in these settings fluctuation around the limit infinitely often persists as the generic behavior.
\end{abstract}
\date{}
\maketitle

\tableofcontents

\section{Introduction}\label{Intro}

The aim of this paper is to analyze the non-monotonicity and the fluctuation of a variety of convergent stochastic processes. Suppose that $(X,\mathcal{B},\mu)$ is a standard probability space, $T_N: L^1 \to L^1 $ is a sequence of bounded, linear operators such that for every real-valued, integrable function $f$, $(T_Nf)$ converges a.e. to $T f$ for some bounded linear operator\linebreak $T:L^1\to L^1$. 
\begin{defn}\label{non-monotone}
With this setup, we say that 
the convergence of $T_N f$ is \emph{non-monotone} if there does not exist any set $M\subset X$ with $\mu(M) > 0$ satisfying the following:\\ For every $x\in M$, $T_Nf(x)\geq T_{N+1}f(x)$ eventually, or $T_Nf(x)\leq T_{N+1}f(x)$ eventually.
\end{defn}

Given a sequence of bounded linear operators and its limit, we are interested in the following questions.

\begin{quest}\label{ques:1.1}[Non-monotonicity] 
For any function $f\in L^1$, is the convergence of $T_N f$ {non-monotone}?
\end{quest}

When this question has an affirmative answer, we ask a more pertinent question about the behavior of convergence. Namely,
\begin{quest}\label{ques:1.2}[Fluctuation] 
 Can we say that for almost every $x\in X$, $T_N f(x)$ \emph{fluctuates} around   its limit infinitely often, that is, is $(T_N f(x)- T f(x))>0$ for infinitely many $N$, and $(T_N f(x)- T f(x))<0$ for infinitely many (different) $N$? 
\end{quest}

The study of non-monotonicity and fluctuation of convergent stochastic processes is a topic of intrinsic interest in probability theory. For a symmetric random walk, non-monotonicity was first investigated by Erdős and Rényi \cite{Erdos3}. They proved that the length of the longest run of heads in $n$ tosses of a fair coin will approximately be equal to $\log n$. The question of fluctuation for such stochastic processes was first studied by Chung and Hunt \cite{Chung-Hunt}, and later by Chung and Erdős \cite{Erdos1}, and Cs\'aki, Erdős and R\'ev\'esz \cite{Erdos2}. They found bounds on the length of the longest excursion \footnote{There is a subtle difference between fluctuation and excursion. The former one refers to strict inequality, whereas the later one does not require the inequality to be strict.} in a symmetric random variable.

While our primary focus is on various ergodic averages, we will also study Lebesgue differentiation, 
dyadic martingales, uniform distribution, and weighted averages. It turns out that for these stochastic processes, there are functions $f$ for which $T_Nf$ fails to fluctuate on a set of positive measure of $x$; nonetheless, for the generic function $f$, the stochastic process $T_Nf$ fluctuates around the limit infinitely often a.e.



\section{Organization of the paper}\label{Organization} 

The rest of the paper is organized as follows:

In Section~\ref{Results}, we will state the main results. In Section~\ref{Preliminaries}, we discuss some preliminary concepts that will be needed for the rest of the paper. In Section~\ref{Fluctuation}, we will prove the results related to non-monotonicity and fluctuation of ergodic averages, including \Cref{updown}, \Cref{thm:characterization}, \Cref{thm:generic fluctuation} and \Cref{thm:5}. We deal with the fluctuation of convolution operators in Section~\ref{Derive}. In this section, we prove \Cref{thm:bothwaysae}. \Cref{thm:osc} is proved in Section~\ref{Mart}. In Section~\ref{UD} and Section~\ref{GenCET}, we discuss the non-monotonicity and fluctuation of uniformly distributed sequences and weighted ergodic averages, respectively.

\section{Summary of results}\label{Results}

In this section we state a variety of results about non-monotonicity and fluctuation.  The proofs for these results are given in the upcoming sections as stated in Section~\ref{Organization}.

As a basic model case, take the classical ergodic averages. We have an ergodic, invertible, measure-preserving mapping $T$ on a standard probability space $(X,\mathcal{B}, \mu, T)$. For a function $f\in L^1= L^1(X,\mathcal{B}, \mu)$, 
by the Pointwise Ergodic Theorem, $\setA_{[N]} f(T^n x)$ converges to $\int f$ for a.e. $x$. (See \eqref{notation} for the meaning of the averaging notation).

In the special case, if we consider the irrational rotation $(\setT,{\Sigma}, \lambda, R_\alpha)$ and a continuous function $f$, we can see that for almost every $x$, the convergence of $\setA_{[N]} f(x+n \alpha)$ to $\int f$ is non-monotone. (See \Cref{non-monotone} for the definition of non-monotonicity.) Indeed, compare $\setA_{[N]} f(x+n \alpha)$ with $\setA_{[N+1]} f(x+n \alpha)$, and use the uniform distribution of $(n\alpha)$ mod 1 in $\setT$. See Section~\ref{Fluctuation} for details.

Our first proposition shows that this is, in fact, true for any ergodic transformation $T$ and any integrable function $f$.

\begin{prop}\label{updown} 
Suppose $f\in L^1$ is real-valued and not a constant function.  Then  \ for a.e. $x$,  the averages $\setA_{[N]} f(T^n x)$ are non-monotone.
\end{prop}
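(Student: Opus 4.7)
The plan is to reduce monotonicity of $(A_N f(x))_N$ to the behavior of the single new term $f(T^{N+1}x)$ relative to the current average, and then use the pointwise ergodic theorem together with ergodic recurrence to show that this term oscillates across the mean $c := \int f\, d\mu$ infinitely often.

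First, I would expand the one-step difference as
\[
A_{N+1}f(x) - A_N f(x) \;=\; \frac{1}{N+1}\bigl(f(T^{N+1}x) - A_N f(x)\bigr),
\]
so that the sign of the successive increment coincides with the sign of $f(T^{N+1}x) - A_N f(x)$. In particular, to rule out eventual monotonicity on any set of positive measure, it suffices to show that for a.e.\ $x$, both $f(T^{N+1}x) > A_N f(x)$ and $f(T^{N+1}x) < A_N f(x)$ hold for infinitely many $N$.

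Second, since $f$ is not a.e.\ constant and $\int(f-c)\,d\mu = 0$, both $\{f > c\}$ and $\{f < c\}$ have positive measure, so one can fix $\varepsilon > 0$ so small that $A_\pm := \{x : \pm(f(x) - c) > \varepsilon\}$ both have positive measure. I would then apply the pointwise ergodic theorem three times --- to $\mathbbm{1}_{A_+}$, to $\mathbbm{1}_{A_-}$, and to $f$ itself --- and intersect the resulting conull sets to obtain a full-measure set $X_0 \subset X$ on which the orbit $(T^n x)_{n \geq 1}$ visits $A_+$ infinitely often, visits $A_-$ infinitely often, and on which $A_N f(x) \to c$.

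Third, fix $x \in X_0$ and choose $N_0 = N_0(x)$ with $|A_N f(x) - c| < \varepsilon/2$ for all $N \geq N_0$. Each $N \geq N_0$ with $T^{N+1}x \in A_+$ then satisfies $f(T^{N+1}x) > c + \varepsilon > A_N f(x)$, hence $A_{N+1}f(x) > A_N f(x)$; and each $N \geq N_0$ with $T^{N+1}x \in A_-$ gives the strict reverse inequality. Both such sets of $N$ are infinite, so on $X_0$ the sequence $(A_N f(x))_N$ is neither eventually non-increasing nor eventually non-decreasing, which is precisely the required non-monotonicity.

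I do not anticipate any serious obstacle: the essential content is the one-step identity together with standard ergodic recurrence. The only minor technical care needed is to work on a single conull set so that the three almost-sure statements (convergence of the averages, and recurrence to $A_+$ and to $A_-$) all hold simultaneously at the same $x$.
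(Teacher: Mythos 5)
Your proposal is correct and follows essentially the same route as the paper's proof: the one-step identity reducing the sign of $\setA_{[N+1]}f - \setA_{[N]}f$ to the sign of $f(T^{N+1}x)-\setA_{[N]}f(T^nx)$, the choice of positive-measure sets where $f$ exceeds (resp.\ falls below) its mean by a fixed $\varepsilon$, and the combination of the pointwise ergodic theorem with recurrence into those sets. The only cosmetic difference is that the paper first normalizes $f$ to be mean-zero.
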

We can also give a somewhat more detailed result about the gap that occurs between two consecutive averages.

\begin{prop}\label{prop:updowngap} 
Suppose $f\in L^1$ is real-valued and not a constant function.  Then   \ there exists $d > 0$ such that for all $T$ ergodic, we have for a.e. $x$, 
\begin{align*}
\setA_{[N+1]} f(T^n x)&> \frac dN + \setA_{[N]} f(T^n x) \text{ infinitely often, and }\\
\setA_{[N+1]} f(T^n x) &< -\frac dN + \setA_{[N]} f(T^n x) \text{ infinitely often.}
\end{align*} 
\end{prop}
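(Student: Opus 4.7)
The plan is to reduce the proposition to \Cref{updown} (and the pointwise ergodic theorem) via the standard one-step recursion for Cesàro averages. Writing $S_N(x)=\sum_{n=1}^{N} f(T^nx)$ so that $\setA_{[N]} f(T^nx)=S_N(x)/N$, a direct computation gives the identity
\begin{equation*}
\setA_{[N+1]} f(T^nx)-\setA_{[N]} f(T^nx)\;=\;\frac{f(T^{N+1}x)-\setA_{[N]} f(T^nx)}{N+1}.
\end{equation*}
Thus establishing the desired one-sided lower bounds on $\setA_{[N+1]} f-\setA_{[N]} f$ is equivalent to showing that, infinitely often, $f(T^{N+1}x)-\setA_{[N]} f(T^nx)$ exceeds some fixed positive constant (resp.\ is below the negative of one).

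The choice of $d$ comes entirely from $f$ and the ambient measure $\mu$, which is exactly what is needed for the clause ``for all $T$ ergodic.'' Since $f$ is real-valued, integrable, and non-constant, we can pick $d>0$ small enough that the two level sets
\begin{equation*}
E_{+}\;=\;\Bigl\{y:f(y)>\textstyle\int f+3d\Bigr\},\qquad E_{-}\;=\;\Bigl\{y:f(y)<\textstyle\int f-3d\Bigr\}
\end{equation*}
both have strictly positive $\mu$-measure. (Any $d$ smaller than one third of both $\operatorname{ess\,sup}f-\int f$ and $\int f-\operatorname{ess\,inf}f$ works; if $f$ is unbounded, we truncate first.) This $d$ depends only on $f$.

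Next I would invoke the two ingredients that hold a.e.\ for every ergodic $T$. By ergodicity, for a.e.\ $x$ the orbit $(T^{N+1}x)_{N\geq 1}$ enters $E_{+}$ infinitely often and enters $E_{-}$ infinitely often. By the pointwise ergodic theorem, for a.e.\ $x$ one has $\setA_{[N]} f(T^nx)\to\int f$, so $\bigl|\setA_{[N]} f(T^nx)-\int f\bigr|<d$ for all sufficiently large $N$. Intersecting the three full-measure sets, we obtain for a.e.\ $x$ infinitely many $N$ with $f(T^{N+1}x)>\int f+3d$ and simultaneously $\setA_{[N]} f(T^nx)<\int f+d$; for those $N$,
\begin{equation*}
f(T^{N+1}x)-\setA_{[N]} f(T^nx)\;>\;2d,
\end{equation*}
and the identity above gives $\setA_{[N+1]} f(T^nx)-\setA_{[N]} f(T^nx)>\tfrac{2d}{N+1}\ge \tfrac{d}{N}$ (valid for all $N\geq 1$). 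The symmetric argument with $E_{-}$ yields the opposite inequality infinitely often.

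There is no real obstacle: the algebraic identity is elementary, and the two ``for a.e.\ $x$'' statements combined are exactly the content needed. The only delicate points are the mild quantitative adjustment (we use $3d$ in the definition of $E_\pm$ and $d$ in the control on $\setA_{[N]} f$ so that the difference is at least $2d$, which then absorbs the factor $(N+1)/N$), and checking that $d$ can be chosen uniformly in $T$, which is automatic because $E_\pm$ are defined purely in terms of $f$, $\mu$, and $\int f$.
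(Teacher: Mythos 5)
Your proposal is correct and follows essentially the same route as the paper: both rest on the one-step identity comparing $\setA_{[N+1]}f(T^nx)$ with $f(T^{N+1}x)$ and $\setA_{[N]}f(T^nx)$, recurrence of the orbit into positive-measure level sets of $f$ (your $E_\pm$, the paper's $A$ and $B$ with $d=\delta/6$), and the a.e.\ convergence $\setA_{[N]}f(T^nx)\to\int f$. The only difference is cosmetic: you argue directly with explicit constants while the paper phrases the same estimate as a contradiction.
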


Different aspects of fluctuation for the classical ergodic averages have already been studied by several authors. For instance, Bishop ~\cite{Bishop} showed that one can give universal estimates for the
probability that there are many fluctuations in the ergodic averages of $L^1$ functions. In fact, he proved an exact analogy of Doob’s upcrossing inequality for martingales, which was later improved by Kalikow and Weiss ~\cite{K-W}. The most relevant result related to our paper is the following theorem ~\cite[Theorem 4]{Hal}.

\begin{thm}\label{thm:Hal}[Halász]
For any $f\in L^1$, $\big(\setA_{[N]}f(T^n x)-\int f\big)$ almost everywhere changes sign infinitely often in the weaker sense that it cannot be eventually positive or negative.
\end{thm}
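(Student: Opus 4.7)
The plan is to analyze the partial sums $S_N(x) := \sum_{n=1}^N f(T^n x)$; after normalizing so that $\int f = 0$, the claim reduces to showing that both $\{x : S_N(x) > 0 \text{ eventually}\}$ and $\{x : S_N(x) < 0 \text{ eventually}\}$ have measure zero, or equivalently that $S_N(x) \leq 0$ holds for infinitely many $N$ and $S_N(x) \geq 0$ holds for infinitely many $N$, on a set of full measure. Everything will be extracted from the shift identity
\[
S_N(Tx) \;=\; S_{N+1}(x) - f(Tx).
\]

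Set $\overline{g}(x) := \limsup_N S_N(x)$ and $\underline{g}(x) := \liminf_N S_N(x)$. The identity above yields $\overline{g}\circ T = \overline{g} - f\circ T$ and $\underline{g}\circ T = \underline{g} - f\circ T$ a.e.\ (using that $f \circ T$ is a.e.\ finite). Consequently the sets $\{\overline{g} = +\infty\}$ and $\{\underline{g} = -\infty\}$ are $T$-invariant modulo null sets, so by ergodicity each has measure $0$ or $1$. This produces a two-sided dichotomy: in the easy case $\overline{g} = +\infty$ and $\underline{g} = -\infty$ a.e., so $S_N(x)$ is unbounded above and below and cannot be eventually of either sign; otherwise, say $\overline{g}$ is a.e.\ finite (the symmetric subcase is analogous), and the functional equation exhibits $f$ as a measurable coboundary $f = h - h\circ T$ with $h := \overline{g} \circ T^{-1}$ (using invertibility of $T$), so that telescoping gives
\[
S_N(x) \;=\; h(Tx) - h(T^{N+1}x).
\]

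In the coboundary case I would exploit ergodic recurrence: for a.e.\ $x$ the orbit $(T^n x)_{n \geq 1}$ visits every positive-measure set infinitely often, so $h(T^{N+1}x)$ comes arbitrarily close to both $\mathrm{ess\,sup}\, h$ and $\mathrm{ess\,inf}\, h$ infinitely often. Comparing with the fixed value $h(Tx)$ forces each of $S_N \leq 0$ and $S_N \geq 0$ to hold infinitely often, unless $h$ is essentially constant (in which case $f \equiv 0$ a.e.\ and the theorem holds vacuously). The main obstacle I expect is the borderline subcase in which $\mathrm{ess\,sup}\, h$ (or $\mathrm{ess\,inf}\, h$) is attained on a positive-measure set $B$: on $T^{-1}B$ one has $S_N \geq 0$ for every $N$, but ergodic recurrence forces $h(T^{N+1}x) = \mathrm{ess\,sup}\, h$ infinitely often, giving $S_N = 0$ infinitely often. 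This degeneracy, where the sign is weakly but not strictly preserved, is precisely the reason Halász's conclusion is phrased in the weaker sense that $S_N$ cannot be eventually of \emph{strict} sign, rather than as a true alternation.
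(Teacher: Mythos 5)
Your reduction is incomplete at the very first step, and the omission is exactly where the substance of Halász's theorem lies. From the negation of the ``easy case'' you pass to ``say $\overline{g}$ is a.e.\ finite (the symmetric subcase is analogous)'', but the invariance argument only tells you that each of the four sets $\{\overline{g}=+\infty\}$, $\{\overline{g}=-\infty\}$, $\{\underline{g}=-\infty\}$, $\{\underline{g}=+\infty\}$ has measure $0$ or $1$; it does not force either $\overline{g}$ or $\underline{g}$ to be finite. The unhandled configurations are $\overline{g}\equiv-\infty$ (i.e.\ $S_N\to-\infty$ a.e.) and $\underline{g}\equiv+\infty$ (i.e.\ $S_N\to+\infty$ a.e.). In these ``transient'' cases your functional equation $\overline{g}\circ T=\overline{g}-f\circ T$ reads $-\infty=-\infty$ and yields no transfer function, yet these are precisely the scenarios in which the conclusion would fail (the sums, hence the averages, are eventually of one strict sign), so they must be ruled out. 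Ruling them out is not soft: Birkhoff only gives $S_N/N\to 0$, which is perfectly compatible with, say, $S_N\sim-\sqrt{N}$; the exclusion is equivalent to the recurrence of the cocycle $(S_N)$ (Atkinson's theorem), and nothing in your proposal supplies it.

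This is where the paper's argument (its proof of Theorem~\ref{thm:characterization}, modelled on Halász's) differs in an essential way: instead of $\limsup$/$\liminf$, it uses the one-sided extremum of the partial sums, $F(x)=\inf_{N}\sum_{n\in[0,N)}f(T^nx)$ (or the corresponding supremum), which is finite on a set of positive measure as soon as the sums are eventually of one sign there --- in particular it is finite a.e.\ in the transient case, where your $\overline{g},\underline{g}$ give nothing. Because $F$ satisfies only the inequality $F\circ T\ge F-f$, one sets $K=f+F\circ T-F\ge 0$ and then shows $\int K=0$ by applying the ergodic theorem to $K$ and using returns of $T^Nx$ to a set $\{|F|\le M\}$ of positive measure to kill the boundary term $\bigl(F(T^Nx)-F(x)\bigr)/N$; this forces $K=0$ a.e., i.e.\ the exact coboundary identity $f=F-F\circ T$, after which the recurrence/barrier analysis you describe (essentially \Cref{cor:1} and the barrier dichotomy of Theorem~\ref{thm:characterization}) applies and, in the transient case, contradicts $S_N\to\mp\infty$ outright. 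Your final paragraph --- the telescoped form $S_N(x)=h(Tx)-h(T^{N+1}x)$, recurrence into $\{h>h(Tx)\}$ and $\{h<h(Tx)\}$, and the observation that a barrier only degrades strict sign changes to the weak conclusion --- is correct and is in the spirit of the paper; the genuine gap is that you reach the coboundary representation only under a finiteness hypothesis on $\limsup$ or $\liminf$ that you have not justified, and cannot justify without an argument of the $\inf$-of-partial-sums (or Atkinson recurrence) type.
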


Halász's theorem suggests that for every $f\in L^1$, $\setA_{[N]}f(T^n x)$ may fluctuate around the mean infinitely often for almost every $x$. In the following example, we see that it may not be the case.
\begin{exmp}
 Take our probability space to be $[0,1)$ with the usual Lebesgue measure $\lambda$.  Take any ergodic map $T$ and any Lebesgue measurable set $F\subset [0,1)$ with $\lambda(F)>0.$ Let $f$ be the coboundary with the transfer function $\setone_F$, that is, $f = \setone_F-\setone_F\circ T$. Then the possible values of $\setone_F(Tx)-\setone_F\circ T^{N+1}(x)$ for  $x\in T^{-1}F$ are $1-1 =0$ and $1-0=1$.  So for these $x$, we get $\setone_F(Tx) - \setone_F\circ T^{N+1}(x)$ changing value infinitely often, but the sign is never strictly negative.  At the same time, the possible values of $\setone_F(Tx)-\setone_F\circ T^{N+1}(x)$ for  $x\not\in T^{-1}F $ are $0-1 =-1$ and $0-0=0$. So, for these $x$, we get $\setone_F(Tx) - \setone_F\circ T^{N+1}(x)$, again changing value infinitely often, but the sign is never strictly positive. So, we do not have the averages $\setA_{[N]}f(T^n x)$ fluctuating around the mean in either case.
 \end{exmp}

\begin{defn}\label{barrier}
Let $F$ be a real-valued, measurable function. We say that $F$ has an \emph{upper barrier} if there is a set $P$ of positive measure on which $F$ is constant
$p$ and such that $F(y) \leq p$ for a.e. $y \in X$. We say that $F$ has a \emph{lower barrier} if there is a set $Q$ of
positive measure on which $F$ is constant $q$ and such that $F(y) \geq q$ for a.e. $y \in X$.
\end{defn}
In the next theorem, we will characterize those functions for which $\setA_{[N]}f(T^n x)$ fails to fluctuate on a set of positive measure.

\begin{thm}\label{thm:characterization} 
Assume that $F$ has either an
upper barrier or a lower barrier, and $f=F-F\circ T$ is an integrable function such that $\int f=0$. Then there exists a set $E$ of positive measure such that $\setA_{[N]}f(T^n x)$ fails to fluctuate on $E$, that is, for every $x\in E$,  \  either $\setA_{[N]}f(T^n x)\geq 0$ eventually or $\setA_{[N]}f(T^n x)\leq 0$ eventually.

Conversely, assume that for some $f\in L^1$ with $\int f=0$ , $\setA_{ [N]}f(T^n x)$ fails to fluctuate on a set of positive measure. Then $f$ can be expressed as $F-F\circ T$ for some measurable function $F$ that has either an upper or a lower barrier.
\end{thm}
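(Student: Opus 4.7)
My plan splits the theorem into the forward direction (a short telescoping computation) and the converse (the substantive part, where the transfer function must be constructed).

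\textbf{Forward direction.} Suppose $F$ has an upper barrier with value $p$ on a set $P$ of positive measure. Since $f = F - F\circ T$ telescopes,
\begin{equation*}
S_N(x) := \sum_{n=1}^N f(T^n x) = F(Tx) - F(T^{N+1}x).
\end{equation*}
Set $E := T^{-1}P$, which has positive measure by $T$-invariance of $\mu$. For every $x \in E$, $F(Tx) = p$ and $F(T^{N+1}x) \le p$, so $\setA_{[N]}f(T^n x) = S_N(x)/N \ge 0$ for all $N$. Thus $\setA_{[N]}f(T^n x)$ fails to fluctuate around $\int f = 0$ on $E$. The lower-barrier case is symmetric.

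\textbf{Converse.} One of the two one-sided failure cases must hold on a set of positive measure; after replacing $f$ by $-f$ if needed, assume $A := \{x : \setA_{[N]}f(T^n x) \ge 0 \text{ eventually}\}$ has positive measure. Writing $A = \bigcup_{M\ge 1} B_M$ with $B_M := \{x : S_N(x) \ge 0 \text{ for all } N \ge M\}$, some $B_{M_0}$ has positive measure. Define
\begin{equation*}
\Lambda(x) := \liminf_{N\to\infty} S_N(x).
\end{equation*}
On $B_{M_0}$ we have $\Lambda \ge 0$. Halász's \Cref{thm:Hal} applied with $\int f = 0$ gives that a.e.\ $S_N(x) \le 0$ infinitely often, so $\Lambda \le 0$ a.e.; combining these, $\Lambda = 0$ a.e.\ on $B_{M_0}$. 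The shift identity $S_N(Tx) = S_{N+1}(x) - f(Tx)$ and a.e.\ finiteness of $f$ yield the cohomological relation $\Lambda(Tx) = \Lambda(x) - f(Tx)$ a.e. The set $\{\Lambda > -\infty\}$ is therefore $T$-invariant mod null and contains $B_{M_0}$, hence has full measure by ergodicity.

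Define $F := \Lambda \circ T^{-1}$; invertibility of $T$ makes this well-defined, and the cohomology identity rearranges to $F(x) - F(Tx) = f(x)$ a.e. From $\Lambda \le 0$ a.e.\ we get $F \le 0$ a.e., and from $\Lambda = 0$ on $B_{M_0}$ we get $F = 0$ on $T(B_{M_0})$, a positive-measure set; so $F$ has an upper barrier at value $0$. Had we started instead from ``$\le 0$ eventually on a positive-measure set'', applying the same argument to $-f$ produces a lower barrier. The delicate step is the use of Halász's theorem: without its global bound $\Lambda \le 0$ a.e., one could only conclude $\Lambda$ is finite on $B_{M_0}$ and would not obtain $F \le 0$ everywhere, so the barrier property would be lost.
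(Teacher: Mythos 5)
Your proof is correct, but the converse follows a genuinely different route from the paper's. The paper defines the transfer function as $F(x)=\inf_{N}\sum_{n\in[0,N)}f(T^nx)$; because the infimum does not commute with shifting off the first term, it only yields the inequality $F\circ T\ge F-f$, and the paper then has to introduce $K=f+F\circ T-F\ge 0$, run the pointwise ergodic theorem together with a recurrence argument to force $\int K=0$, and finally invoke \Cref{cor:1} (no upper and no lower barrier implies a.e.\ fluctuation) to extract the barrier. You instead take $\Lambda=\liminf_N S_N$: since the liminf is a tail quantity, the shift identity gives the \emph{exact} cocycle relation $\Lambda\circ T=\Lambda-f\circ T$ at once, so the whole $K\ge 0$, $\int K=0$ step disappears, and you obtain the barrier directly by squeezing $\Lambda$ between the hypothesis ($\Lambda\ge 0$ on $B_{M_0}$) and Hal\'asz's \Cref{thm:Hal} ($\Lambda\le 0$ a.e.), with no appeal to \Cref{cor:1}. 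The trade-off is that you consume \Cref{thm:Hal} as a black box (and, as you rightly flag, it is the indispensable input: without $\Lambda\le 0$ a.e.\ you get no barrier), whereas the paper's argument is only ``modelled on'' Hal\'asz and is self-contained apart from its own \Cref{cor:1}; your version is shorter and also pins down which barrier occurs (upper for the eventually-nonnegative case, lower after the $-f$ reflection), which the paper's conclusion leaves unspecified.

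Two small points of hygiene. In the forward direction, $F(T^{N+1}x)\le p$ holds only for a.e.\ $x$ for each $N$, so you should discard a null set from $T^{-1}P$ before asserting non-fluctuation ``for every $x\in E$''; the same silent adjustment is made in the paper. In the converse, the definition $F=\Lambda\circ T^{-1}$ uses invertibility of $T$, which is part of the paper's standing hypotheses, so this is fine; alternatively, starting the partial sums at $n=0$ (as the paper does) would give $f=\Lambda_0-\Lambda_0\circ T$ without composing with $T^{-1}$.
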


For a given increasing sequence $(N_i)_i$ of positive integers, we initiate the study of similar types of questions for the averages $\setA_{[N_i]}f(T^n x)$. More precisely, we ask the following questions.
\begin{quest}\label{ques:1.3} 
For any real-valued function $f\in L^1$, is the convergence of $\left(\setA_{[N_i]}^Tf\right)_i$ non-monotone?   
\end{quest}
Similarly,
\begin{quest}\label{ques:1.4} 
 For almost every $x$, do the averages $\setA_{[N_i]}f(T^n x)$ fluctuate almost everywhere around its mean $\int f$, that is, does $\big(\setA_{[N_i]}f(T^n x)-\int f\big)_i$ change sign infinitely often?
\end{quest}
One would expect that the non-monotonicity and the fluctuation of the averages \linebreak $\setA_{[N]}f(T^n x)$ is not something that persists when one takes a subsequence $\setA_{[N_i]}f(T^n x)$.
 In \Cref{prop:nonmonotonicity example}, we will construct a system $(X,\mathcal{B},\mu, T)$ and a subsequence $(N_i)$ such that for every $x$, $\Big(\setA_{[N_{i+1}]}f(T^n x)-\setA_{[N_i]}f( T^n x)\Big)_i$ will be eventually positive or negative depending only on $x$ (not on $i$), thereby answering \Cref{ques:1.3} negatively. On the other hand, the following theorem shows that for any increasing sequence of positive integers $(N_i)$ and for the generic function $f$, the averages $\setA_{[N_i]}f(T^n x)$ exhibit a fluctuating behavior around the mean infinitely often a.e.

\begin{thm}\label{thm:generic fluctuation} 
 If $T$ is ergodic, and $(N_i)$ is an unbounded sequence, then there is a dense $G_\delta$ set $\mathcal{O}$ in $ L^1$ such that for every $f\in \mathcal{O}$ and for almost every $x \in X$, $\setA_{ [N_i]}f(T^n x)$   fluctuates infinitely often around its mean. That is, for a.e. $x$, we have $\setA_{ [N_i]}f(T^n x)> \int f$ for infinitely many $i$ and $\setA_{ [N_i]}f(T^n x)< \int f$ for infinitely many (different) $i$.
\end{thm}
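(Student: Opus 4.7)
The plan is to realize $\mathcal{O}$ as a dense $G_\delta$ subset of $L^1$ via the Baire category theorem. For each pair of positive integers $k, M$, define
\[
\mathcal{V}_{k,M}^+ = \Big\{f \in L^1 : \mu\Big(\bigcup_{i \geq M}\{x : \setA_{[N_i]}f(T^n x) > \textstyle\int f\}\Big) > 1 - \tfrac{1}{k}\Big\},
\]
and let $\mathcal{V}_{k,M}^-$ be its mirror with the reverse inequality. The fluctuation conclusion for $f$ is equivalent to $f \in \bigcap_{k, M}(\mathcal{V}_{k,M}^+ \cap \mathcal{V}_{k,M}^-)$, so it suffices to show that each $\mathcal{V}_{k, M}^\pm$ is open and dense in $L^1$.

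Openness follows from the decomposition $\mathcal{V}_{k, M}^+ = \bigcup_{J \geq M,\,\delta \in \mathbb{Q}_{>0}}\mathcal{V}_{k, M, J, \delta}^+$, where in $\mathcal{V}_{k, M, J, \delta}^+$ the defining union is restricted to $M \leq i \leq J$ and the inequality is thickened to $\setA_{[N_i]}f(T^n x) > \int f + \delta$. Since each $f \mapsto \setA_{[N_i]}f(T^n\cdot)$ is norm-$\leq 1$ on $L^1$, Markov's inequality ensures that a small enough $L^1$-perturbation of any $f \in \mathcal{V}_{k, M, J, \delta}^+$ remains in $\mathcal{V}_{k, M, J, \delta/2}^+ \subset \mathcal{V}_{k, M}^+$. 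For density, given $f$ and $\epsilon > 0$, I would pick $i^* \geq M$ so large (using Birkhoff plus Egorov) that $|\setA_{[N_{i^*}]}f(T^n x) - \int f| < \eta$ on a set $E^*$ of measure $> 1 - \tfrac{1}{4k}$, with $\eta := \epsilon/16$. Then apply Rokhlin's lemma to build a tower $B \sqcup TB \sqcup \cdots \sqcup T^{H'-1}B$ of height $H' > 8k(N_{i^*}+1)$ covering measure $> 1 - \tfrac{1}{8k}$, and define $h$ to be a small positive constant $c = \epsilon/4$ on levels $0, \ldots, H'-2$, with a compensating negative spike at level $H'-1$ so that $\int h = 0$; this gives $\|h\|_1 < \epsilon$. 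For $x$ at any tower level $j \leq H' - N_{i^*} - 2$, the length-$N_{i^*}$ Birkhoff orbit starting at $Tx$ stays strictly below the top level, so the entire sum is $N_{i^*}c$ and $\setA_{[N_{i^*}]}h(T^n x) = c > 2\eta$; this set has measure $> 1 - \tfrac{1}{4k}$. On its intersection with $E^*$, $g := f + h$ satisfies $\setA_{[N_{i^*}]}g(T^n x) - \int g > -\eta + 2\eta = \eta > 0$, so $g \in \mathcal{V}_{k, M}^+$; the construction for $\mathcal{V}_{k, M}^-$ is symmetric.

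The main obstacle is the Rokhlin step. Any bounded $h$ with $\int h = 0$ automatically has $\setA_{[N]}h \to 0$ a.e.\ by the ergodic theorem, so no single perturbation can carry a uniform positive bias at arbitrarily large averaging scales. The resolution is to tailor $h$ to the chosen scale $N_{i^*}$ by taking the tower much taller than $N_{i^*}$, thereby pushing the negative compensating mass to a region that generic length-$N_{i^*}$ orbits do not meet. The unboundedness of $(N_i)$ is essential precisely here: it lets $N_{i^*}$ be taken large enough that the residual $|\setA_{[N_{i^*}]}f - \int f|$ is dominated by the engineered bias of $h$ on a set of near-full measure, while the $L^1$-budget $\|h\|_1 < \epsilon$ is preserved. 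Granted this construction, Baire category applied to the complete metric space $L^1$ yields $\mathcal{O}$ as the required dense $G_\delta$.
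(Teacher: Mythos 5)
Your proposal is correct, and its skeleton is the same as the paper's: you write the target set as a countable intersection of sets $\mathcal{V}_{k,M}^{\pm}$ requiring a one-sided crossing at some $i\ge M$ on measure $>1-\tfrac1k$ (the paper's $S_M^{\pm}$, with $k=M$), and you prove openness exactly as the paper does, by truncating to finitely many indices, thickening the inequality by a $\delta$, and absorbing a small $L^1$-perturbation via Markov's inequality. Where you genuinely diverge is the density step, which is the heart of the argument. The paper reduces to coboundaries $g=h\circ T-h$ with $h\in L^\infty$, passes to a lacunary subsequence of $(N_i)$, and invokes the strong sweeping out property of lacunary sequences (\Cref{lem:sso}, from \cite{ABJLRW}) to produce a small set $E$ with $T^{N_i+1}x\in E$ for some $i\ge M$ for a.e.\ $x$; the perturbation is then of the transfer function, $\tilde h=h+3K\setone_E$, and the telescoped average $\bigl(\tilde h(T^{N_i+1}x)-\tilde h(x)\bigr)/N_i$ is forced positive. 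You instead fix a single large scale $N_{i^*}$ with $i^*\ge M$ (this is where unboundedness enters for you, just as it does for the paper when extracting a lacunary subsequence), use Birkhoff plus Egorov to pin $\setA_{[N_{i^*}]}f$ near $\int f$ on most of the space, and engineer the bias with a Rokhlin tower much taller than $N_{i^*}$: a constant $c$ on the body and a compensating spike on the top level, so that length-$N_{i^*}$ orbits started at low levels never see the spike. Your route is more elementary and self-contained --- it needs only Birkhoff, Egorov and Rokhlin rather than the nontrivial sweeping-out theorem --- and it centers the inequality at $\int f$ directly, whereas the paper's $S_M^{\pm}$ are written with $0$ and the reduction to mean zero is left implicit. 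What the paper's route buys is structural information: its perturbations stay inside the class of coboundaries, in line with the characterization in \Cref{thm:characterization}, and sweeping out produces infinitely many good indices $i\ge M$ at once rather than the single index you use. Two minor remarks: both arguments implicitly assume aperiodicity (a non-atomic space) --- Rokhlin's lemma for you, the strong sweeping out statement for the paper --- and your constant bookkeeping is slightly off but harmless, since $c=4\eta$ gives margin $3\eta$ rather than the stated $2\eta$, which only helps.
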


A salient feature of the proof of Theorem~\ref{thm:generic fluctuation} is that it uses the
\emph{strong sweeping out property} of lacunary sequences.

The next natural question would be to describe those functions $f$ for which the averages $\setA_{ [N_i]}f(T^n x)$ exhibit non-monotonicity or fluctuating behavior (around the mean) for a set of positive measure. Our results in this regard are far from complete.
It turns out that these questions are related to an intrinsic dynamical property of the sequence $(N_i)$, which we will call the \emph{complete recurrence property}. See Subsection \ref{strong} for the definition of the complete recurrence property.

In the special case when $(N_i)$ is a syndetic sequence (i.e., a sequence that has bounded gaps between the consecutive terms), for a `typical' $\alpha$, we can say something definitive about the non-monotonicity of the averages with respect to the rotation by $\alpha$.

\begin{thm}\label{thm:5} 
Let $(N_i)$ be a syndetic sequence. Then there exists a countable subset $E$ of $\setT$ such that the following holds.\\  If $\alpha\in \setT\setminus E$ and $f$ is a non-constant, integrable function, then for a.e. $x$, the averages $\setA_{ [N_i]}f(x+ n\alpha)$ are non-monotone.
\end{thm}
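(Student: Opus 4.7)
My plan is a contradiction argument driven by the telescoping identity
\begin{equation*}
\setA_{[N_{i+1}]} f(x) - \setA_{[N_i]} f(x) \;=\; \frac{d_i}{N_{i+1}}\left[\frac{1}{d_i}\sum_{n=N_i+1}^{N_{i+1}} f(x + n\alpha) \;-\; \setA_{[N_i]} f(x)\right],
\end{equation*}
where $d_i := N_{i+1}-N_i$: the sign of the difference equals the sign of the bracket. Since $(N_i)$ is syndetic, $d_i \leq K$ for a uniform $K$, so by pigeonhole some $d \in \{1,\dots,K\}$ has $I_d := \{i : d_i = d\}$ infinite. For $i \in I_d$ the bracket reads $g_d(x + N_i\alpha) - \setA_{[N_i]} f(x)$, where $g_d(y) := \frac{1}{d}\sum_{j=1}^{d} f(y + j\alpha)$. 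A short coboundary argument shows that $g_d$ is non-constant whenever $\alpha$ is irrational and $f$ is non-constant: if $g_d$ were constant, comparing the identity with its $\alpha$-shift forces $f(y + d\alpha) = f(y)$ a.e., and ergodicity of $R_{d\alpha}$ (which holds because $d\alpha$ is irrational) makes $f$ constant.

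Suppose for contradiction that $M$ has positive measure and $\setA_{[N_{i+1}]} f \geq \setA_{[N_i]} f$ on $M$ eventually (the reverse case is symmetric). After shrinking $M$ so that a single index $i_0$ works uniformly, we have $g_d(x + N_i\alpha) \geq \setA_{[N_i]} f(x)$ on $M$ for every $i \in I_d$ with $i \geq i_0$. For any bounded measurable $\psi \geq 0$ with essential support in $M$, integrating yields
\begin{equation*}
H(N_i\alpha) \;:=\; \int \psi(x)\, g_d(x + N_i\alpha)\, dx \;\geq\; \int H - o(1), \qquad i \in I_d,\ i \to \infty,
\end{equation*}
where $H(t) := \int \psi(x) g_d(x + t) dx$ is continuous on $\setT$, with $\int H = \int \psi \cdot \int g_d$ and Fourier coefficients $\hat H(k) = \hat g_d(k) \, \overline{\hat \psi(k)}$. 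Since $g_d$ is non-constant, $\hat g_d(k_0) \neq 0$ for some $k_0 \neq 0$; I choose $\psi$ (for instance $\psi(x) = \setone_K(x)(2 - \cos(2\pi k_0 x))$ for a compact $K \subseteq M$ with $\mu(K) > 0$) so that $\hat\psi(k_0) \neq 0$, making $H$ continuous and non-constant. Hence $U := \{t \in \setT : H(t) < \int H - \epsilon\}$ is a non-empty open set for some $\epsilon > 0$, and the displayed inequality forces $(N_i \alpha)_{i \in I_d}$ to eventually avoid $U$.

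To complete the argument I take
\begin{equation*}
E \;:=\; \bigl\{\alpha \in \setT : \text{for every } d \in \{1,\dots,K\} \text{ with } I_d \text{ infinite}, \ (N_i \alpha)_{i \in I_d} \text{ is not dense in } \setT\bigr\}.
\end{equation*}
If $\alpha \notin E$, some subsequence $(N_i \alpha)_{i \in I_d}$ is dense in $\setT$ and hence visits the non-empty open set $U$ infinitely often, in contradiction with the eventual avoidance. Thus for every $\alpha \in \setT \setminus E$ and every non-constant $f \in L^1$, a.e.\ non-monotonicity holds.

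The main obstacle is establishing that $E$ is at most countable. The natural route is to write $E \subseteq \bigcup_{d, J} E_{d, J}$ where $J$ ranges over the countable family of open arcs in $\setT$ with rational endpoints and $E_{d, J} := \{\alpha : (N_i \alpha)_{i \in I_d} \text{ eventually avoids } J\}$. Combining syndeticity of $(N_i)$ (which forces $(N_i)_{i \in I_d}$ to have positive lower density in $\setN$), Weyl's equidistribution theorem applied to the scaled sequences $(k N_i)_{i \in I_d}$ for $k \in \setZ \setminus \{0\}$, and a careful analysis of the linear Diophantine constraints that an eventual avoidance of $J$ imposes on $\alpha$, one reduces each $E_{d, J}$ to a countable collection of ``resonant'' $\alpha$; this is the delicate step where the specific arithmetic structure of syndetic sequences has to be exploited.
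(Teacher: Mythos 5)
Your contradiction machinery (the telescoping identity, the non-constancy of $g_d$ for irrational $\alpha$, the weighted integral $H(t)=\int\psi(x)g_d(x+t)\,dx$ together with $L^1$-mean convergence of the ergodic averages, and the "density forces visits to $U$" conclusion) is sound, and it is close in spirit to the paper's proof. But the theorem stands or falls with the countability of $E$, and that is exactly where there is a genuine gap. Your claim that syndeticity of $(N_i)$ forces the value set $\{N_i: i\in I_d\}$ to have positive lower density is false: the index set $I_d$ is only known to be infinite and can be extremely sparse. For instance (essentially the example in the paper's remark following its proof), take ever longer blocks of $2$-gaps alternating with ever longer blocks of $3$-gaps; then each gap class has zero lower density and only positive \emph{upper} density. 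The density lemma you would need (Rosenblatt--Boshernitzan: positive lower density implies $(N_i\alpha)$ is dense mod $1$ for all but countably many $\alpha$) genuinely fails for sequences of merely positive upper density -- the paper cites a counterexample of Boshernitzan -- so the proposed route via Weyl equidistribution of $(kN_i)_{i\in I_d}$ plus "Diophantine analysis" cannot work as stated; that last step is not a technicality, it is the theorem, and it is unproven. Note also that what you actually need is weaker but still not obvious: that for all but countably many $\alpha$ there is at least one $d$ with $(N_i\alpha)_{i\in I_d}$ dense; nothing in your argument establishes this. (A lesser point: your specific choice $\psi=\setone_K(x)(2-\cos(2\pi k_0x))$ is not guaranteed to have $\hat\psi(k_0)\neq 0$, though this is easily repaired by allowing a second trial weight.)

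The paper's way around the obstacle is worth internalizing, because it is precisely designed for the failure mode above. It splits the sequence into gap classes $S_j$, $j\in[M]$, but instead of demanding density along a single class it considers, for \emph{every} tuple of shifts $\mathsf{k}=(k_1,\dots,k_M)\in\setN^M$, the union $S^{\mathsf{k}}=\bigcup_{j\in[M]}(S_j+k_j)$. This union inherits bounded gaps from syndeticity, hence positive lower density, so the Rosenblatt--Boshernitzan lemma applies to it and the exceptional set $E^{\mathsf{k}}$ is countable; $E=\bigcup_{\mathsf{k}}E^{\mathsf{k}}$ is a countable union of countable sets and is independent of $f$, as the statement requires. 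The price is that one no longer controls from which gap class the good return times come: for a given non-constant $f$ one chooses $\delta>0$ so that all level sets $B^{+}_{j,0}=\{g_j>\delta\}$ have positive measure, uses ergodicity of the rotation to pick shifts $k_j$ making $B=\bigcap_{j\in[M]}(k_j\alpha+B^{+}_{j,0})$ of positive measure, and then density of $(s\alpha)_{s\in S^{\mathsf{k}}}$ plus the covering lemma ($\lambda(\bigcup_n(x_n+B))=1$ for dense $(x_n)$) yields, for a.e. $x$, infinitely many return times in some class $j$ with $g_j(x+s_j\alpha)>\delta$, hence the fluctuation of the bracket. If you want to salvage your write-up, replace your definition of $E$ by this shifted-union construction; your weighted-Fourier contradiction can be retained in place of the paper's pointwise recurrence step, but the single-gap-class density claim must be removed.
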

The next topic of our discussion will be the fluctuation of convolution operators.
\begin{defn}
An \emph{approximate identity} is a sequence $(\phi_n)\in L^1(\setR)$ such that for all $f\in L^1(\setR)$, we have $\lim_{n\to \infty}\|\phi_n\star f-f\|_1=0$. A sequence $(\phi_n)$ is \emph{normalized} if $\|\phi_n\|_1=1$ for all $n\geq 1$. A sequence is \emph{eventually local} if the functions all have bounded support and for all $\delta>0$, there exists $N\geq 1$, such that for all $n\geq N$, we have $\phi_n(x)=0$ a.e. on\linebreak $\setR\setminus[-\delta,\delta].$ A \emph{proper} approximate identity is a normalized, eventually local, approximate identity consisting of positive functions.
\end{defn}

Suppose that $(\phi_n)$ is a proper approximate identity on $L^1(\mathbb
R)$. While the convolutions $\phi_n\star f$ converge in $L^1$-norm to
$f$ for all $f \in L^1(\mathbb R)$, these convolutions may or may
not converge almost everywhere.  This is a topic for which there
is a great deal of literature of all types; see \cite{Ros-Convolution} for
some background and theorems about these issues.

In this paper, we are more
concerned with what could be considered the other side of the issue,
the rate of this convergence. Let $CB(\mathbb R)$
denote the continuous, bounded functions on $\mathbb R$ in 
the uniform norm topology. The main result concerning this topic is the following:
\begin{thm}\label{thm:bothwaysae} 
Let $B=L^p(\mathbb R)$ for some fixed $p, 1 \le p \le \infty$, or $B=CB(\mathbb R)$.  Suppose that $(\phi_n)$ is a proper approximate identity and
$(\epsilon_n)$ is a 
sequence of positive numbers with $\lim\limits_{n \to \infty}
\epsilon_n = 0$.
Then there is a dense $G_{\delta}$ subset $\mathcal A$ of $B$ such that
for all $f \in
\mathcal A$, we have both $\limsup\limits_{n \to \infty} \frac {\phi_n
\star f(x)
-f(x)}{\epsilon_n} = \infty$ a.e. and  $\liminf\limits_{n \to
\infty} \frac
{\phi_n \star f(x) - f(x)}{\epsilon_n} = -\infty$ a.e.
\end{thm}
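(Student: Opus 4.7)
The plan is a Baire category argument in $B$. Setting $h_n(f) := (\phi_n\star f - f)/\epsilon_n$, I would write the desired set as $\mathcal{A} = \mathcal{A}_+ \cap \mathcal{A}_-$, where
\[
\mathcal{A}_+ := \{f\in B : \limsup_n h_n(f) = +\infty \text{ a.e.}\} = \bigcap_{R,M,K,j\in\setN}\;\bigcup_{L>K} O^+_{R,M,K,L,j},
\]
with $O^+_{R,M,K,L,j} := \{f \in B : \lambda([-R,R]\cap \{\max_{K<n\le L} h_n(f) > M\}) > 2R - 1/j\}$, and a symmetric formula for $\mathcal{A}_-$. This exhibits $\mathcal{A}$ as a $G_\delta$. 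Each $O^+_{R,M,K,L,j}$ is open in $B$: Young's inequality $\|\phi_n\star f\|_B \le \|\phi_n\|_1\|f\|_B = \|f\|_B$ makes the finite maximum $f\mapsto \max_{K<n\le L} h_n(f)$ continuous from $B$ into the appropriate space (hence into convergence in measure on $[-R,R]$), and super-level-set measures are lower semicontinuous under this topology via a standard $M-\varepsilon$ cut-off.

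The heart of the proof is density of each union $\bigcup_L O^+_{R,M,K,L,j}$. Given $g\in B$ and $\eta>0$, first approximate $g$ by some $g_0$ that is continuous on $[-R-1,R+1]$ with $\|g-g_0\|_B<\eta/2$. Uniform continuity of $g_0$ on this interval yields a modulus $\omega_{g_0}(\delta)\to 0$ as $\delta \to 0$, and hence $|h_n(g_0)(x)|\le \omega_{g_0}(\delta_n)/\epsilon_n$ for $x\in[-R,R]$, where $\phi_n$ is supported in $[-\delta_n,\delta_n]$. Next, build a perturbation $u$ supported in $[-R,R]$ as follows: for $n>K$ to be chosen later, let $v$ be an \emph{asymmetric} square wave on $[-R,R]$ at period $\tau \ll \delta_n$, taking value $a>0$ on a subset of density $1/j$ and value $-a/(j-1)$ on density $(j-1)/j$ within each period (so $\int v = 0$). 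Because $\phi_n$ averages $v$ over many full periods of oscillation, $\phi_n\star v \approx 0$ on the interior of $[-R,R]$, whence $\phi_n\star v - v \approx a/(j-1) > 0$ on a subset of density $(j-1)/j$. Scale $u:=cv$ so that $\|u\|_B < \eta/2$ (a direct calculation, with $v$ replaced by a smooth version when $B = CB$), and choose $n$ large enough that $ca/(j-1) > M\epsilon_n + \omega_{g_0}(\delta_n)$. Then on the density-$(j-1)/j$ subset of $[-R,R]$,
\[
h_n(g_0+u)(x) \ge h_n(u)(x) - |h_n(g_0)(x)| > M,
\]
so $f:=g_0+u$ lies in $O^+_{R,M,K,L,j}$ for every $L\ge n$.

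The analogous construction with $u$ replaced by $-u$ gives density of the open sets defining $\mathcal{A}_-$, and Baire's theorem produces $\mathcal{A}$ as a dense $G_\delta$. The main obstacle is designing the asymmetric wave $v$ so that three competing requirements---small $B$-norm of $cv$, pointwise $h_n(v)$ of order $1/\epsilon_n$, and positivity on a set of density close to $1$ in $[-R,R]$---are met simultaneously despite the mean-zero constraint $\int v = 0$ forced by $\|\phi_n\|_1 = 1$; the asymmetric amplitudes (large $a$ on a small-density set, small $a/(j-1)$ on a large-density set) are precisely what reconcile these demands. The other delicate point---that $h_n(g_0)$ may itself blow up as $n\to\infty$ when $\epsilon_n$ decays very fast---is absorbed by postponing the choice of $n$ until after $v$, $c$, and $g_0$ are fixed and then exploiting the uniform continuity of $g_0$ on $[-R-1,R+1]$, so that $\omega_{g_0}(\delta_n)$ can be made arbitrarily small while $ca/(j-1)$ stays bounded below.
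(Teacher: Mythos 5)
Your proposal is essentially correct for $B=L^p$, $1\le p<\infty$, and $B=CB(\setR)$, but it proceeds quite differently from the paper. The paper's proof builds, for each $K,\epsilon$ and interval $I$, a single norm-one function $f=\sum_{l\in[L]}a_l r_{m_l}$ out of finitely many Rademacher functions, with an inductive choice of amplitudes $a_l$, convolution indices $n_l$ and frequencies $m_l$ so that the one-sided ratio exceeds $K$ somewhere along the sequence on a set of measure $\ge \lambda(I)-\epsilon$ (the union $\bigcup_l E_{m_l}$ covering $I$ up to $2^{-L}$, cross-terms killed by making later amplitudes tiny and earlier Rademachers nearly reproduced by $\phi_{n_j}$); it then feeds this into the del Junco--Rosenblatt-type category principle already used for Theorem~\ref{thm:divergence}. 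You instead write $\mathcal A$ explicitly as a countable intersection of open sets, prove openness by Young plus a Chebyshev cut-off, and prove density by perturbing an arbitrary (regularized) $g$ with one mean-zero asymmetric fine-scale wave $cv$: the asymmetry replaces the paper's stacking of $L$ independent Rademacher levels (one wave already gives positivity on density $1-1/j$), and the key quantitative point --- that the perturbation contributes $\sim ca/((j-1)\epsilon_n)$ while the regularized $g_0$ contributes only $\omega_{g_0}(\delta_n)/\epsilon_n$, so choosing $n$ last wins because $M\epsilon_n+\omega_{g_0}(\delta_n)\to0$ --- is a clean substitute for the paper's inductive bookkeeping and for the external Baire principle. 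What the paper's route buys is uniformity across all the spaces (Rademacher sums are bounded, so one normalization works everywhere) and reusability of the general principle; your route is more self-contained and makes the ``add to an arbitrary $g$'' step explicit.

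Two repairs are needed. First, for $p=\infty$ your opening move fails: continuous functions are not dense in $L^\infty(\setR)$, so you cannot take $\|g-g_0\|_\infty<\eta/2$ with $g_0$ continuous. The fix stays inside your framework: perturb $g$ itself and replace the modulus-of-continuity bound by the observation that $g\setone_{[-R-1,R+1]}\in L^1$, hence $\phi_n\star g-g\to0$ in $L^1([-R,R])$ and so in measure there; choose $n$ so large that $|\phi_n\star g-g|\le \tfrac12 ca/(j-1)$ off a set of measure $<1/(2j)$ and $M\epsilon_n$ is small, at the cost of shrinking the good set by an arbitrarily small amount. Second, the period of $v$ cannot be fixed merely by $\tau\ll\delta_n$ with $n$ ``to be chosen later'': $\phi_n$ may be concentrated on a scale far finer than its support, in which case $\phi_n\star v\not\approx0$; and as written $\tau$ depends on an $n$ you have not yet chosen. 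The correct order is: fix the amplitudes, densities and $c$ (these do not depend on $\tau$), choose $n$, and only then choose $\tau$ so small that $\|\phi_n\star v\|_\infty$ is below your slack on the interior of $[-R,R]$ (this holds as $\tau\to0$ for the fixed $L^1$ function $\phi_n$, by approximating $\phi_n$ in $L^1$ by a continuous kernel). Finally, a bookkeeping slip: your good set has measure about $2R(1-1/j)$, not $>2R-1/j$; either define $O^+_{R,M,K,L,j}$ with threshold $2R(1-1/j)$ or take the wave's density parameter to depend on $R$ and $j$. None of these affects the soundness of the overall scheme.
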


The final topic of this paper is the fluctuation of convergent martingales. The classical dyadic martingale is probably the best example 
to consider because of its connections to Lebesgue differentiation and, through that,
to dynamical systems.

The dyadic martingale is constructed as follows.  Take $\mathcal D_n$ 
to be the dyadic $\sigma$-algebra whose atoms are the intervals $D_{n,j} =
[\frac j{2^n},\frac {j+1}{2^n}], j = 0,\dots,2^n-1$. The finite $\sigma$-algebras 
$\mathcal D_n$ are increasing, with the measure-theoretic 
completion of $\bigcup\limits_{n\in \setN} \mathcal D_n$
being the Lebesgue $\sigma$-algebra in $[0,1]$.   

Now for $G\in L^1[0,1]$, as $n\to \infty$, we have the conditional expectations $E(G|\mathcal D_n)$ converge
to $G$ in $L^1$-norm and a.e. with respect to Lebesgue measure $\lambda$.  We claim
that this convergence is generically fluctuating a.e.  
\medskip
\begin{thm}\label{thm:osc} 
For a dense $G_\delta$ set $\mathcal O$ of
functions in $L^1[0,1]$, if $G\in \mathcal O$, for a.e.   $x\in [0,1]$, $E(G|\mathcal D_n)(x) > G(x)$ for infinitely
many $n\ge 1$, and $E(G|\mathcal D_n)(x) < G(x)$ for infinitely many (other) $n\ge 1$.  
\end{thm}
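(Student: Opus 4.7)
The plan is to exhibit an explicit dense $G_\delta$ subset $\mathcal O\subset L^1[0,1]$ of ``good'' functions for which the claimed two-sided fluctuation holds. By the symmetry $G\mapsto -G$ it suffices to produce dense $G_\delta$ sets $\mathcal O^+$ and $\mathcal O^-$ of functions satisfying, respectively, $E(G|\mathcal D_n)(x)>G(x)$ and $E(G|\mathcal D_n)(x)<G(x)$ for infinitely many $n$ at a.e.\ $x$; the intersection is the required $\mathcal O$.

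For the $G_\delta$ structure of $\mathcal O^+$, I would set, for $N,k,M\in\setN$ with $M\geq N$,
\[
\mathcal U_{N,k,M}=\Bigl\{G\in L^1:\lambda\bigl(\bigl\{x:\max_{N\leq n\leq M}(E(G|\mathcal D_n)(x)-G(x))>1/k\bigr\}\bigr)>1-1/k\Bigr\},
\]
and $\mathcal U_{N,k}=\bigcup_{M\geq N}\mathcal U_{N,k,M}$. First I would verify each $\mathcal U_{N,k,M}$ is open in $L^1$: the map $G\mapsto\phi_G:=\max_{N\leq n\leq M}(E(G|\mathcal D_n)-G)$ is continuous from $L^1$ into itself, and if $\lambda(\{\phi_G>1/k\})>1-1/k$, right-continuity of the distribution function furnishes some $\epsilon_0>0$ with $\lambda(\{\phi_G>1/k+\epsilon_0\})$ still strictly greater than $1-1/k$, so Markov's inequality applied to $\phi_G-\phi_{G'}$ lets $L^1$-small perturbations be absorbed. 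A short unraveling gives $\bigcap_{N,k}\mathcal U_{N,k}\subseteq\mathcal O^+$: sending $k\to\infty$ at fixed $N$ yields $\lambda(\{\sup_{n\geq N}(E(G|\mathcal D_n)-G)>0\})=1$, and intersecting over $N$ delivers the infinitely-often condition.

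The core of the argument is density. Given $G\in L^1$ and $\eta>0$, I want $G'$ within $\eta$ of $G$ in $L^1$ that lies in $\mathcal O^+\cap\mathcal O^-$. First approximate $G$ by a dyadic step function $\tilde G\in\mathcal D_L$ with $\|G-\tilde G\|_1<\eta/2$, so that $E(\tilde G|\mathcal D_n)-\tilde G=0$ for all $n\geq L$. Next, with $r_m\in\mathcal D_m$ denoting the $m$-th Rademacher function (so the $(r_m)$ are i.i.d.\ $\pm 1$ under $\lambda$ and $E(r_m|\mathcal D_n)=0$ for $n<m$), put
\[
G'=\tilde G+c\sum_{j=1}^{\infty}\frac{r_{L+j}}{j},
\]
with $c>0$ chosen small enough that $\|G'-\tilde G\|_1\leq\|G'-\tilde G\|_2=c\pi/\sqrt{6}<\eta/2$. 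A direct computation then yields, for every $n\geq L$,
\[
E(G'|\mathcal D_n)(x)-G'(x)=-c\sum_{j>n-L}\frac{r_{L+j}(x)}{j},
\]
so verifying $G'\in\mathcal O^+\cap\mathcal O^-$ reduces to showing that the tail Rademacher sum $Z_m(x):=\sum_{j\geq m}r_{L+j}(x)/j$ satisfies $Z_m(x)>0$ and $Z_m(x)<0$ each for infinitely many $m$, at a.e.\ $x$.

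This sign-change step will be the main obstacle, but I would handle it via Kolmogorov's 0-1 law. Both events $\{Z_m\geq 0\text{ eventually}\}$ and $\{Z_m\leq 0\text{ eventually}\}$ lie in the tail $\sigma$-algebra of the independent sequence $(r_{L+j})_{j\geq 1}$, so each has measure $0$ or $1$, and they have equal measure by the distributional symmetry $r_j\mapsto -r_j$. If both equalled $1$, then $Z_m\equiv 0$ would hold eventually, contradicting $Z_m-Z_{m+1}=r_{L+m}/m\neq 0$. Hence both have measure $0$, forcing $Z_m>0$ and $Z_m<0$ each to hold infinitely often a.e. This places $G'$ in $\mathcal O^+\cap\mathcal O^-$ and completes the density step.
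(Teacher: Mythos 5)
There is a genuine gap, and it is in the Baire-category skeleton rather than in the perturbation construction. Your $G_\delta$ set $\bigcap_{N,k}\mathcal U_{N,k}$ is \emph{empty}. Membership of $G$ in $\bigcap_{N\ge 1}\mathcal U_{N,k}$ means that $\lambda\bigl\{x:\sup_{n\ge N}\bigl(E(G|\mathcal D_n)(x)-G(x)\bigr)>1/k\bigr\}>1-1/k$ for every $N$; but by the martingale convergence theorem $E(G|\mathcal D_n)\to G$ a.e.\ for \emph{every} $G\in L^1[0,1]$, so these sets decrease (as $N\to\infty$) to a subset of $\{x:\limsup_n(E(G|\mathcal D_n)(x)-G(x))\ge 1/k\}$, which is null; hence their measure tends to $0$ and cannot stay above $1-1/k\ge 1/2$ for all $N$ once $k\ge 2$. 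So the containment $\bigcap_{N,k}\mathcal U_{N,k}\subseteq\mathcal O^+$ is vacuously true and cannot yield a dense $G_\delta$. The same defect shows up in the density step: you prove that the qualitative set $\mathcal O^+\cap\mathcal O^-$ is dense, but never that each open set $\mathcal U_{N,k}$ is dense, and in fact your $G'$ typically is \emph{not} in $\mathcal U_{N,k}$ for large $N$, since its excess is $-cZ_{n-L+1}$ with $Z_m\to 0$ a.e., so it eventually fails the fixed threshold $1/k$. A dense subset of the target set with no $G_\delta$ structure attached does not prove the theorem.

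The repair is to keep the quantitative threshold out of the definition of the open sets, which is exactly what the paper does: take $\mathcal O_{l,N_0}=\bigl\{G:\lambda\{x: E(G|\mathcal D_n)(x)>G(x)\text{ for some }n\ge N_0\}>1-\tfrac1l\bigr\}$. Openness of these sets is then the only delicate point, and it is handled as in the proof of \Cref{thm:generic fluctuation}: for a given member $G$ one first truncates to finitely many $n\in[N_0,L_0]$ and then extracts a level $\delta_0>0$ \emph{depending on} $G$ before applying the Markov-inequality perturbation bound (your own openness computation contains all these ingredients; the point is that $\delta_0$ must not be the uniform $1/k$ built into the definition). With that change, your density construction is not only correct but a nice alternative to the paper's: the paper perturbs by a finite Rademacher sum $\delta\Sigma_N$ and argues via fluctuation of the time-reversed random walk up to an exceptional set of measure $\eta$, whereas your infinite series $c\sum_j r_{L+j}/j$ together with the Kolmogorov $0$--$1$ law gives a single $G'$ whose excess $-cZ_m$ changes sign infinitely often a.e., hence lies in \emph{every} $\mathcal O_{l,N_0}$ (and in every reflected set) simultaneously, which immediately gives density of each of the paper's open sets.
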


 \begin{rem}
 It is clear that we have to consider the fluctuation of this strong type on 
only a subset $\mathcal O$ of $L^1[0,1]$.  Indeed, if $G$ were measurable
with respect to $\mathcal D_m$, then $E(G|\mathcal D_n) = G$ for all $n\ge m$.
So for such functions, there is no fluctuation.  If we vary $m$, then such functions are dense
in $L^1[0,1]$ in $L^1$-norm.
    
 \end{rem}

\section{Preliminaries}\label{Preliminaries}
\subsection{Notation} Let $(X,\mathcal{B},\mu,T)$ be a measure-preserving system and $f\in L^1(X)$. For a finite set of $S$ of positive integers and a sequence $(a_n)$ of positive integers, we use the following averaging notation.
\begin{equation}\label{notation}
\setA_{S}f(T^{a_n} x):= \dfrac{1}{|\# S|}\sum_{n\in S}f(T^{a_n} x).
\end{equation} 
Intervals in the averaging notation mean intervals of integers.\\
In particular,
\begin{equation}\label{notation1}
\setA_{[N]}f(T^{n} x):= \dfrac{f(Tx)+f(T^2 x)+\dots+f(T^N x)}{N}.
\end{equation}
When we are concerned with the function space and not their functional value, we will use $\setA_{S}^T$ where $\setA_{S}^T:X\to X$ is defined by $\setA_{S}^Tf(x)=\setA_{S}f(T^n x).$

$\setT$ stands for the one-dimensional torus, that is, $\setR/\setZ$, and $\lambda$ is reserved for denoting the Lebesgue measure.

\vspace{.5cm}

Sometimes we assume that $T$ is totally ergodic to avoid the following obstacle regarding fluctuation. Suppose that $T$ is not totally ergodic.  Then there is a non-zero $f$ such that $f\circ T = \zeta f$ with $\zeta $ a root of unity.  Say $\zeta^N =1$.  Then $\sum\limits_{n\in [N]} \zeta^n =  0$.  So the same thing occurs with any multiple of $N$, and hence, for example, $\setA_{[N]}f(T^n x) = \setA_{[2N]}f(T^n x) = 0$ for almost every $x$. On the other hand, if we assume total ergodicity, then two averages cannot be equal.
\begin{prop} \label{tot}   
If $T$ is ergodic and for (almost) every $x\in X$, $\setA_{[M]}f(T^n x) =$  $ \setA_{[N]}f(T^n x)$ for some $M > N$ and a non-zero $f\in L^1$, then $T$ is not totally ergodic.
\end{prop}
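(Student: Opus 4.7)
My approach is to encode the hypothesis as a polynomial equation in the Koopman operator $Uf := f\circ T$, reduce it to $(U^d-I)^{2}f = 0$ with $d = \gcd(M,N)$, and then observe that this forces $f - f\circ T^d$ to be $T^d$-invariant. Under total ergodicity of $T$ the operator $T^d$ is ergodic, and that invariance forces $f$ to be constant; the contrapositive is the proposition. (The substantive content is for non-constant $f$, since a nonzero constant satisfies the averages-equality hypothesis trivially with no implication for $T$.)

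From $\setA_{[M]} f = \setA_{[N]} f$ a.e.\ I first obtain $P(U)f = 0$ in $L^{1}$, where
$P(z) = N\sum_{n=1}^{M} z^n - M\sum_{n=1}^{N} z^n$. A short manipulation gives the algebraic identity $(z-1)P(z) = zQ(z)$ with $Q(z) := Nz^M - Mz^N + (M-N)$; since $U$ is invertible on $L^{1}$ (because $T$ is an invertible measure-preserving transformation), this yields $Q(U)f = 0$ after applying $(U-I)U^{-1}$. The key algebraic input is then the factorization
\[
    Q(z) \;=\; N\,(z^d-1)^{2}\,S(z),
\]
where $d = \gcd(M,N)$ and every root of $S$ lies off the unit circle. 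The double factor $(z^d-1)^2$ is forced by the calculation that any $d$-th root of unity $\zeta$ satisfies $\zeta^M = \zeta^N = \zeta^{M-N} = 1$, hence $Q(\zeta) = Q'(\zeta) = 0$ while $Q''(\zeta) = MN(M-N)\zeta^{N-2}\ne 0$. That no other point on the unit circle is a root follows from a short trigonometric computation: splitting $Q(e^{i\theta})=0$ into real and imaginary parts and invoking the Pythagorean identity excludes the case $\sin(N\theta/2)\ne 0$. Because $U$ is an invertible isometry of $L^{1}$, each factor $U - \xi I$ with $|\xi|\ne 1$ is invertible by a Neumann series (using $U^{-1}$ when $|\xi|<1$); hence $S(U)$ is invertible on $L^1$, and $Q(U)f=0$ collapses to $(U^d - I)^{2}f = 0$.

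Put $h := f\circ T^d - f \in L^{1}$. The previous identity reads $h\circ T^d = h$, so $h$ is $T^d$-invariant. Under the hypothesis that $T$ is totally ergodic, $T^d$ is ergodic, so $h$ is a.e.\ constant; and since $\int h\,d\mu = 0$ we must have $h\equiv 0$, whence $f = f\circ T^d$, and a second appeal to the ergodicity of $T^d$ forces $f$ to be constant. The contrapositive is the proposition. I expect the main obstacle to be the factorization step---verifying both that every $d$-th root of unity is a double root of $Q$ and that no other unit-circle point is a root---together with the mild subtlety that the $L^{1}$-invertibility of $S(U)$, when $S$ has roots inside the unit disk, genuinely requires the invertibility of $T$ and not merely measure-preservation.
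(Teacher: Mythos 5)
Your argument is correct (granting the non-constancy caveat you rightly flag: for a non-zero constant $f$ the hypothesis holds for any $T$, so both your proof and the paper's implicitly treat ``non-zero'' as ``non-constant''), but it takes a genuinely different route from the paper. The paper stays inside $L^1$: from $\setA_{[M]}^Tf=\setA_{[N]}^Tf$ it deduces that $f\circ T^M$ lies in the span $\mathcal S$ of $f\circ T,\dots,f\circ T^{M-1}$, so $\mathcal S$ is a finite-dimensional $T$-invariant subspace; ergodicity then yields an eigenfunction $F\in\mathcal S$ with $F\circ T=\zeta F$, $|\zeta|=1$, $\zeta\ne 1$, and since $F$ inherits the same averages identity, a geometric-series computation gives $\frac1M(\zeta^M-1)=\frac1N(\zeta^N-1)$, after which the equality case of the triangle inequality ($M-N=|M-N\zeta^{M-N}|$) forces $\zeta^{M-N}=1$. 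You instead work with the Koopman operator throughout, factor $Q(z)=Nz^M-Mz^N+(M-N)$ as $N(z^d-1)^2S(z)$ with $d=\gcd(M,N)$, invert $S(U)$ by Neumann series (correctly noting this is where invertibility of $T$ enters when $S$ has roots inside the disk), and reduce to $(U^d-I)^2f=0$, whence $f\circ T^d=f$ and $f$ is constant if $T^d$ is ergodic. All the key claims check out: each $d$-th root of unity is a root of $Q$ of multiplicity exactly two (your $Q''$ computation is right), and there are no other unit-circle roots --- though your trigonometric sketch for the latter is vague, the cleanest verification is precisely the paper's scalar step transplanted to the polynomial: $Q(z)=0$ with $|z|=1$ gives $M=|Nz^M+(M-N)|$, and equality in the triangle inequality forces $z^M=1$, hence $z^N=1$, hence $z^d=1$. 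So the two proofs share the same arithmetic core, deployed at different levels (eigenvalue of an eigenfunction versus unit-circle spectrum of a polynomial). What each buys: the paper's proof is shorter, avoids the factorization and multiplicity bookkeeping, needs no bound on $U^{-1}$, and is more explicit about the failure of total ergodicity (it exhibits an eigenfunction whose eigenvalue is an $(M-N)$-th root of unity); yours gives the sharper structural statement that under total ergodicity the identity forces $f\circ T^d=f$ with $d=\gcd(M,N)$, hence $f$ constant, at the cost of the polynomial factorization and the $L^1$-invertibility argument for $S(U)$.
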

\begin{proof}  Since $\setA_{[M]}^Tf = \setA_{[N]}^Tf$, we have \[f\circ T^M = \frac MN \sum\limits_{[N]} f\circ T^n - \sum\limits_{[M-1]} f\circ T^n.\]
Let $\mathcal S$ be the linear span of $\{f\circ T,\dots,f\circ T^{M-1}\}$.  We have $f\circ T^M \in \mathcal S$.  That is, $\mathcal S \circ T\subset \mathcal S$.  So, because $\mathcal S$ is finite-dimensional, $\mathcal S$ is invariant under $T$. Since $T$ is ergodic, it follows that there is a non-zero $F\in \mathcal S$ and $\zeta \in \mathbb T, \zeta \not=1,$ such that $F\circ T = \zeta F$.  We claim that in fact $\zeta$ is a root of unity and so $T$ is not totally ergodic.  To see this, we take $\displaystyle F = \sum\limits_{m\in[M-1]} c_m f\circ T^m$ for some $c_1,\dots, c_{M-1} \in \mathbb C$.  So we have \begin{align*}
\setA_{[M]}^TF = \sum\limits_{m\in[M-1]} c_m &\setA_{[M]}^T (f \circ T^m)=\sum\limits_{m\in[M-1]} c_m \setA_{[M]}^T f \circ T^m \\ &= \sum\limits_{m\in[M-1]} c_m \setA_{[N]}^T f \circ T^m=  \sum\limits_{m\in[M-1]} c_m \setA_{[N]}^T (f \circ T^m)= \setA_{[N]}^TF.
\end{align*}
Because $F$ is not zero and $F\circ T = \zeta F$, we have $\displaystyle \frac 1M \sum\limits_{m\in[M]} \zeta^m = \frac 1N\sum\limits_{m\in[N]} \zeta^m$.  Using the geometric sum  formula, it follows that $\displaystyle \frac 1M (\zeta^M - 1) = \frac 1N (\zeta^N -1)$.  That is, $M-N = M\zeta^N - N\zeta^M = (M - N \zeta^{M-N})\zeta^N$.  Since $|\zeta| = 1$, it follows that $M -N= |M - N \zeta^{M-N}|$.  That is, in $\mathbb C$ the distance from $N$ to $M$ is the distance of $N\zeta^{M-N}$ to $M$.  But this can only happen if $\zeta^{M-N} =1$.  That is, $\zeta$ is a root of unity.
\end{proof}

\begin{cor}\label{cor:nonconstant} 
If $T$ is totally ergodic, and $f$ is a non-constant $L^1$ function, then $\setA_{[M]}^Tf$ is also non-constant for all $M\geq 1$.
\end{cor}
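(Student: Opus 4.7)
The plan is to argue by contradiction and reduce directly to Proposition~\ref{tot}. Suppose, for some $M\ge 1$ and some non-constant $f\in L^1$, that $\setA_{[M]}^T f$ is constant a.e. Since $T$ is (totally) ergodic, integrating shows this constant must be $\int f =: c$.

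Next, I would show that the same constant value is also taken by the average over $[2M]$: splitting the sum into two blocks of length $M$,
\[
\setA_{[2M]}^T f(x) \;=\; \frac{1}{2}\bigl(\setA_{[M]}^T f(x) + \setA_{[M]}^T f(T^{M}x)\bigr) \;=\; \frac{1}{2}(c+c) \;=\; c.
\]
Hence $\setA_{[2M]}^T f(x) = \setA_{[M]}^T f(x)$ for a.e.\ $x\in X$.

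Now I would apply Proposition~\ref{tot} with the pair $(2M, M)$ in place of $(M, N)$. Since $f$ is non-constant it is in particular not the zero function of $L^1$, and $T$ is assumed totally ergodic (hence ergodic), so the hypotheses are satisfied. The proposition then forces $T$ to fail to be totally ergodic, a contradiction. Therefore no such $M$ exists, and $\setA_{[M]}^T f$ is non-constant for every $M\ge 1$.

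The reasoning is essentially a one-line reduction to Proposition~\ref{tot}, so there is no genuine obstacle; the only point that needs care is producing two distinct averaging lengths on which $\setA^T f$ agrees, and the doubling identity above supplies them. One could alternatively note that a constant $\setA_{[M]}^T f$ immediately implies $\setA_{[kM]}^T f$ is constant for every $k\ge 1$, giving a whole family of equalities to feed into Proposition~\ref{tot}; the single doubling $M\mapsto 2M$ suffices, however.
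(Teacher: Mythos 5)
Your proof is correct and follows essentially the same route as the paper: assume $\setA_{[M]}^T f$ is constant, use the doubling identity $\setA_{[2M]}^T f(x)=\tfrac12\bigl(\setA_{[M]}^T f(x)+\setA_{[M]}^T f(T^M x)\bigr)$ to get $\setA_{[2M]}^T f=\setA_{[M]}^T f$ a.e., and invoke Proposition~\ref{tot} to contradict total ergodicity. No issues.
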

\begin{proof}
Suppose, to the contrary, there is a constant $c$ such that $\setA_{[M]}^Tf(x)=c$ for almost every $x\in X$. Applying this equality on $T^M x$, we get that $\setA_{[M]}^Tf(T^M x)=c$ for almost every $x$. Adding these two equalities and dividing both sides by $2$, we have $\setA_{[2M]}^T f(x)=c$ for almost every $x$. Thus we have $\setA_{[M]}^Tf(x)=\setA_{[2M]}^T f(x)=c$ for almost every $x$. But this is a contradiction to Proposition \ref{tot}.
\end{proof}

\subsection{Complete recurrence property}\label{strong}
In this subsection, we will introduce the notion of complete recurrence property. This has some consequences related to the non-monotonicity and fluctuation of the subsequence ergodic averages (see \Cref{prop:subsequencefluctuation} and \Cref{thm:6}). It is also a topic of independent interest.

\begin{defn}[Complete recurrence property]
A sequence $(N_i)$ of positive integers is said to satisfy the complete recurrence property for the system $(X,\mathcal{B},\mu, T)$ if every set $E$ of positive measure and every positive integer $L$ satisfy the following condition:
\begin{equation}\label{eq:recurrence}
\displaystyle \mu\Big(\cup_{i\geq L}T^{-N_i}E\Big)=1.
\end{equation}
In particular, if a sequence $(N_i)$ satisfies \eqref{eq:recurrence} for the irrational rotation $(\setT,\Sigma,\lambda,R_\alpha)$, then we say that $(N_i)$ satisfies the complete recurrence property for $\alpha$. We say that a sequence $(N_i)$ satisfies the complete recurrence property if it satisfies the same for \emph{every} ergodic system.
\end{defn}
An equivalent formulation of the complete recurrence property is the following:\\
For every positive integer $L$ and every set $E$ and $F$ of positive measures, there exists $i\geq L$ such that
\begin{equation}\label{eq:equivalent}
\mu\big(E\cap T^{-N_i}F\big)>0.
\end{equation}
 \begin{rem}{}{}
\begin{enumerate}
    \item Recall that a set $R\subset \setN$ is said to be a \emph{set of recurrence} if for any measure-preserving system $(X,\mathcal{B}, \mu, T)$ and for all $A\in \mathcal{B}$ with $\mu(A)>0$ if we have $\mu(A\cap T^{r}A)>0$ for some $r\in R$. This suggests to us the name complete recurrence property. {Sets of recurrence} have been studied extensively in ergodic theory. For a beautiful account on the set of recurrence, the reader is suggested to look at the book of Furstenberg ~\cite{Fur}.
\item While the set of recurrence is defined for any measure-preserving system, to define the complete recurrence property, we must restrict ourselves to ergodic systems.
\end{enumerate}
 \end{rem}
We will give some examples of complete recurrence sequences via the following lemma.
\begin{lem}\label{lem:completerecurrence}
Let $(X,\mathcal{B},\mu,T)$ be an ergodic system and $(N_i)$ be an increasing sequence of positive numbers. Suppose that for every $f\in L^\infty$
\begin{equation}\label{eq:complete}
 \frac{1}{K}\sum_{i\in[K]}f(T^{N_i} x) \to \int f \text{ in $L^2$ norm as } K\to \infty.
\end{equation}

Then $(N_i)$ satisfies the complete recurrence property for $(X,\mathcal{B},\mu,T)$.
\end{lem}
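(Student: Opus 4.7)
The plan is to apply the hypothesis \eqref{eq:complete} to the indicator $f = \setone_E$ and observe that the resulting averages must vanish identically on the complement of the set $A_L := \bigcup_{i \geq L} T^{-N_i} E$. Concretely, I would fix $E \in \mathcal{B}$ with $\mu(E) > 0$ and a positive integer $L$; the goal is to prove $\mu(A_L) = 1$, which is the content of \eqref{eq:recurrence}.

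Applying \eqref{eq:complete} to $f = \setone_E \in L^\infty$ yields $\frac{1}{K}\sum_{i \in [K]} \setone_E(T^{N_i} x) \to \mu(E)$ in $L^2$ as $K \to \infty$. To bring the summation range into a form compatible with $A_L$, I would first replace $[K]$ by $\{L, L+1, \dots, L+K-1\}$: the two averages differ in at most $2L$ terms each bounded by $1$, so their pointwise difference is at most $2L/K$, which tends to $0$ uniformly. Consequently the shifted averages
$$g_K(x) \;:=\; \frac{1}{K}\sum_{i=L}^{L+K-1} \setone_E(T^{N_i} x)$$
also converge to the constant $\mu(E)$ in $L^2$.

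The key observation is that $g_K(x) = 0$ for every $K$ and every $x \in X \setminus A_L$, simply because the condition $x \notin A_L$ means exactly that $T^{N_i} x \notin E$ for all $i \geq L$. Multiplying the $L^2$-convergence of $g_K$ by the bounded function $\setone_{X \setminus A_L}$ therefore gives
$$0 \;=\; \lim_{K \to \infty} g_K \cdot \setone_{X \setminus A_L} \;=\; \mu(E)\cdot \setone_{X \setminus A_L} \quad \text{in } L^2(X),$$
which forces $\mu(E)\,\mu(X \setminus A_L) = 0$. Since $\mu(E) > 0$, we conclude $\mu(A_L) = 1$, establishing the complete recurrence property. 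The argument is essentially mechanical once one specializes the hypothesis to indicator functions, so I do not anticipate any genuine obstacle; the only bookkeeping needed is the shift of the index range from $[K]$ to $\{L,\dots,L+K-1\}$ used to align the averages with $A_L$.
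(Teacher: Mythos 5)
Your proof is correct and follows essentially the same strategy as the paper: specialize the $L^2$ mean convergence hypothesis to the indicator of a positive-measure set and observe that this is incompatible with the averages vanishing on the complement of $\bigcup_{i\ge L}T^{-N_i}E$. The only (cosmetic) difference is that you argue directly, handling the tail condition $i\ge L$ by shifting the summation window and multiplying by $\setone_{X\setminus A_L}$ in $L^2$, whereas the paper argues by contradiction and extracts an a.e.\ convergent subsequence; both steps are routine and the core idea is identical.
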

\begin{proof}
To the contrary, suppose that $(N_i)$ does not satisfy the complete recurrence property for $(X,\mathcal{B},\mu,T)$. Then, by \eqref{eq:equivalent}, there exist $E,F \in \mathcal{B}$ with $\mu(E)\cdot \mu(F)>0$ such that
\begin{equation}\label{eq:2}
\mu\big(E\cap T^{-N_i}F\big)=0 \text{ for every large }i.
\end{equation}
So, there exists a set $\tilde{E}\subset E$ with $\mu(E)=\mu(\tilde{E})>0$ and $\tilde{E}\cap T^{-N_i}F= \emptyset $ for every large $i$.
This implies that for every $x\in \tilde{E}$, $\displaystyle \frac{1}{K}\sum_{i\in[K]}\setone_F (T^{N_i} x)\to 0$. On the other hand, using \eqref{eq:complete}, we can find a subsequence $(K_l)$ such that for almost every $x\in X$, $\displaystyle\frac{1}{K_l}\sum_{i\in[K_l]}\setone_F(T^{N_i} x) \to \mu(F)$ as $l\to \infty$. This is a contradiction because $\mu(\tilde{E}) \cdot \mu(F)>0.$
\end{proof}

A particular class of sequences that satisfies the above hypothesis is the class of \emph{ergodic sequences}.
\begin{defn}(Ergodic sequence)
Let $T$ be an ergodic transformation on the probability space $(X,\mathcal{B},\mu)$. A sequence $(N_i)$ of positive increasing integers is said to be an ergodic sequence for the system $(X,\mathcal{B},\mu,T)$ if for every $f\in L^2$, $$\displaystyle \frac{1}{K}\sum_{i\in[K]}f(T^{N_i} x) \to \int f \text{ in $L^2$ norm as } K\to \infty.$$ If the sequence $(N_i)$ is ergodic for every ergodic system, then it is said to be an ergodic sequence.
\end{defn}
For a detailed discussion on ergodic sequences, one can look at the survey article of Rosenblatt and Wierdl ~\cite[Chapter II]{RW}. Here we will list some examples of ergodic sequences, and hence sequences of complete recurrence by \Cref{lem:completerecurrence}.

\begin{exmp}
 Let  $c>1$ be a non-integer real number and $p_n$ denote the $n$-th prime number.
\begin{enumerate}[(a)]
    \item Then from \cite[ Theorem A]{BKQW}, it follows that $(\lfloor n^c \rfloor)_{n\in \setN}$ is an ergodic sequence.
    \item From \cite[Theorem 3.1]{BKGM}, it follows that $(\lfloor p_n^c \rfloor)_{n\in \setN}$ is an ergodic sequence. 
\end{enumerate}
\end{exmp}
\begin{exmp}
Let $p_i$ denote the $i$-th prime number. It is well known that both $(i^2)$ and $(p_i)$ are ergodic sequences for totally ergodic systems.   
\end{exmp}
The following example shows that the complete recurrence property is indeed a special property.

\begin{exmp}
Let $I=[a,b]\subset \setT$ be an interval with $\lambda(I)<\frac{1}{4}$. Let $\alpha$ be a fixed irrational number. Define $S=\{n\in \setN: I\cap (I-n\alpha)\neq \emptyset\}$. We claim that $S$ is a set of recurrence, but it does not satisfy the complete recurrence property. Let $(X,\mathcal{B},\mu,T)$ be any measure-preserving system. Let $(\Tilde{X},\Tilde{\mathcal{B}},\tilde{\mu},\Tilde{T})$ be the product system of $(X,\mathcal{B}, \mu, T)$ and $(\setT,\beta, \lambda,R_\alpha ).$ For an arbitrary $A\in \mathcal{B}$, let us consider the set $\Tilde{A}=A\times I$. By the Poincaré recurrence theorem, there exists $n\in \setN$ such that $\tilde{\mu}\Big( \Tilde{A}\cap \tilde{T}^{-n}\Tilde{A}\Big)>0.$ Such $n$ must belong to $S$, and it also satisfies that $\mu(A\cap T^{-n} A)>0$. Thus $S$ is a set of recurrence. On the other hand, there exists an interval $J$ with $\lambda(J)\leq \frac{1}{4}$ such that $\{n\alpha: n\in S\}\subset J$. Hence, for any interval $E$ with $\lambda (E)<\frac{3}{4}$, we must have $\lambda(J+E)<1.$ So, $\lambda\Big(\cup_{n\in S}(E-n\alpha)\Big)<1$. Hence, $S$ cannot satisfy the complete recurrence property. \qed
\end{exmp}

\section{Fluctuation and non-monotonicity of the classical ergodic averages}\label{Fluctuation}

\subsection{Proof of the fluctuation and non-monotonicity results}

We begin by proving \Cref{updown}.

First observe that $\setA_{[N+1]}f(T^n x) \le \setA_{[N]}f(T^n x)$ implies that
\[\frac 1{N+1} f(T^{N+1}x) \le (\frac 1N - \frac 1{N+1}) \sum\limits_{[N]} f(T^nx).\]  This means that $f(T^{N+1}x) \le \setA_{[N]}f(T^n x).$   Indeed, $\setA_{[N+1]}f(T^n x) \le \setA_{[N]}f(T^n x)$ holds if and only if $f(T^{N+1}x) \le \setA_{[N]}f(T^n x)$ holds.  Similarly, $\setA_{[N+1]}f(T^n x) \ge \setA_{[N]}f(T^n x)$ holds if and only if we have $f(T^{N+1}x) \ge \setA_{[N]}f(T^n x)$.
Now if either of these inequalities hold for all large enough values of $N$, then we can compare $f(T^{N+1}x)$ to $\setA_{[N]}f(T^n x)$ to arrive at a contradiction.  

\begin{proof}[Proof of \Cref{updown}]  Without loss of generality, we can assume that $f$ is mean-zero but not constant.  Hence, there is some $\delta > 0$, and there are sets $A$ and $B$ of positive measure, such that $f\ge \delta $ on $A$ and $f\le -\delta$ on $B$.  By ergodicity, for a.e. $x$, there is some $N_0$, depending on $x$, such that $|\setA_{[N]}f(T^n x)| \le \delta/2$ for all $N\ge N_0$.  In addition, the ergodicity of $T$ implies that for a.e. $x$, we have $T^{N+1}x \in A$ for infinitely many $N$, and $T^{N+1}x \in B$ for infinitely many $N$.  Therefore, for a.e. $x$,  we cannot have for all $N\ge N_0$,  $f(T^{N+1}x) \le \setA_{[N]}f(T^n x)$ because using recurrence into $A$ this would imply $\delta \le \delta/2$.  But at the same time, for a.e. $x$, we cannot have for all $N\ge N_0$, $f(T^{N+1}x) \ge \setA_{[N]}f(T^n x)$ because using recurrence into $B$ this would imply $-\delta \ge -\delta/2$. Using both of these computations, we see that for a.e. $x$, for infinitely many $N$, we do not have $\setA_{[N+1]}f(T^n x) \le \setA_{[N]}f(T^n x)$, and for infinitely many $N$, we do not have $\setA_{[N+1]}f(T^n x) \ge \setA_{[N]}f(T^n x)$.  Hence, for a.e. $x$, the ergodic averages are not monotonically decreasing, and they are not monotonically increasing.
\end{proof}
Now, we will prove \Cref{prop:updowngap}.
\begin{proof}[Proof of \Cref{prop:updowngap}]
Without loss of generality, we can assume that $f$ is mean-zero but not constant.  Hence, there is some $\delta > 0$, and there are sets $A$ and $B$ of positive measure, such that $f\ge \delta $ on $A$ and $f\le -\delta$ on $B$. Let $d= \frac{\delta}{6}$. If we have $\displaystyle \setA_{[N+1]}f(T^n x)\le \frac dN + \setA_{[N]}f(T^n x)$ for all sufficiently large $N$, then using the computation above, we have $\displaystyle f(T^{N+1}x) \le \frac {(N+1)d}N + \setA_{[N]}f(T^n x)$ for all sufficiently large $N$.  But for a.e. $x$, $T^{N+1}x \in A$ for infinitely many $N$.  Also, for  a.e. $x$,  $\setA_{[N]}f(T^n x) \to \int f\, d\mu = 0$ as $N\to \infty$.  Hence, for a.e. $x$, we have for infinitely many $N$,  
$\displaystyle \delta \le 2d + \setA_{[N]}f(T^n x) \le 2d + \frac {\delta}2 = \frac {\delta}3 + \frac {\delta}2$.  This is impossible.  So we must have for a.e. $x$, $\setA_{[N+1]}f(T^n x) > \frac dN + \setA_{[N]}f(T^n x)$ for infinitely many $N$.

Similarly, if we have $\setA_{[N+1]}f(T^n x)\ge -\frac dN + \setA_{[N]}f(T^n x)$ for all sufficiently large $N$, then using the computation above, we have $f(T^{N+1}x) \ge -\frac {(N+1)d}N + \setA_{[N]}f(T^n x)$ for all sufficiently large $N$.  But for a.e. $x$, $T^{N+1}x \in B$ for infinitely many $N$.  Also, for a.e. $\setA_{[N]}f(T^n x) \to \int f\, d\mu = 0$ as $N\to \infty$. Hence,  for a.e. $x$, we have for infinitely many $N$, 
$-\delta \ge -2d + \setA_{[N]}f(T^n x) \ge -2d - \frac {\delta}2 = -\frac {\delta}3 - \frac {\delta}2$.  This is impossible.  So we must have for a.e. $x$,  $\setA_{[N+1]}f(T^n x) < -\frac dN + \setA_{[N]}f(T^n x)$ for infinitely many $N$.
\end{proof}

 We need the following results to prove \Cref{thm:characterization}. See \ref{barrier} for the definitions of upper barrier and lower barrier.

\begin{prop}\label{prop:subsequencefluctuation} 
Suppose that $(N_i)$ satisfies the complete recurrence property for the system $(X,\mathcal{B},\mu,T)$. Let $F$ be a measurable function such that $F$ has no
upper barrier and no lower barrier. Also, assume that $f = F-F\circ T$ is an integrable function with $\int f=0$. Then for a.e. $x$, the averages $\setA_{ [N_i]}f(T^n x) < 0$ infinitely often and $\setA_{ [N_i]}f(T^n x) > 0$ infinitely often.
\end{prop}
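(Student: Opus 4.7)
The plan is to exploit the telescoping identity. Since $f = F - F\circ T$, one has
\[
\sum_{n=1}^{N_i} f(T^n x) = F(Tx) - F(T^{N_i+1} x),
\]
so that
\[
\setA_{[N_i]} f(T^n x) = \frac{F(Tx) - F(T^{N_i+1} x)}{N_i}.
\]
In particular, the sign of $\setA_{[N_i]} f(T^n x)$ is opposite to the sign of $F(T^{N_i+1} x) - F(Tx)$. The proposition therefore reduces to showing that, for a.e.\ $x$, one has both $F(T^{N_i+1} x) > F(Tx)$ for infinitely many $i$ and $F(T^{N_i+1} x) < F(Tx)$ for infinitely many $i$.

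Next I would unpack the ``no upper barrier'' hypothesis in terms of $M := \operatorname{ess\,sup} F \in \setR \cup \{+\infty\}$. A short check shows that $F$ has an upper barrier if and only if $M < \infty$ and $\mu(\{F = M\}) > 0$; so ``no upper barrier'' means that either $M = +\infty$ or $\mu(\{F = M\}) = 0$. In either case, $F(y) < M$ on a set of full measure, and by $T$-invariance of $\mu$, $F(Tx) < M$ for a.e.\ $x$. Symmetrically, ``no lower barrier'' gives $F(Tx) > m := \operatorname{ess\,inf} F$ a.e.

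The core of the proof is to show that $\limsup_i F(T^{N_i+1} x) \geq M$ for a.e.\ $x$. For each $c < M$, the set $E_c := \{F > c\}$ has positive measure, hence so does $T^{-1} E_c$; since $T^{-(N_i+1)} E_c = T^{-N_i}(T^{-1} E_c)$, the complete recurrence property applied to $T^{-1} E_c$ yields $\mu\bigl(\bigcup_{i \geq L} T^{-(N_i+1)} E_c\bigr) = 1$ for every $L$. Intersecting over $L \in \setN$ and over a countable sequence $c_n \uparrow M$ shows that, for a.e.\ $x$, $F(T^{N_i+1} x) > c_n$ for infinitely many $i$, for every $n$; hence $\limsup_i F(T^{N_i+1} x) \geq M$ a.e. Combined with $F(Tx) < M$ a.e., this yields $F(T^{N_i+1} x) > F(Tx)$ infinitely often for a.e.\ $x$, i.e.\ $\setA_{[N_i]} f(T^n x) < 0$ infinitely often a.e. The completely symmetric argument, using $\{F < c\}$ for $c > m$ and ``no lower barrier'', gives $\setA_{[N_i]} f(T^n x) > 0$ infinitely often a.e.

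The main point requiring care is the translation step: one must pin down that ``no upper barrier'' forces the strict inequality $F(Tx) < M$ almost everywhere (and symmetrically for $m$). This is what lets the $\limsup \geq M$ bound produced by complete recurrence upgrade to honest strict, infinitely-often fluctuation of $F(T^{N_i+1} x)$ about $F(Tx)$; once it is in place, the complete recurrence property does precisely what it was designed for, ensuring positive-measure targets are visited a.e.\ along the shifted sequence $(N_i + 1)$.
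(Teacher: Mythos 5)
Your proof is correct and takes essentially the same route as the paper: telescope the sum to $\setA_{[N_i]}f(T^n x)=\frac{1}{N_i}\bigl(F(Tx)-F(T^{N_i+1}x)\bigr)$, use the absence of an upper (resp.\ lower) barrier to see that $F(Tx)$ lies strictly below $\operatorname{ess\,sup}F$ (resp.\ strictly above $\operatorname{ess\,inf}F$) for a.e.\ $x$, and use complete recurrence along the shifted sets to push $F(T^{N_i+1}x)$ above (resp.\ below) that value infinitely often. The only difference is that you run the recurrence over the countable family of superlevel sets $\{F>c_n\}$ with $c_n\uparrow\operatorname{ess\,sup}F$ rather than over the $x$-dependent set $B(x)$ used in the paper, which makes the null-set bookkeeping slightly cleaner but does not change the argument.
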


\begin{proof}

Since $F$ does not have an upper barrier, $F\circ T$ also does not have an upper barrier. Hence, for a.e. $x$, there exists a set $B=B(x)$ of positive measure such that $F(Ty)>F(Tx)$ for all $y\in B$. Since $(N_i)$ satisfies the complete recurrence property, for every positive integer $L$, we have
$\displaystyle \mu\Big(\cup_{i\geq L}T^{-N_i}B\Big)=1.$ Thus, for infinitely many $i$, $T^{N_i}x$ falls into $B$. For such an $i$, $\setA_{ [N_i]}f(T^{n}x)=\frac{1}{N_i} \Big(F(Tx)-F(T^{N_i+1}x)\Big)<0$. Similarly,  for infinitely many $j$, $\setA_{ [N_j]}f(T^{n}x)>0$.
\end{proof}

Since $\setN$ (with respect to the natural order) satisfies the complete recurrence property, we have the following corollary.
\begin{cor}\label{cor:1} 
 Let $F$ be a measurable function such that $F$ has no
upper barrier and no lower barrier. Also, assume that $f = F-F\circ T$ is an integrable function with $\int f=0$. Then for a.e. $x$, the averages $\setA_{ [N]}f(T^n x) < 0$ infinitely often and $\setA_{ [N]}f(T^n x) > 0$ infinitely often.
\end{cor}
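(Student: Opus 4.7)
My plan is to derive this as a direct specialization of \Cref{prop:subsequencefluctuation} to the natural sequence $N_i = i$. Once I know that $\setN$, with its usual order, satisfies the complete recurrence property for the ergodic system $(X,\mathcal{B},\mu,T)$, the corollary follows word-for-word from \Cref{prop:subsequencefluctuation}, since the averages $\setA_{[N_i]}f(T^n x)$ in that proposition then become the ordinary ergodic averages $\setA_{[N]}f(T^n x)$. So the whole task reduces to a single verification, which is precisely what the sentence introducing the corollary asserts.

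To check that $N_i = i$ satisfies complete recurrence, I would fix an arbitrary $E \in \mathcal{B}$ with $\mu(E) > 0$ and an integer $L \ge 1$, and set $U_L := \bigcup_{i \ge L} T^{-i} E$. Observe that $T^{-1} U_L = U_{L+1} \subseteq U_L$, and since $T$ preserves $\mu$, this gives $\mu(U_{L+1}) = \mu(U_L)$, so $U_L$ is $T$-invariant modulo a null set. Because $U_L \supseteq T^{-L} E$, we also have $\mu(U_L) \ge \mu(E) > 0$. Ergodicity of $T$ then forces $\mu(U_L) = 1$, which is exactly condition~\eqref{eq:recurrence} for the sequence $(i)_{i \ge 1}$.

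Having verified this, I would simply invoke \Cref{prop:subsequencefluctuation} with $(N_i) = (i)_{i\ge 1}$. The hypotheses on $F$ (no upper and no lower barrier) and on $f = F - F \circ T$ (integrable with $\int f = 0$) are transferred verbatim, and the conclusion reads that for a.e.\ $x$, $\setA_{[N]}f(T^n x) < 0$ for infinitely many $N$ and $\setA_{[N]}f(T^n x) > 0$ for infinitely many $N$. There is no substantive obstacle: \Cref{prop:subsequencefluctuation} has absorbed all the actual work, and the only residual point, verifying that $\setN$ itself has the complete recurrence property, is a one-line consequence of ergodicity and measure-preservation as above.
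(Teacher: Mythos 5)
Your proposal is correct and matches the paper's own argument: the corollary is deduced by specializing \Cref{prop:subsequencefluctuation} to $(N_i)=(i)_{i\ge 1}$, after noting that $\setN$ satisfies the complete recurrence property. Your verification of that property (showing $U_L=\bigcup_{i\ge L}T^{-i}E$ is essentially $T$-invariant and invoking ergodicity) correctly supplies the one step the paper states without proof.
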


 \begin{rem}
One cannot expect that the conclusion of \Cref{cor:1} will hold for \emph{every} $x\in X$. In fact, in contrast to \Cref{cor:1}, it was proved by Peres ~\cite{Peres} that if $X$ is a compact space, $\mu$ is a Borel probability measure on $X$, $T:X\rightarrow X$ a measure-preserving continuous transformation, and $g:X\rightarrow \setR$ a continuous function, then for some $y\in X$ 
\begin{equation}\label{eq:heavy}
\frac{1}{N}\sum_{n\in [N] }g (T^n y)\geq \int g d\mu \text{ for all }N\geq 1.
\end{equation}
The collection of those $y$ which satisfy \eqref{eq:heavy} is called the \emph{Heavy set} for $g$. See also ~\cite{BR} for further results on the Heavy set.
 \end{rem}
Now, we are ready to prove  Theorem~\ref{thm:characterization}. This proof is based on the argument of \cite[Theorem 4]{Hal}.
\begin{proof}[Proof of Theorem~\ref{thm:characterization}]
Assume that $F$ has an
upper barrier (the lower barrier case is similar). Then by definition, there is a set $E$ of positive measure on which $F$ is constant
$p$ and $F(x) \leq p$ for a.e. $x \in X$. Now consider the function $f=F-F\circ T$. Observe that  $\displaystyle\setA_{ [N]} f (T^n x)\geq 0$ for all $x\in E$.

Conversely, assume that there exists a set $E\in \mathcal{B}$ such that $\mu(E)>0$ and for all $x\in E$, $\setA_{ [N]}f(T^n x) \geq 0$ eventually (the $`\leq'$-case is similar). This implies that $\setA_{ [0,N)}f(T^n x) \geq 0$ eventually on $T^{-1 }E.$ Our goal is to show that $f= F- F\circ T$ for some measurable function $F$ that also has an upper or lower barrier. Consider the following function:
\begin{equation}
    F(x)=\inf_{N\in \setN} \sum_{n\in [0,N)}f(T^n x).
\end{equation}
Observe that $F$ is bounded below on $T^{-1 }E$; otherwise we cannot have $\setA_{ [0,N)}f(T^n x) \geq 0$ eventually on $T^{-1 }E$. Since the set $G:=\{x: F(x) \text{ is bounded below}\}$ is invariant under $T$, and $T$ is ergodic, $\mu(G)=1$. So, $F$ is a measurable function that is finite a.e. We have
\begin{equation}\label{eq:Hal:1}
\begin{aligned}
F(Tx)= \inf_{N\in \setN} \sum_{n\in [N]}f(T^n x)&=\Big(\inf_{N\in \setN} \sum_{n\in [0,N]}f(T^n x)\Big)-f(x)\\
&\geq \Big(\inf_{N\in \setN} \sum_{n\in [0,N)}f(T^n x)\Big)-f(x)\\
&= F(x)- f(x).
\end{aligned}
\end{equation}
Define $K(x)= f(x)+ F(Tx)- F(x)$.
By \eqref{eq:Hal:1}, we have $K(x)\geq 0$ for almost every $x$, and by the pointwise ergodic theorem
\begin{equation}
    \dfrac{1}{N}\sum_{n\in [0,N)}K(T^n x) \to \int_X K(x) d\mu \text{ for a.e. }x.
\end{equation}
(even if $\int_X K(x) d\mu= \infty$, it follows from the finite case).
The left-hand side is 
\begin{equation}
\dfrac{1}{N}\sum_{n\in [0,N)}f(T^n x)+ \dfrac{1}{N}\sum_{n\in [0,N)}\Big(F(T^N x)- F(x) \Big).
\end{equation}
The first term tends to $\int f=0$ a.e. by the pointwise ergodic theorem. Since $F(x)$ is finite a.e., there exists $M>0$ such that the set $S(M)=\{x: |F(x)|\leq M\}$ is of positive measure, and hence for almost every $x$, $T^N x$ falls into $S(M)$ for infinitely many $N$. For such a $N$, $\big(F(T^N x)- F(x)\big) $ is bounded by $2M$, and so for almost every $x\in S(M)$ the second term also tends to $0$. Letting $M\to \infty$, we conclude that $\int_X K(x) d\mu= 0$. Since $K(x)$ is non-negative, we must have $K(x)=0 \text{ a.e.}$
This proves that $f= F- F\circ T$ almost everywhere. By assumption, we have $\setA_{ [N]}f(T^n x) \geq 0$ eventually on a set of positive measure $E$. Invoke \Cref{cor:1} to conclude that $F$ must have either an upper barrier or a lower barrier, finishing the proof of the theorem.
\end{proof}

We need the following lemma of Halász ~\cite[Theorem 2]{Hal} for proving the next corollary.
\begin{lem}\label{thm:4} 
Let $(X,\mathcal{B},\mu, T)$ be a totally ergodic system. If for a measurable set $E$, $\displaystyle\Big(\sum_{n\in [N]}\setone_E(T^n x)-N\mu(E)\Big)_{N\in \setN}$  is bounded below on a set of positive measure, then $e^{2\pi i \mu(E)}$ is an eigenvalue of $T$, that is , there exists a function $g(x)$ not identically $0$ a.e. such that $g(Tx)= e^{2\pi i \mu(E)} g(x)$.
\end{lem}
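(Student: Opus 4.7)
The plan is to build an eigenfunction for $e^{2\pi i \mu(E)}$ directly from the Birkhoff sums of the centered indicator. Write $\alpha=\mu(E)$, $f=\setone_E-\alpha$, and $S_N(x)=\sum_{n\in [N]}f(T^n x)$, and set
\[
\phi(x)\;=\;\liminf_{N\to\infty} S_N(x).
\]
The hypothesis says precisely that $\phi(x)>-\infty$ on a set $A$ of positive measure. My candidate eigenfunction is $g(x)=e^{2\pi i\phi(x)}$, and the whole proof is about making this rigorous.

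First I would establish the functional equation $\phi\circ T=\phi-f\circ T$ on the set $G=\{\phi>-\infty\}$. This follows from the cocycle identity $S_N(Tx)=S_{N+1}(x)-f(Tx)$ by taking the liminf in $N$; since $|f|\le 1$, the shift by the finite quantity $f(Tx)$ preserves finiteness, so finiteness of $\phi$ at $x$ is equivalent to finiteness at $Tx$. Hence $G$ is $T$-invariant, contains $A$, and therefore has full measure by ergodicity. Thus $\phi$ is a genuine real-valued measurable function on a full-measure invariant set, satisfying $\phi(Tx)=\phi(x)-f(Tx)$ almost everywhere.

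Then I would read off the eigenvalue relation for $g$. Using the functional equation,
\[
g(Tx)\;=\;e^{2\pi i \phi(Tx)}\;=\;e^{2\pi i \phi(x)}\cdot e^{-2\pi i(\setone_E(Tx)-\alpha)}\;=\;g(x)\cdot e^{2\pi i \alpha}\cdot e^{-2\pi i\setone_E(Tx)}.
\]
Because $\setone_E(Tx)\in\{0,1\}$, the last exponential is identically $1$, so $g\circ T=e^{2\pi i\alpha}g$. Since $|g|\equiv 1$ on $G$, the function $g$ is not a.e. zero, and the proof is complete.

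The main obstacle, and really the only non-routine point, is the invariance argument that promotes finiteness of $\phi$ from $A$ to all of $X$. Here the boundedness $|f|\le 1$ is essential: it is what lets the single-step cocycle identity transfer finiteness across $T$, after which ergodicity closes the gap. Once $\phi$ is finite a.e., the construction of the eigenfunction $g$ and the verification of $g\circ T=e^{2\pi i \alpha}g$ are immediate, and the mod-$1$ magic trick at the end (that $\setone_E$ takes values in $\setZ$) is what converts the centered cocycle into a clean rotation by $\alpha$ on the unit circle.
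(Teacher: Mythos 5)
The paper itself offers no proof of this lemma (it is quoted from Halász), so the only issue is whether your argument is complete, and it is not quite. Your overall strategy is the right one: solve the cohomological equation $\phi\circ T=\phi-f\circ T$ for $f=\setone_E-\mu(E)$ using the partial sums $S_N$, then exponentiate, exploiting that $\setone_E$ is integer-valued; the cocycle identity and the computation $g\circ T=e^{2\pi i\mu(E)}g$ are correct wherever $\phi$ is finite. The gap is the assertion that $\phi=\liminf_N S_N$ is real-valued a.e. Your invariance-plus-ergodicity argument only shows that $\{\phi>-\infty\}$ has full measure; it does not touch the value $+\infty$. Nothing in the hypothesis excludes $S_N(x)\to+\infty$ for a.e.\ $x$ (a sequence tending to $+\infty$ is in particular bounded below, so the hypothesis is even consistent with this scenario), and in that case $\phi\equiv+\infty$ a.e.\ and $g=e^{2\pi i\phi}$ is undefined. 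The set $\{\phi=+\infty\}$ is again invariant, hence null or conull, but ruling out the conull case is a genuine theorem: it is the recurrence of mean-zero integrable cocycles (Atkinson's theorem), or equivalently it follows from \Cref{thm:Hal}, since $S_N\to+\infty$ a.e.\ would make $\setA_{[N]}\setone_E(T^nx)-\mu(E)=S_N(x)/N$ eventually positive a.e. You neither cite nor prove any such statement, so ``thus $\phi$ is a genuine real-valued measurable function'' is an unjustified leap, and it is precisely the nontrivial point.

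There are two standard repairs. Either keep your $\phi$ and invoke Atkinson's recurrence theorem (or \Cref{thm:Hal}) to exclude $\phi=+\infty$; or argue as this paper does in its proof of \Cref{thm:characterization}: set $F(x)=\inf_{N\in\setN}\sum_{n\in[0,N)}f(T^nx)$, which is automatically $<+\infty$ (it is dominated by its first term) and is $>-\infty$ a.e.\ by your invariance argument, but satisfies only the inequality $F\circ T\ge F-f$; one then shows the nonnegative defect $K=f+F\circ T-F$ has zero integral (telescoping plus the Birkhoff theorem and returns of $T^Nx$ to a set where $|F|\le M$), so $K=0$ a.e.\ and the exact equation $F\circ T=F-f$ holds, after which your exponentiation with $g=e^{2\pi iF}$ goes through verbatim. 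The $\liminf$ shortcut buys you the exact cocycle equation for free, but only at the cost of the $+\infty$ issue, which is where the real work hides.
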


\begin{cor} 
Let $(X,\mathcal{B}, \mu, T)$ be a measure preserving system such that $T$ is totally ergodic. Then for every $E\in \mathcal{B}$ with $0<\mu(E)<1$, $\displaystyle\setA_{ [N]}\setone_E(T^n x)$ fluctuates infinitely often around $\mu(E)$ a.e. 
\end{cor}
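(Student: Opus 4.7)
The plan is to set $f = \setone_E - \mu(E)$ (which has $\int f = 0$) and to establish, by contradiction, that for almost every $x$ the averages $\setA_{[N]}f(T^n x)$ are strictly positive for infinitely many $N$ and strictly negative for infinitely many $N$.

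Suppose towards a contradiction that strict fluctuation fails on some measurable set $S$ with $\mu(S) > 0$. For each $x \in S$, at least one of the two alternatives must hold: $\setA_{[N]}f(T^n x) \geq 0$ eventually, or $\setA_{[N]}f(T^n x) \leq 0$ eventually. Writing $S$ as the union of the two corresponding measurable subsets and, if necessary, swapping $E$ with $E^c$ (which replaces $f$ by $-f$ and leaves all hypotheses symmetric), I may assume there is a positive-measure set $S_+$ on which $\setA_{[N]}f(T^n x) \geq 0$ for all sufficiently large $N$. Equivalently, the partial sums $\sum_{n \in [N]} \setone_E(T^n x) - N\mu(E)$ are bounded below on $S_+$, with a threshold and bound that can depend on $x$, which is all \Cref{thm:4} requires.

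I would then invoke \Cref{thm:4} to conclude that $e^{2\pi i \mu(E)}$ is an eigenvalue of $T$. Because $T$ is totally ergodic, no nontrivial root of unity is an eigenvalue, so this forces $\mu(E)$ to be irrational. The argument now pivots on a trivial observation: the Birkhoff averages $\setA_{[N]}\setone_E(T^n x)$ are always of the form $k/N$ with $k \in \{0,1,\dots,N\}$, hence rational, and therefore cannot equal the irrational number $\mu(E)$. So on $S_+$ we have $\setA_{[N]}f(T^n x) \neq 0$ for every $N$, and the assumption ``$\geq 0$ eventually'' upgrades to ``$> 0$ eventually''. But $S_+$ has positive measure, and this contradicts Halász's \Cref{thm:Hal}, which asserts that for almost every $x$, $\setA_{[N]}f(T^n x) - \int f$ cannot be eventually strictly positive.

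The main obstacle is that \Cref{thm:4} and \Cref{thm:Hal} each cover only half of the situation: the eigenvalue conclusion of \Cref{thm:4} combined with total ergodicity cleanly rules out only rational $\mu(E)$, while \Cref{thm:Hal} by itself only delivers weak (nonstrict) fluctuation and leaves open the possibility that the averages hit $\mu(E)$ exactly. The crucial idea is to chain the two outputs together, using total ergodicity to eliminate the rational case and then the rationality of indicator averages to eliminate the remaining irrational case.
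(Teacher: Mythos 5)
Your proof is correct and uses essentially the same ingredients as the paper's: the reduction (via $E\leftrightarrow E^c$) to partial sums bounded below on a set of positive measure, Halász's eigenvalue lemma (\Cref{thm:4}), the rationality of the averages $\setA_{[N]}\setone_E(T^n x)$, and Halász's weak sign-change theorem (\Cref{thm:Hal}). The only difference is the order of deductions — you use total ergodicity to force $\mu(E)$ irrational and then contradict \Cref{thm:Hal}, whereas the paper uses \Cref{thm:Hal} to force $\mu(E)$ rational and then contradicts total ergodicity via the eigenvalue — which is just a contrapositive rearrangement of the same argument.
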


\begin{proof}
Suppose, to the contrary, $\displaystyle\setA_{ [N]}\setone_E(T^n x)$ fails to fluctuate on a set of positive measure $F$. Then we claim that either,
\begin{equation}\label{eq:new1}
\Big(\sum_{n\in [N]}\setone_E(T^n x)-N\mu(E)\Big)_{N\in \setN} \text{ is bounded below on a set of positive measure}
\end{equation}
or,
\begin{equation}\label{eq:new2}
\Big(\sum_{n\in [N]}\setone_{E^c}(T^n x)-N\mu(E^c)\Big)_{N\in \setN} \text{ is bounded below on a set of positive measure.}
\end{equation}
 Indeed, if for almost every $x$, we have
\begin{equation}\label{new:1}
\inf_{N\in \setN}\Big(\displaystyle\sum_{n\in [N]}\setone_E(T^n x)-N\mu(E)\Big)=-\infty,
\end{equation} 
then for almost every $x$, we have $\displaystyle\setA_{ [N]}\setone_E(T^n x)-\mu(E)<0$ infinitely often. Similarly, if \eqref{eq:new2} does not hold, then for almost every $x$, $\displaystyle\setA_{ [N]}\setone_E(T^n x)-\mu(E)>0$ infinitely often. By assumption both cannot occur simultaneously.

By Lemma~\ref{thm:4}, either $e^{2\pi i \mu(E)}$ or $e^{2\pi i \mu(E^c)}$ is an eigenvalue of $T$. Again, since $T$ is ergodic, by \Cref{thm:Hal} for almost every $x$ we can find two subsequences $N_i=N_i(x)$ and  $N_j=N_j(x)$ such that $\setA_{ [N_i]}\setone_E (T^n x)\geq \mu(E)$ and $\setA_{ [N_j]}\setone_E(T^n x)\leq \mu(E)$. Hence, the failure of fluctuation imply that for all $x\in F$, we can find a subsequence $N_{i}=N_i(x)$ such that $\setA_{ [N_i]}\setone_E(T^n x)=\mu(E).$ Since $\setA_{ [N]}\setone_E(T^n x)$ is always a rational number, the equality can hold only if $\mu(E)$ (and $\mu(E^c)$) are rational numbers.  This contradicts the total ergodicity of $T$ and thereby finishes the proof of the corollary. 
\end{proof}

The questions of non-monotonicity and fluctuation for the subsequence of ergodic averages are much more subtle than that of the classical ergodic averages. As the following example shows, one can construct a totally ergodic system $(X,\mathcal{B},\mu,T)$ (which is going to be an irrational rotation on the usual one-dimensional torus for us), a function $f\in L^1(X)$, and a sequence $(N_i)$ of positive integers such that for almost every $x$, the averages $\displaystyle\setA_{ [N_i]}f(T^n x)$ are eventually, strictly monotone.
\begin{prop}\label{prop:nonmonotonicity example} 
  Let $\alpha$ be an irrational number and let the sequence $(N_i)$ be
  strictly increasing and satisfy $0<(N_{i}+1)\alpha<(N_{i-1}+1)\alpha\mod 1$
  for every $i$.

  Then there is a function $f\in L^1(\mathbb T,\lambda)$ 
  such that for almost every $x\in \setT$ we have
  \begin{equation}
    \label{eq:non-mon}
    \setA_{[N_{i-1}]}f(x+n\alpha)-\setA_{[N_i]}f(x+n\alpha)>0
    \text{ for }i\ge i(x). 
  \end{equation}

  In particular, the sequence
  $\big(\setA_{[N_i]}f(x+n\alpha)\big)_i$ of averages  is almost everywhere
  strictly monotone.
\end{prop}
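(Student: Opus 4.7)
The plan is to construct $f$ as a coboundary $f = g - g\circ R_\alpha$ for a carefully chosen $g \in L^1(\setT)$. With this choice $\int f = 0$, and telescoping gives
\[
\sum_{n=1}^N f(x+n\alpha) \;=\; g(x+\alpha) - g(x+(N+1)\alpha),
\]
so that
\[
\setA_{[N_i]}f(x+n\alpha) \;=\; \frac{g(x+\alpha) - g(x+\beta_i)}{N_i},\qquad \beta_i := (N_i+1)\alpha \bmod 1.
\]
The hypothesis says precisely that $(\beta_i)$ is strictly decreasing in $(0,1)$, hence $\beta_i \searrow L$ for some $L \in [0,\beta_1)$. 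Writing $h_i(x) := g(x+\alpha) - g(x+\beta_i)$, the target inequality $\setA_{[N_{i-1}]}f(x+n\alpha) - \setA_{[N_i]}f(x+n\alpha) > 0$ becomes
\[
\frac{h_{i-1}(x)}{N_{i-1}} \;>\; \frac{h_i(x)}{N_i},
\]
which we aim to establish for a.e.~$x$ and all $i \ge i(x)$.

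The guiding idea is that if $g$ is locally constant at almost every point---for instance, a step function whose set of jumps has measure zero---then for a.e.~$x$ the sequence $g(x+\beta_i)$ eventually stabilizes at $g(x+L)$, since $\beta_i \searrow L$ forces $x+\beta_i$ to enter the constancy piece of $g$ that contains $x+L$. For such large $i$ the desired inequality collapses to
\[
\bigl(g(x+\alpha) - g(x+L)\bigr)\Bigl(\frac{1}{N_{i-1}} - \frac{1}{N_i}\Bigr) > 0,
\]
which, since $N_{i-1} < N_i$, reduces to $g(x+\alpha) > g(x+L)$ on a full-measure set.

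The main obstacle is that this last inequality cannot hold almost everywhere for any single step function: translation invariance of $\lambda$ forces $\int[g(y+\alpha) - g(y+L)]\,d\lambda(y) = 0$, so the integrand is nonpositive on a set of positive measure. To overcome this, $g$ is built as a countable superposition $g = \sum_{k} c_k \setone_{J_k}$ whose level sets $J_k$ are tailored to the specific geometry of $(\beta_i)$---for example, constructed from the gaps $\beta_{i-1} - \beta_i$---so that every $x$ outside a null set eventually lies in the ``positive'' region of some $J_k$, with higher-indexed terms supplying the missing positivity on every exceptional subset. The weights $c_k$ are chosen to guarantee $L^1$-integrability of $f$ and to ensure that the positive contributions dominate, while a Borel--Cantelli style argument produces the a.e.~threshold $i(x)$. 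The engineering of weights and supports so that all requirements hold simultaneously for a.e.~$x$---in particular handling the borderline case in which $g(x+\alpha) - g(x+L)$ nearly vanishes---is the hardest part of the argument; the remaining telescoping-and-rate estimate is routine.
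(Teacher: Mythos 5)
Your reduction is exactly the paper's: realize $f$ as a coboundary and telescope, so the whole problem is the choice of the transfer function $g$. Your negative observation is correct and worthwhile: no locally constant $g$ can work, because stabilization of $g(x+\beta_i)$ collapses the required inequality to $g(x+\alpha)>g(x+L)$ a.e., which translation invariance of $\lambda$ forbids. But the repair you sketch is the actual content of the proposition, and it is missing. The superposition $g=\sum_k c_k\setone_{J_k}$ is still described inside the stabilization framework (each $x$ should ``eventually lie in the positive region of some $J_k$''), and that framework collides with your own obstruction: for \emph{any} $g$ such that $g(x+\beta_i)$ is eventually constant a.e., the inequality again reduces to an a.e.\ strict inequality between $g$ and a fixed translate of $g$, which is impossible; the integral-zero constraint applies just as well to every partial sum of your series, so there is no mechanism by which ``higher-indexed terms'' can create positivity at the exceptional points without destroying it elsewhere. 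Moreover, a Borel--Cantelli argument naturally yields ``infinitely often'' or ``eventually, after discarding bad sets of summable measure,'' and you exhibit no such summable structure, whereas \eqref{eq:non-mon} demands a strict inequality for \emph{every} $i\ge i(x)$ for one fixed $f\in L^1$. So the construction is not a hard-but-routine engineering step left to the reader; as proposed it cannot succeed.

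The idea you are missing is to abandon stabilization and make the transfer function piecewise \emph{strictly monotone}. The paper takes $g$ strictly increasing on each interval of a finite partition of $[0,1)$ and $f=g\circ R_\alpha-g$ (with your sign convention, take $g$ strictly decreasing, or replace $g$ by $-g$), so that, up to a harmless index shift, $\setA_{[N_i]}f(x+n\alpha)=\frac1{N_i}\bigl(g(x+\beta_i)-g(x)\bigr)$. For a.e.\ $x$ the points $x+\beta_{i-1}>x+\beta_i$ eventually lie in the same monotonicity interval as $x$, just to its right (this is where the decrease of the $\beta_i$, in the regime $\beta_i\downarrow0$, is used), whence the numerators satisfy $g(x+\beta_{i-1})-g(x)>g(x+\beta_i)-g(x)>0$ for all $i\ge i(x)$; a strictly larger positive numerator over a strictly smaller denominator gives $\frac1{N_{i-1}}\bigl(g(x+\beta_{i-1})-g(x)\bigr)>\frac1{N_i}\bigl(g(x+\beta_i)-g(x)\bigr)$, i.e.\ \eqref{eq:non-mon}. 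The strictness now comes from the strict monotonicity of $g$ at every finite stage rather than from the sign of a limiting difference, and the positivity is automatic because the comparison value is $g$ at (essentially) $x$ itself, not at a fixed translate, so the integral obstruction you identified never arises. (Your limit-$L$ framing does expose a genuine subtlety: when $\beta_i\to L>0$ the comparison point is a nontrivial translate and the eventual sign can depend on $x$, which is how the result is advertised in the introduction; but the mechanism that makes any version of the argument work is strict monotonicity of $g$, not a cleverer step function.)
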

\begin{proof}
  In fact, we identify a certain set $F_+$ of coboundaries the
  elements of which will satisfy the claim of the proposition, that
  is, \eqref{eq:non-mon} for $\lambda$-a.e. $x$.
  
  Let us denote by $G_+$ those functions $g\in L^1$ which are
  piecewise increasing, that is, there is a partition of $\mathbb T=[0,1)$
  into finitely many intervals\footnote{We assume that these intervals
    are subintervals of the interval $[0,1)$.} so that on each
  interval $g$ is strictly increasing. Let $F_+$ be the set of
  coboundaries with transfer functions from $G_+$,
  \begin{equation*}
    F_+=\{f: f(x)=g(x+\alpha)-g(x), g\in G_+\}.
  \end{equation*}
  Let
  $f\in F_+$ and let $g\in G_+$ be the corresponding transfer
  function, so $f(x)=g(x+\alpha)-g(x)$. Let $x$ be in the interior of
  one of the intervals $I$ where $g$ is strictly increasing.  Then
  there is an index $i=i(x)$ so that $x+(N_{i-1}+1)\alpha\in I$ for all
  $i\ge i(x)$.  By definition, we have
  \begin{align*}
    \setA_{[N_{i-1}]}f(x+n\alpha)-&\setA_{[N_i]}f(x+n\alpha)\\
    &=\frac1{N_{i-1}}\Big(g(x+(N_{i-1}+1)\alpha)-g(x)\Big) - \frac1{N_{i}}\Big(g(x+(N_{i}+1)\alpha)-g(x)\Big)
  \end{align*}
  If $i\ge i(x)$ then $g\big(x+(N_{i-1}+1)\alpha\big)>g\big(x+(N_{i}+1)\alpha\big)$ since we
  assumed that $(N_{i-1}+1)\alpha>(N_{i}+1)\alpha$ for every $i$, hence
  $x+(N_{i-1}+1)\alpha>x+(N_{i}+1)\alpha$ for $i\ge i(x)$, and $g$ is strictly
  increasing on the interval $I\subset [0,1)$.  But then
  \[g\big(x+(N_{i-1}+1\big)\alpha)-g(x)>g\big(x+(N_{i}+1\big)\alpha)-g(x)\] as well. Since
  $\frac1{N_{i-1}}>\frac1{N_i}$, we have
  \begin{equation*}
    \frac1{N_{i-1}}\Big(g\big(x+(N_{i-1}+1)\alpha\big)-g(x)\Big)>
    \frac1{N_{i}}\Big(g\big(x+(N_{i}+1)\alpha\big)-g(x)\Big)\text{ for }i\ge i(x)
  \end{equation*}
  which then implies
  \begin{equation*}
    \setA_{[N_{i-1}]}f(x+n\alpha)-\setA_{[N_i]}f(x+n\alpha)>0
    \text{ for }i\ge i(x).
  \end{equation*}
\end{proof}
 \begin{rem}
$G_+$ is fairly large, namely it is dense in $L^1$, since every
    continuous function can be approximated in $L^1$-norm by functions
    from $G_+$.  It follows that $F_+$ is dense in the orthocomplement
    of the constant functions, that is, in the set of zero mean $L^1$
    functions.   
 \end{rem}

\medskip

Next, we will prove \Cref{thm:generic fluctuation}. We need a couple of basic facts about the \emph{strong sweeping out property}.

\begin{defn}[Strong sweeping out property]
Let $(X,\mathcal{B},\mu, T)$ be a measure-preserving system. Let $(M_n)$ be a sequence of linear operators from $L^1$ to $L^1$. We say that the \emph{strong sweeping out} property holds for the operators $M_n$ if for every $\epsilon>0$ there is a set $E\in \mathcal{B}$ with $\mu(E)<\epsilon$ and 
\begin{equation*}
\limsup_{N \to \infty} M_N\setone_E(x)=1 \text{ a.e. and } \liminf_{N \to \infty} M_N \setone_E(x)=0 \text{ a.e.}
\end{equation*}

A sequence $(a_n)$ is said to satisfy the \emph{strong sweeping property} if for every aperiodic system $(X,\mathcal{B},\mu, T)$ and for every $\epsilon>0$, there exists a set $E$ with $\mu(E)<\epsilon$ such that
\begin{equation*}
\limsup_{N\to \infty}\setA_{[N]}\setone_E(T^{a_n}x)=1 \text{ a.e. and } \liminf_{N\to \infty} \setA_{[N]}\setone_E(T^{a_n}x)=0 \text{ a.e.}
\end{equation*}
\end{defn}
\begin{lem}\label{lem:sso}
Suppose that $(a_n)$ is a lacunary sequence; that means, there exists $\rho>1$ such that $\frac{a_{n+1}}{a_n}\geq \rho$ for all large $n$. Then $(a_n)$ satisfies the strong sweeping out property.
\end{lem}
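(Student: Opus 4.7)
The plan is to reduce the claim to a finite combinatorial construction on a Rokhlin tower, exploiting the fact that lacunary integer sequences enjoy strong dissociation (Sidon-type) properties.

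First, the averaging operators $M_N f = \frac1N\sum_{n\in[N]} f\circ T^{a_n}$ depend only on the integer sequence $(a_n)$ and commute with the action of $T$, so the strong sweeping out property is a transferable property: by a Calder\'on--Conze style transfer principle (see the discussion in the survey \cite{RW}), to establish the strong sweeping out property of $(a_n)$ across all aperiodic systems, it suffices to produce, for each $\epsilon>0$ and each aperiodic $(X,\mathcal{B},\mu,T)$, a single set $E$ with $\mu(E)<\epsilon$ such that $\limsup_N \setA_{[N]}\setone_E(T^{a_n}x)=1$ and $\liminf_N \setA_{[N]}\setone_E(T^{a_n}x)=0$ almost everywhere.

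Second, given $\epsilon>0$, I would pick a large height $H$ (depending on $\epsilon$ and $\rho$) and apply Rokhlin's lemma to write $X$, up to a set of measure $<\epsilon/4$, as a disjoint union $B\sqcup TB\sqcup\cdots\sqcup T^{H-1}B$ with $\mu(B)\approx 1/H$. For a carefully chosen subset $J\subset\{0,1,\dots,H-1\}$ with $|J|<\epsilon H/4$, set $E=\bigcup_{j\in J}T^jB$, so $\mu(E)<\epsilon$. If $x$ sits at level $\ell$ in the tower and $T^{a_n}x$ is still in the tower, then $\setone_E(T^{a_n}x)=\setone_J\bigl((\ell+a_n)\bmod H\bigr)$, and the fraction of $n\in[N]$ for which this fails is controlled by $O(1/H)$ as $N\to\infty$. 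Consequently the ergodic averages $\setA_{[N]}\setone_E(T^{a_n}x)$ are determined, up to an error of size $O(1/H)$, by the finite residue averages $\frac1N\sum_{n\in[N]}\setone_J\bigl(\ell+a_n\bmod H\bigr)$.

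Third, one uses the lacunarity to choose $J$ together with a sequence of block lengths $N_1<N_2<\cdots$ so that the residue averages have $\limsup$ close to $1$ and $\liminf$ close to $0$. The condition $a_{n+1}/a_n\ge\rho>1$ implies that $(a_n)$ is a Sidon set and, in particular, that the residues $(a_n\bmod H)$ are sufficiently dissociated: one can find arbitrarily long blocks of indices whose reductions mod $H$ are concentrated in $J$, and other blocks that completely avoid $J$. This is the classical Riesz product / Rokhlin tower construction for lacunary sweeping out, originally established for $a_n=2^n$ and extended to general lacunary sequences by Akçoğlu, Bellow, Jones, Losert, Reinhold, and Wierdl. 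Transferring these behaviors through the tower and diagonalizing over $H\to\infty$ along $\epsilon_k\to 0$ produces the desired set $E$.

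The main obstacle is the third step: upgrading the oscillation from a single base point to \emph{almost every} $x$ and simultaneously attaining $\limsup=1$ and $\liminf=0$ on the same set $E$. This requires either Bourgain's entropy method or an explicit Riesz-product construction combined with a Baire category argument, and is where the bulk of the technical work lies; for the purpose of this paper one may invoke the known result for lacunary sequences directly (see \cite[Chapter~II]{RW}), treating \Cref{lem:sso} as a citation of the Akçoğlu--Bellow--Jones--Losert--Reinhold--Wierdl theorem.
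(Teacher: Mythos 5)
Your final move---invoking the known lacunary strong sweeping out theorem rather than proving it---is exactly what the paper does: its entire proof of this lemma is the citation of Corollary 1.2 of \cite{ABJLRW} (with \cite{JO} and \cite{MRW} offered as more transparent alternatives), so the proposal is essentially the same as the paper's. One caution, though: your intermediate Rokhlin-tower sketch would not survive as an actual proof, because for a lacunary sequence $a_n$ grows exponentially, so for any fixed tower height $H$ the point $T^{a_n}x$ has left the tower for all but finitely many $n$, and $\setone_E(T^{a_n}x)$ is \emph{not} determined by the residues $(\ell+a_n)\bmod H$ up to an $O(1/H)$ error; the need to control orbits at scales far beyond any single tower is precisely why the genuine argument in \cite{ABJLRW} is substantially more delicate than this reduction suggests.
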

\begin{proof}
This is proved in Corollary 1.2 of ~\cite{ABJLRW}. See also \cite{JO} or \cite{MRW} for a more transparent proof.
\end{proof}

\begin{proof}[Proof of \Cref{thm:generic fluctuation}]
Define
\begin{align*}
S_M^+=\Big\{f\in L^1: \mu\{x:\setA_{ [N_i]}f(T^n x)>0 \text{ for some } i\geq M\}>1-\frac{1}{M}\Big\} \intertext{ and }
S_M^-=\Big\{f\in L^1: \mu
\{x:\setA_{[N_i]}f(T^n x)<0 \text{ for some } i\geq M\}>1-\frac{1}{M}\Big\}.
\end{align*}
We will prove that $S_M^+$ and $S_M^-$ are both open and dense sets.
 If we can show this, then $\displaystyle{\mathcal{O}}=\cap_{M}S_M^+\cap {S}_M{^-}$ will satisfy the desired properties of the theorem. Notice that $f\in S_M^{+}$ iff $-f\in S_M^{-}$. Hence, we will work only with $S_M^{+}$. The proof of the theorem follows from \Cref{prop:osc1} and \Cref{prop:osc2}.

\begin{prop}\label{prop:osc1}
$S_M^{+}$ is open in $L^1$.
\end{prop}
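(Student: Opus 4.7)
The plan is to show openness of $S_M^+$ by providing, for each $f\in S_M^+$, an explicit $L^1$-radius of perturbation keeping us inside $S_M^+$. The argument reduces the ``some $i\ge M$'' condition to a quantitative one depending on only finitely many $i$'s, and then propagates that quantitative condition to nearby functions via the $L^1$-contractivity of the averages.

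First I would fix $f\in S_M^+$ and write $\mu(E_f)=1-\tfrac1M+3\delta_0$ for some $\delta_0>0$, where
\[
E_f=\{x:\setA_{[N_i]}f(T^n x)>0 \text{ for some } i\ge M\}.
\]
Continuity of measure from below lets me pick $K$ so large that the finite union $E_{f,K}:=\bigcup_{M\le i\le K}\{x:\setA_{[N_i]}f(T^n x)>0\}$ already satisfies $\mu(E_{f,K})>1-\tfrac1M+2\delta_0$. Writing each set $\{\setA_{[N_i]}f(T^n x)>0\}$ as the increasing union of $\{\setA_{[N_i]}f(T^n x)>\eta\}$ over $\eta\downarrow 0$ and applying continuity again, I would pick a single $\eta>0$ small enough that
\[
E_{f,K,\eta}:=\bigcup_{M\le i\le K}\{x:\setA_{[N_i]}f(T^n x)>\eta\}
\]
still satisfies $\mu(E_{f,K,\eta})>1-\tfrac1M+\delta_0$.

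Next I would compare $f$ to a perturbation $g$ of small $L^1$-norm. For $x\in E_{f,K,\eta}$ with witness index $i\in[M,K]$ one has
\[
\setA_{[N_i]}g(T^nx)=\setA_{[N_i]}f(T^nx)-\setA_{[N_i]}(f-g)(T^nx)>0
\]
unless $|\setA_{[N_i]}(f-g)(T^nx)|\ge \eta$. Since $T$ is measure-preserving, each operator $\setA_{[N_i]}^T$ is an $L^1$-contraction, so Markov's inequality together with a union bound over $i\in[M,K]$ yields
\[
\mu\!\Big\{x:\max_{M\le i\le K}|\setA_{[N_i]}(f-g)(T^nx)|\ge\eta\Big\}\le (K-M+1)\,\frac{\|f-g\|_1}{\eta}.
\]
Choosing any $\delta<\eta\delta_0/(K-M+1)$ drives the right-hand side below $\delta_0$ whenever $\|f-g\|_1<\delta$, and therefore $\mu\{x:\setA_{[N_i]}g(T^nx)>0\text{ for some }i\in[M,K]\}>1-\tfrac1M$, which places $g$ in $S_M^+$.

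The technical ingredients---continuity of measure, $L^1$-contractivity of the ergodic averages, and Markov's inequality---are all standard, so no step is a genuine obstacle. The only things worth handling carefully are the order of the choices ($K$ before $\eta$ before $\delta$) and the bookkeeping of the three buffer amounts $3\delta_0, 2\delta_0, \delta_0$, so that each of the three approximation steps (truncation to a finite range of $i$, strengthening strict positivity to a positive gap, and $L^1$-perturbation) costs at most $\delta_0$ of measure and leaves enough margin at the end.
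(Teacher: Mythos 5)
Your proof is correct and follows essentially the same route as the paper's: truncate the condition ``some $i\ge M$'' to a finite range of indices by continuity of measure, strengthen strict positivity to a quantitative threshold, and then control the perturbation on all those finitely many indices via Markov's inequality (using that the averaging operators are $L^1$-contractions) together with a union bound. The only differences are cosmetic bookkeeping (your three buffers $3\delta_0,2\delta_0,\delta_0$ versus the paper's single margin $\rho-(1-\tfrac1M)$ split in halves).
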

\begin{proof}
Let $f\in S_M^{+}$. Choose $\rho>0$ satisfying $\mu(D)>\rho>1-\frac{1}{M}$, where
\begin{equation*}
D=\{x:\setA_{ [N_i]}f(T^n x)>0 \text{ for some } i\geq M\}.
\end{equation*}
Define 
\begin{equation*}
D_L:= \{x:\setA_{ [N_i]}f(T^n x)>0 \text{ for some } i \text{ with }L\geq i\geq M\}.
\end{equation*}
Observe that $D_L \uparrow D$. Hence, there exists $L_0$ such that $\mu(D_{L_0})>\rho.$
Similarly, define
\begin{equation*}
D_{L_0}^{\delta}:= \{x:\setA_{ [N_i]}f(T^n x)>\delta \text{ for some } L_0\geq i\geq M\}.
\end{equation*}
Observe that as $\delta \downarrow 0$, $D_{L_0}^{\delta} \uparrow D_{L_0}.$ Hence, there exists $\delta_0>0$ such that $\mu(D_{L_0}^{\delta_0})>\rho$.
Choose $$\epsilon= \frac{1}{4}\Big(\rho-\big(1-\frac{1}{M}\big)\Big)\frac{\delta_0}{(M-{L_0})}.$$ We claim that whenever $\|f-g\|_1<\epsilon$, $g\in S_M^{+}$. Let $h= g-f$. By Markov's inequality, for every $i\in \setN$ we have 
\begin{equation*}
\mu\big\{x: \setA_{ [N_i]}h(T^n x)<-\frac{\delta_0}{2}\big\}<\frac{2\|h\|_1}{\delta_0}.
\end{equation*}
Hence, 
\begin{equation*}
\mu \Big\{x: \setA_{ [N_i]}h(T^n x)<-\frac{\delta_0}{2} \text{ for some }i \text{ with }L_0\geq i \geq M \Big\}< \frac{2\|h\|_1 (M-{L_0})}{\delta_0}
\end{equation*}
Since $\|h\|_1<\epsilon$, we must have
\begin{equation*}
\mu \Big\{x: \setA_{ [N_i]}h(T^n x)<-\frac{\delta_0}{2} \text{ for some }i \text{ with }L_0\geq i \geq M \Big\}< \frac{1}{2}\Big(\rho-\big(1-\frac{1}{M}\big)\Big)
\end{equation*}
This implies that
\begin{equation*}
\mu \Big\{x: \setA_{ [N_i]}g(T^n x)>\frac{\delta_0}{2} \text{ for some }L_0\geq i \geq M \Big\}>\rho- \frac{1}{2}\big(\rho-(1-\frac{1}{M})\big).
\end{equation*}
By our choice of $\rho$, $\rho- \frac{1}{2}\Big(\rho-\big(1-\frac{1}{M}\big)\Big)>\big(1-\frac{1}{M}\big)$. Thus, we have \begin{equation*}
\mu \Big\{x: \setA_{ [N_i]}g(T^n x)>0 \text{ for some } i \geq M \Big\}> \Big(\rho-\big(1-\frac{1}{M}\big)\Big).
\end{equation*}
This completes the proof of the proposition.
\end{proof}
\begin{prop}\label{prop:osc2}
$S_M^+$ is dense in $L^1$.
\end{prop}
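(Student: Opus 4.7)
The plan is a density-by-perturbation argument: given $f\in L^1$ and $\epsilon>0$, I will construct $g\in S_M^+$ with $\|g-f\|_1<\epsilon$ by adding to $f$ a tall-but-narrow bump $c\setone_E$, where $E$ has small measure but whose Cesàro averages, along a suitable subsequence of $(N_i)$, oscillate maximally. The salient input — flagged as such in the statement of \Cref{thm:generic fluctuation} — is the strong sweeping out property for lacunary sequences, in the form applied to Cesàro averages whose lengths form a lacunary sequence.

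First I would thin $(N_i)_{i\geq M}$ to a lacunary subsequence $(N_{i_k})_{k\geq 1}$ with $i_k\geq M$ and $N_{i_{k+1}}\geq 2 N_{i_k}$, which is possible because $(N_i)$ is unbounded. Next, by the weak $(1,1)$ maximal ergodic inequality (equivalently, Egorov applied to the pointwise ergodic theorem), I would pick a constant $C_0 = C_0(f,M)$ and a set $X_0$ with $\mu(X_0) > 1 - \tfrac{1}{2M}$ on which $\sup_k |\setA_{[N_{i_k}]} f(T^n x)| \leq C_0$. Then I would invoke the strong sweeping out property for the lacunary length sequence $(N_{i_k})$ to produce, for any preassigned $\epsilon' > 0$, a set $E$ with $\mu(E) < \epsilon'$ and $\limsup_{k\to\infty} \setA_{[N_{i_k}]}\setone_E(T^n x) = 1$ for a.e.\ $x$.

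Setting $c:=2C_0+1$ and $\epsilon' := \epsilon/c$, I would define $g := f + c\setone_E$. The norm bound $\|g - f\|_1 = c\mu(E) < \epsilon$ is immediate. To check $g \in S_M^+$: for a.e.\ $x \in X_0$ there are infinitely many $k$ with $\setA_{[N_{i_k}]}\setone_E(T^n x) > 1/2$, and for each such $k$,
\[
\setA_{[N_{i_k}]} g(T^n x) = \setA_{[N_{i_k}]} f(T^n x) + c\,\setA_{[N_{i_k}]}\setone_E(T^n x) \geq -C_0 + \tfrac{c}{2} = \tfrac{1}{2} > 0.
\]
Since $i_k \geq M$, this places $x$ in $\{y : \setA_{[N_i]} g(T^n y) > 0 \text{ for some } i \geq M\}$, a set whose measure is therefore at least $\mu(X_0) > 1 - \tfrac{1}{2M} > 1 - \tfrac{1}{M}$, so $g \in S_M^+$.

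The main obstacle is procuring sweeping out in the form actually needed: what the argument requires is that the \emph{length-lacunary} Cesàro operators $f \mapsto \setA_{[N_{i_k}]} f(T^n x)$ admit a small set $E$ with $\limsup_k \setA_{[N_{i_k}]}\setone_E(T^n x) = 1$ a.e.; \Cref{lem:sso} as recorded in the excerpt is stated for lacunary \emph{exponents}, so the proof must appeal instead to the length-lacunary version available from \cite{ABJLRW}. Once that input is in hand, the Egorov step and the perturbation computation above are routine.
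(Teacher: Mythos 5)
There is a genuine gap, and it sits exactly at the step you flagged: the sweeping-out input you want does not exist and cannot exist. You ask for a set $E$ of small measure with $\limsup_k \setA_{[N_{i_k}]}\setone_E(T^n x)=1$ a.e., where $\setA_{[N]}\setone_E(T^n x)=\frac{1}{N}\sum_{n\in[N]}\setone_E(T^n x)$ is the ordinary Birkhoff average evaluated at the lacunary lengths $N_{i_k}$. But by the pointwise ergodic theorem the full sequence $\setA_{[N]}\setone_E(T^n x)$ converges a.e.\ to $\mu(E)$, hence so does every subsequence; thus $\limsup_k \setA_{[N_{i_k}]}\setone_E(T^n x)=\mu(E)<\epsilon'$ a.e., never $1$. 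The results of \cite{ABJLRW} quoted in \Cref{lem:sso} concern averages along lacunary \emph{exponents}, $\frac{1}{K}\sum_{k\in[K]}\setone_E(T^{a_k}x)$ with $(a_k)$ lacunary; a ``length-lacunary'' version for the operators $\setA_{[N_{i_k}]}$ is impossible precisely because those operators converge a.e.\ on all of $L^1$. Consequently the perturbation $g=f+c\setone_E$ cannot do the job: a.e.\ the bump changes the $N_i$-th average by $c\,\setA_{[N_i]}\setone_E(T^n x)\to c\,\mu(E)<\epsilon$, and by the maximal ergodic theorem the set where $\sup_i\setA_{[N_i]}\setone_E(T^n x)>\tfrac12$ has measure of order $\mu(E)$, so your positivity estimate holds only on a tiny set, not on a set of measure $>1-\tfrac1M$. (Incidentally, with the threshold $0$ read literally, any $g$ with $\int g>0$ already lies in every $S_M^+$ by Birkhoff, so density would be trivial; for \Cref{thm:generic fluctuation} one must in effect fluctuate around the mean, i.e.\ work with mean-zero perturbations, which your bump is not.)

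The paper's mechanism is different in a way that matters. It approximates by a coboundary $g=h\circ T-h$ with $h\in L^\infty$, $\|h\|_\infty=K$, and perturbs the \emph{transfer function}, setting $\tilde h=h+3K\setone_E$ and $f=\tilde h\circ T-\tilde h$. The averages then telescope, $\setA_{[N_i]}f(T^n x)=\frac{\tilde h(T^{N_i+1}x)-\tilde h(Tx)}{N_i}$, so the influence of $E$ on the $N_i$-th average is governed by whether the single orbit point $T^{N_i+1}x$ lies in $E$, not by the Cesàro frequency of visits to $E$; one hit at time $N_i+1$ already makes the numerator at least $3K-2K=K>0$. To guarantee that a.e.\ $x$ has $T^{N_i+1}x\in E$ for some $i\ge M$ even though $\mu(E)$ is tiny, the paper thins $(N_i)$ to be lacunary and applies \Cref{lem:sso} to the exponent sequence $(N_i+1)$: since $\limsup_i\frac{1}{i}\sum_{j\le i}\setone_E(T^{N_j+1}x)=1$ a.e., there are infinitely many such hits. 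That is the legitimate use of strong sweeping out here, and it is the idea your proposal is missing; your Egorov/maximal-function step and the $L^1$ bookkeeping are fine but cannot compensate for it.
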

\begin{proof}
 It will be sufficient to show that $S_M^+$ is dense in $L^1_0$, where $L^1_0$ denotes the space of $L^1$ functions with mean $0$. Since $T$ is ergodic, coboundaries are dense in $L^1_0$. Furthermore, $L^\infty$ is dense in $L^1$. Hence, the proof reduces to showing that $S_M^+$ is dense in $\mathcal{F}$, where
\begin{equation*}
\mathcal{F}:=\Big\{g: g=h\circ T-h \text{ for some } h\in L^\infty \Big\}.
\end{equation*}
Let $\epsilon>0$, $M\in \mathbb N$, and $ g=(h\circ T-h)\in \mathcal{F} $ be arbitrary.
 Assume that $0<\epsilon<\frac{1}{M}$ and $\|h\|_\infty=K$.
\begin{lem}{}{}
Let $E, \epsilon, K, M, (N_i)$ be as in the above proposition. Then there exists a set $E$ such that $\mu(E)<\frac{\epsilon}{6K}$ and a set $F$ with $\mu(F)=1$ such that for every $x\in F$, there exists $i\geq M$ such that $T^{N_i+1}x\in E$.
\end{lem}
\begin{proof}
By deleting some terms if needed, we can assume that $(N_i)$ satisfies $\frac{N_{i+1}}{N_i}>2$. Then by \Cref{lem:sso}, there exists a set $E$ with $\mu(E)<\frac{\epsilon}{6K}$ such that 
\begin{equation*}
\limsup_i \dfrac{\setone_E(T^{N_1+1} x)+\setone_E (T^{N_2+1}x)+\dots \setone_E(T^{N_i+1}x)}{i}=1
\end{equation*} holds
for almost every $x\in X$. This gives our desired conclusion.
\end{proof}
Now, we want to construct a function $f$ such that $\|f-g\|_1<\epsilon$ and $f\in S_M^+$.\\
Define $f=\tilde{h}\circ T-\tilde{h}$ where $\tilde{h}=h+3K \setone_E$.
Observe that
\begin{align}
\int |h-\tilde{h}|&=\int_{E} |h-\tilde{h}|\leq 3K\frac{\epsilon}{6K}=\frac{\epsilon}{2}. \\
\text{Similarly,}
\int |h\circ T-\tilde{h}\circ T|&=\int_{{T^{-1}E}} |h-\tilde{h}|\leq 3K\frac{\epsilon}{6K}=\frac{\epsilon}{2}.
\text{ Hence }
\|f-g\|_1&<\epsilon.
\end{align}
To check that $f\in S_M^+$, let $x\in (E^c\cap F)$. Then
\begin{align}
\setA_{[N_i]} f(T^n x)&=  \dfrac{\tilde{h} (T^{N_i+1}x)-\tilde{h}(x)}{N_{i}}\\
&=\dfrac{\Big(  h(T^{N_i+1}x)-h(x)\Big)}{N_i}+3K \dfrac{\setone_E(T^{N_i+1}x)-\setone_E(x)}{N_i}
\end{align}
 Since $x\in E^c$, we get
 \begin{align}
\setA_{[N_i]} f(T^n x)=\dfrac{\Big( h(T^{N_i+1}x)-h(x)\Big)+3K \setone_{E}(T^{N_i+1}x)}{N_i}\label{eq:115}
\end{align}
By the above lemma, there exists $i_0>M$ such that $T^{N_{i_0}+1}x\in E$ i.e. $\setone_E(T^{N_{i_0}+1}x)=1$. We want to show that for this $i_0$, the expression in \eqref{eq:115} is positive. Since $\|h\|_\infty=K,$ we must have $ \Big( h(T^{N_{i_0}+1}x)-h(x)\Big)\leq 2K $. This implies that the expression in \eqref{eq:115} is at least $\dfrac{K}{N_i}$ and hence positive. Thus, we have
\begin{equation*}
\mu\Big\{x:\setA_{ [N_i]}f(T^n x)>0 \text{ for some } i\geq M\Big\}\geq \mu(E^c)\geq 1-\epsilon\geq 1-\frac{1}{M}.
\end{equation*}
Hence, we conclude that $f\in S_M^+$, thereby finishing the proof of the theorem.
\end{proof} 
\end{proof}

The following theorem provides a sufficient condition for non-monotonicity of the averages in terms of the complete recurrence property.
\begin{thm}\label{thm:6} 
Let $T$ be a totally ergodic transformation and $(N_i)$ be an increasing sequence of positive integers that admits a subsequence $(N_{i_j})$ such that
\begin{enumerate}
    \item $(N_{i_j+1}-N_{i_j})=K$ for some fixed constant $K$.
    \item $(N_{i_j})$ satisfies the complete recurrence property.
\end{enumerate} 
Then for any non-constant, integrable $f$, $\setA_{[N_{i}]}f(T^{n}x)$ is non-monotone.
\end{thm}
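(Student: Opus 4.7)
The plan is to exploit the fixed gap $K$ between the consecutive indices $i_j$ and $i_j+1$ to rewrite the one-step difference of averages as a local quantity, and then use the complete recurrence of $(N_{i_j})$ to force this difference to change sign infinitely often along the indices $i_j$. Replacing $f$ by $f - \int f$, we may assume $\int f = 0$, since adding a constant leaves every difference $\setA_{[N_{i+1}]}f(T^n x) - \setA_{[N_i]}f(T^n x)$ unchanged. A direct calculation (splitting the sum $\sum_{n=1}^{N+K}$ as $\sum_{n=1}^N + \sum_{n=N+1}^{N+K}$) then yields the identity
\begin{equation*}
\setA_{[N+K]}f(T^n x) - \setA_{[N]}f(T^n x) = \frac{K}{N+K}\bigl(g(T^N x) - \setA_{[N]}f(T^n x)\bigr),
\end{equation*}
where $g := \setA_{[K]}^T f$.

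Since $T$ is totally ergodic and $f$ is non-constant, \Cref{cor:nonconstant} guarantees that $g$ is non-constant; combined with $\int g = \int f = 0$ this produces $\delta > 0$ and sets $A, B$ of positive measure with $g > \delta$ on $A$ and $g < -\delta$ on $B$. By the complete recurrence property of $(N_{i_j})$, for every $L\ge 1$ both $\bigcup_{j \geq L} T^{-N_{i_j}} A$ and $\bigcup_{j \geq L} T^{-N_{i_j}} B$ have full measure. Intersecting over $L \in \setN$, for a.e. $x$ there are infinitely many $j$ with $T^{N_{i_j}} x \in A$ and infinitely many (different) $j$ with $T^{N_{i_j}} x \in B$. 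The pointwise ergodic theorem simultaneously gives $\setA_{[N_{i_j}]} f(T^n x) \to 0$ for a.e. $x$, so eventually $|\setA_{[N_{i_j}]} f(T^n x)| < \delta/2$.

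Substituting $N = N_{i_j}$ into the identity (so that $N+K = N_{i_j+1}$), the bracketed factor $g(T^{N_{i_j}} x) - \setA_{[N_{i_j}]} f(T^n x)$ exceeds $\delta/2$ whenever $T^{N_{i_j}} x \in A$ and is less than $-\delta/2$ whenever $T^{N_{i_j}} x \in B$. Hence for a.e. $x$ the sequence $\bigl(\setA_{[N_{i+1}]} f(T^n x) - \setA_{[N_i]} f(T^n x)\bigr)_i$ is positive at indices $i = i_j$ for arbitrarily large $j$ and negative at infinitely many other such indices, which precludes eventual monotonicity of $\bigl(\setA_{[N_i]} f(T^n x)\bigr)_i$ on any set of positive measure. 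The main conceptual point, and the only subtlety, is that the sign flips must occur between \emph{consecutive} terms of the original sequence $(N_i)$, not merely within the subsequence $(N_{i_j})$; the hypothesis $N_{i_j+1} - N_{i_j} = K$ is engineered exactly to guarantee this, while the uniformity of $K$ across $j$ is what allows the single non-constant function $g = \setA_{[K]}^T f$ to drive every comparison.
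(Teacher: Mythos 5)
Your proposal is correct and follows essentially the same route as the paper: the same consecutive-difference identity (the paper writes it as $\bigl(\setA_{[N_{i_j}+1,N_{i_j+1}]}f(T^n x)-\setA_{[N_{i_j}]}f(T^n x)\bigr)\tfrac{K}{N_{i_j+1}}$ with $\setA_{[N_{i_j}+1,N_{i_j+1}]}f(T^n x)=g(T^{N_{i_j}}x)$), the same use of \Cref{cor:nonconstant} to make $g=\setA_{[K]}^Tf$ non-constant and mean zero, and the same combination of complete recurrence of $(N_{i_j})$ with the pointwise ergodic theorem to force sign changes between consecutive terms of $(N_i)$.
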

\begin{proof}
Without loss of generality, let us assume that $f\in L^1_0$, that is, $\int f=0$.\\
One can check that
\begin{align*}
\setA_{[N_{i_j+1}]}f(T^n x)-\setA_{[N_{i_j}]}f(T^n x)&=\Big(\setA_{[N_{i_j}+1,N_{i_j+1}]}f(T^n x)-\setA_{[N_{i_j}]}f(T^n x)\Big)\dfrac{(N_{i_j+1}-N_{i_j})}{N_{i_j+1}}.
\end{align*}
Since $\setA_{[N_{i_j}]}f(T^n x)\to 0$ for almost every $x$, it will be sufficient to show that there exists $\delta>0$ such that for a.e. $x$, $\setA_{[N_{i_j}+1,N_{i_j+1}]}f(T^n x)>\delta$ for infinitely many $j$ and $\setA_{[N_{i_l}+1,N_{i_l+1}]}f(T^n x)<-\delta$ for infinitely many $l$.\\
Define
\begin{equation*}
g(x)=\setA_{[K]}f(T^n x).
 \end{equation*}
Since  $(N_{i_j+1}-N_{i_j})=K$ for every $j$,
\begin{equation*}
\setA_{[N_{i_j}+1,N_{i_j+1}]}f(T^n x)= g(T^{N_{i_j}}x).
\end{equation*}
This suggests that we should analyze the function $g$.\\
Since $f$ is non-constant and $T$ is totally ergodic, by \Cref{cor:nonconstant}, $g$ is also a non-constant function. Since  $f\in L^1_0 $, we must have $g\in L^1_0$. Hence, there exists $\delta>0$ such that both $\mu(S_0^+)$ and $\mu(S_0^-)$ are positive, where
\begin{align*}
S_0^+= \{x:g(x)>\delta\},\text{ and }\  S_0^-= \{x:g(x)<-\delta\}.
\end{align*}
Since $(N_{i_j})$ satisfies the complete recurrence property, for a.e. $x$, there are infinitely many $j$ and $l$ such that $g(T^{N_{i_j}}x)>\delta$ and $g(T^{N_{i_l}}x)<-\delta$.
This finishes the proof.
\end{proof}
\subsection{Irrational rotation case}
Let us prove some corollaries of \Cref{thm:6}.
\begin{cor}
If $(X,\mathcal{B},\mu,T)$ is a totally ergodic transformation and $(N_i)$ is a sequence of positive integers that contains an infinite arithmetic progression, then for any non-constant, integrable $f$ , $\setA_{[N_{i}]}f(T^{n}x)$ is non-monotone.
\end{cor}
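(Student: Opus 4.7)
The plan is to apply \Cref{thm:6} directly, using the arithmetic progression itself as the subsequence $(N_{i_j})$. Let $\{a+jK : j\ge 0\}$ denote the infinite arithmetic progression contained in $(N_i)$, with common difference $K\ge 1$. My first step is to extract the subsequence $(N_{i_j})_j$ of $(N_i)$ whose terms are exactly these AP elements in increasing order, so that $N_{i_j}=a+jK$ and hence $N_{i_{j+1}}-N_{i_j}=K$ for every $j$. This immediately verifies condition (1) of \Cref{thm:6}.

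The only substantive step is condition (2): that the AP $(a+jK)_j$ satisfies the complete recurrence property for the system $(X,\mathcal{B},\mu,T)$. For this I would invoke \Cref{lem:completerecurrence}. Given any $h\in L^\infty$, set $y=T^a x$; since $T^a$ preserves $\mu$, we have
\begin{equation*}
\frac{1}{J}\sum_{j\in [J]} h(T^{a+jK}x) = \frac{1}{J}\sum_{j\in [J]} h\bigl((T^K)^j y\bigr).
\end{equation*}
Because $T$ is totally ergodic, $T^K$ is itself ergodic, and so the $L^2$ mean ergodic theorem applied to $T^K$ shows that the right-hand side converges in $L^2$ to $\int h\,d\mu$ as $J\to\infty$. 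Translating back via $y=T^a x$, which is an isometry of $L^2$, the same $L^2$ convergence holds for the left-hand side. Thus hypothesis \eqref{eq:complete} of \Cref{lem:completerecurrence} is met, and we conclude that $(a+jK)_j$ satisfies the complete recurrence property for $(X,\mathcal{B},\mu,T)$, which is condition (2) of \Cref{thm:6}.

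With both hypotheses of \Cref{thm:6} verified for the subsequence $(N_{i_j})$, the theorem yields the non-monotonicity of $\setA_{[N_i]}f(T^n x)$ for any non-constant integrable $f$. I do not anticipate any real obstacle: the argument is essentially the standard observation that total ergodicity implies ergodicity of every iterate $T^K$, packaged with the bookkeeping needed to fit the framework of \Cref{thm:6}; the only minor point to keep in mind is the harmless change of variables $y=T^a x$ used to handle the starting offset $a$ of the AP.
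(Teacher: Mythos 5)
Your proof is correct and is essentially the paper's own (the paper simply states that the corollary is immediate from \Cref{thm:6}), with the hypothesis checks filled in exactly as intended: the arithmetic progression serves as the subsequence with constant gap $K$, and its complete recurrence for the given system follows from \Cref{lem:completerecurrence} because total ergodicity makes $T^K$ ergodic, so the $L^2$ mean ergodic theorem applies after the harmless shift by $T^a$. No gaps.
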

The proof is immediate from Theorem~\ref{thm:6}.

For the next corollaries, we need the following lemmas.
\begin{lem}\label{lem:dense} 
Let $E\subset \setT$ such that $\lambda(E)>0$ and $(x_j)$ is a sequence that is dense in $\setT$, then $\lambda\Big(\cup_{j\in \setN}(x_j+E)\Big)=1$.  
\end{lem}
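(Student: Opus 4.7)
The plan is to work with the complement $G := \setT \setminus \bigcup_{j \in \setN}(x_j + E)$ and show that $\lambda(G) = 0$. By construction of $G$, for every $j$ we have $G \cap (x_j + E) = \emptyset$, and in particular $\lambda\bigl(G \cap (x_j + E)\bigr) = 0$.

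The key step is to introduce the function
\[
h(z) := \lambda\bigl(G \cap (z + E)\bigr) = \int_{\setT} \setone_G(w)\, \setone_E(w - z)\, d\lambda(w), \qquad z \in \setT,
\]
and to observe that $h$ is continuous on $\setT$. Indeed, for any $z, z' \in \setT$,
\[
|h(z) - h(z')| \le \int_{\setT} \bigl|\setone_E(w-z) - \setone_E(w-z')\bigr|\, d\lambda(w) = \bigl\|\setone_E - \tau_{z-z'}\setone_E\bigr\|_{L^1(\setT)},
\]
which tends to $0$ as $z \to z'$ by the standard $L^1$-continuity of translation. Moreover, Fubini's theorem yields
\[
\int_\setT h(z)\, d\lambda(z) = \lambda(G)\,\lambda(E).
\]

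Now I would argue by contradiction: suppose $\lambda(G) > 0$. Combined with the hypothesis $\lambda(E) > 0$, the above integral is strictly positive, so the continuous non-negative function $h$ must be strictly positive on some non-empty open set $U \subseteq \setT$. By density of $(x_j)$ in $\setT$, there exists some $j$ with $x_j \in U$, and then $\lambda\bigl(G \cap (x_j + E)\bigr) = h(x_j) > 0$, contradicting the defining property of $G$. Hence $\lambda(G) = 0$, i.e.\ $\lambda\bigl(\bigcup_j (x_j + E)\bigr) = 1$.

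The main (and essentially only) technical point is the continuity of $h$, which is a one-line consequence of $L^1$-continuity of translation. Once that is in hand, the density hypothesis on $(x_j)$ converts the positive integral of $h$ directly into the desired contradiction, and the rest is bookkeeping.
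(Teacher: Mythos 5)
Your proof is correct, and it takes a genuinely different route from the paper. You study the ``correlation'' function $h(z)=\lambda\bigl(G\cap(z+E)\bigr)$ (essentially the convolution $\setone_G\star\setone_{-E}$), establish its continuity from the $L^1$-continuity of translation, compute $\int_\setT h\,d\lambda=\lambda(G)\lambda(E)$ by Fubini, and then let the density of $(x_j)$ hit the open set where $h>0$; this is a clean, fully rigorous argument, and in fact it reproves in this special case the standard fact that $G+(-E)$ (or here the set of $z$ with $\lambda(G\cap(z+E))>0$) contains an open set when both sets have positive measure. The paper instead argues more ``by hand'' with the Lebesgue density theorem: it picks an interval $I$ on which $E$ has density greater than $1/2$ and an interval $J$ of the same length on which the complement $F^c$ of the union has density greater than $1/2$, uses density of $(x_j)$ to translate $I$ onto $J$, and derives the contradiction $\lambda(F^c\cap G\cap J)=0$ against two sets each occupying more than half of $J$. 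The paper's approach uses only the density theorem and no integration-theoretic machinery, but its limiting step ($\lim_k(I+x_{j_k})=J$ and the set $G=\lim_k (E\cap I)+x_{j_k}$) is stated somewhat informally and needs a small continuity-of-measure argument to be made precise; your version makes exactly that continuity explicit through $\|\setone_E-\tau_t\setone_E\|_1\to0$, at the mild cost of invoking Fubini and $L^1$-continuity of translation. Both are standard and complete; yours is arguably the tidier write-up.
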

\begin{proof}
Denote $F= \cup_j (E+x_j)$.  If we had  $\lambda(F)<1$, then $\lambda(F^c)>0$.  Let $I$ be an interval in which the density of $E$ is greater than $1/2$, that is, $\lambda(E\cap I)>1/2\lambda(I)$.  Let $J$ be an interval in which the density of $F^c$ is greater than $1/2$.  By the Lebesgue density theorem we can assume that $I$ and $J$ have the same length.  Because of the density of $x_j$, there is a subsequence $j_k$ so that $\lim_k (I+x_{j_k})=J$.  Let $G=\lim_k (E\cap I)+x_{j_k}$.  Then $\lambda(G\cap J)>1/2\lambda(J)$ and also $\lambda(F^c\cap J)>1/2\lambda(J)$.  But $\lambda(F^c\cap G\cap J)=0$, an impossibility. 
\end{proof}
\begin{cor}{}{}
Let $p_i$ denote the $i$-th prime. There exists a set $F\subset \setT$ with $\lambda(F)=1$ that satisfies the following:\\
For each $\alpha\in F$ and non-constant $f\in L^1$, $\setA_{ [p_i]}f(x+ n\alpha)$ is non-monotone.
\end{cor}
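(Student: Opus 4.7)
The plan is to apply \Cref{thm:6} with the totally ergodic rotation $T=R_\alpha$ and the sequence $N_i=p_i$. Since every irrational rotation is totally ergodic, the only two ingredients to verify are (i) the existence of a subsequence of the primes whose consecutive gap (in the original ordering) is a fixed constant $K$, and (ii) the complete recurrence property of this subsequence for $R_\alpha$.

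For (i), I will invoke the Zhang-Maynard-Tao bounded gaps theorem: $\liminf_{n\to\infty}(p_{n+1}-p_n)<\infty$. Since infinitely many consecutive prime gaps thereby lie in the finite set $\{2,4,\dots,B\}$ for some $B$, the pigeonhole principle produces an even $K\le B$ and a strictly increasing sequence of indices $(i_j)$ with $p_{i_j+1}-p_{i_j}=K$ for every $j$. This is hypothesis (1) of \Cref{thm:6}.

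For (ii), I will construct a full-measure set $F\subset\setT$ of irrationals for which $(p_{i_j}\alpha\bmod 1)_j$ is dense in $\setT$. The tool is the classical a.e.\ equidistribution theorem: for any strictly increasing integer sequence $(m_j)$, Weyl's criterion together with the computation
\[
\int_{\setT}\Bigl|\tfrac{1}{N}\sum_{j=1}^{N} e^{2\pi i h m_j\alpha}\Bigr|^{2} d\alpha = \tfrac{1}{N}
\]
for every nonzero $h\in\setZ$, followed by a Borel-Cantelli argument along a sufficiently sparse subsequence of $N$'s working simultaneously for all $h$, yields full-measure equidistribution (and hence density). For any $\alpha\in F$, any positive-measure set $E\subset\setT$, and any $L\ge 1$, the tail $(-p_{i_j}\alpha)_{j\ge L}$ is still dense in $\setT$, so \Cref{lem:dense} gives $\lambda\bigl(\bigcup_{j\ge L}(E-p_{i_j}\alpha)\bigr)=1$. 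This is exactly the complete recurrence property of $(p_{i_j})$ for $R_\alpha$, so hypothesis (2) holds.

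Combining (i) and (ii), \Cref{thm:6} delivers the non-monotonicity of $\bigl(\setA_{[p_i]}f(x+n\alpha)\bigr)_i$ for every $\alpha\in F$ and every non-constant $f\in L^1(\setT)$, as desired. The main obstacle is clearly (i): without the Zhang-Maynard-Tao bounded gaps machinery, one cannot guarantee that any single prime gap is attained infinitely often, and \Cref{thm:6} genuinely requires a subsequence on which that gap is constant, so this deep input is unavoidable in the proof route I am taking.
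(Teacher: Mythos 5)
Your proposal is correct and follows essentially the same route as the paper: Zhang's bounded-gaps theorem (with pigeonhole) to produce a subsequence of primes with a fixed consecutive gap $K$, a.e.\ equidistribution (Weyl) to get density of $(p_{i_j}\alpha)$ mod $1$ for a full-measure set of $\alpha$, and then \Cref{lem:dense} to verify the complete recurrence property so that \Cref{thm:6} applies. The only difference is that you spell out the standard $L^2$/Borel--Cantelli proof of the a.e.\ equidistribution statement that the paper simply cites as Weyl's theorem.
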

\begin{proof}
By a famous theorem of Zhang~\cite{Zhang}, we know that there exists a constant $k$ and a subsequence $(p_{i_j})$ of $(p_i)$ such that $p_{i_j+1}-p_{i_j}=k$. By Weyl's theorem, there exists a set $F$ with $\lambda(F)=1$ such that $(p_{i_j}\alpha)_j$ mod 1 is dense in $\setT$. Now, the corollary follows from \Cref{lem:dense} and \Cref{thm:6}.
\end{proof}
\begin{cor}\label{cor:syndetic} 
Let $(N_i)$ be an increasing sequence such that $\max_{i} (N_{i+1}-N_i)<+\infty$. Then there exists a set $F$ with $\lambda(F)=1$ that satisfies the following:\\
For each $\alpha\in F$ and non-constant $f\in L^1$, $\setA_{ [N_i]}f(x+ n\alpha)$ is non-monotone.
\end{cor}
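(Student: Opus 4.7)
The plan is to extract from $(N_i)$ a constant-gap subsequence that satisfies the complete recurrence property for the rotation $R_\alpha$ for a generic $\alpha$, and then apply \Cref{thm:6}.

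First, since $\max_i(N_{i+1}-N_i)<+\infty$, the set of gap values $\{N_{i+1}-N_i : i\in\setN\}$ is finite, so by the pigeonhole principle there exist a positive integer $K$ and a subsequence $(i_j)$ with $N_{i_j+1}-N_{i_j}=K$ for every $j$. This settles hypothesis~(1) of \Cref{thm:6} for the subsequence $(N_{i_j})$.

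Next I would invoke the standard consequence of Weyl's criterion: for any strictly increasing sequence of positive integers $(m_j)$, the sequence $(m_j\alpha)$ is equidistributed modulo $1$ for $\lambda$-a.e. $\alpha\in\setT$. (Sketch: the identity $\int_0^1 \bigl|\tfrac{1}{J}\sum_{j\le J}e^{2\pi ikm_j\alpha}\bigr|^2\,d\alpha = 1/J$ for each $k\neq 0$ gives a.e. convergence along $J_l=l^2$ by Borel--Cantelli, and a sandwich estimate passes this to the full sequence.) Applied to $(N_{i_j})$, this yields a set $F_0\subset\setT$ of full measure on which $(N_{i_j}\alpha)_j$ is equidistributed, and hence dense in $\setT$. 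I then set $F:=F_0\setminus\setQ$, so that $\lambda(F)=1$ and every $\alpha\in F$ is irrational, which makes $R_\alpha$ totally ergodic.

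Finally, I would verify hypothesis~(2) of \Cref{thm:6} for each fixed $\alpha\in F$: given a measurable $E\subset\setT$ with $\lambda(E)>0$ and $L\ge 1$, the tail sequence $(-N_{i_j}\alpha)_{j\ge L}$ is still dense in $\setT$ (density is preserved by deleting finitely many terms and by the negation map on $\setT$), so \Cref{lem:dense} gives $\lambda\bigl(\bigcup_{j\ge L}(E - N_{i_j}\alpha)\bigr)=1$, which is exactly \eqref{eq:recurrence} for the system $(\setT,\lambda,R_\alpha)$. Both hypotheses of \Cref{thm:6} then hold for this totally ergodic system, so for any non-constant $f\in L^1$ the averages $\setA_{[N_i]}f(x+n\alpha)$ are non-monotone for a.e. $x$. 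The only delicate step is the a.e.\ equidistribution of $(N_{i_j}\alpha)$ for an arbitrary strictly increasing integer subsequence; the pigeonhole extraction and the density-to-recurrence argument are routine.
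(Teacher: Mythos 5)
Your proposal is correct and is essentially the paper's intended argument: the paper proves this corollary by saying it is ``similar to the previous corollary,'' i.e.\ extract a constant-gap subsequence (here by pigeonhole on the finitely many gap values, in place of Zhang's theorem for the primes), use Weyl's theorem to get a full-measure set of $\alpha$ for which $(N_{i_j}\alpha)$ is dense mod $1$, deduce the complete recurrence property for $R_\alpha$ via \Cref{lem:dense}, and apply \Cref{thm:6}. Your additional care about discarding rational $\alpha$ for total ergodicity and about tails/negation preserving density only makes the sketch more complete.
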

The proof is similar to the previous corollary.

\medskip

Next, we will prove \Cref{thm:5}, which strengthens \Cref{cor:syndetic}. We require the following lemma for proving the theorem.

\begin{lem}\label{lem:Rosenblatt} 
 If a sequence $(N_i)$ of positive integers has positive lower density, then for all but countably many $\alpha$, $(N_i \alpha)_i$ (mod 1) is dense in $\mathbb{T}$.
\end{lem}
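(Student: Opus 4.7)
The plan is to establish the stronger property that $(N_i\alpha)_i$ is equidistributed modulo $1$ for all $\alpha$ outside a countable set, whence density follows immediately. By Weyl's criterion this amounts to proving
\[
\frac{1}{M}\sum_{i=1}^{M}e^{2\pi i k N_i\alpha}\xrightarrow[M\to\infty]{}0\qquad\text{for every }k\in\setZ\setminus\{0\},
\]
so I will construct a countable set $E\subset\setT$ such that this vanishing holds for every $\alpha\notin E$ and every such $k$.

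First I would pass to the indicator sequence $b_n=\setone_{\{N_i\}}(n)$ and extract, by a diagonal argument, a subsequence $(K_\ell)$ along which all autocorrelations $\gamma(m)=\lim_\ell\tfrac{1}{K_\ell}\sum_{n\le K_\ell}b_nb_{n+m}$ exist. Since $\gamma$ is positive definite, Bochner's theorem produces a unique nonnegative finite Borel measure $\sigma$ on $\setT$ with $\hat\sigma=\gamma$, and Wiener's lemma guarantees that the set of atoms $\Lambda\subset\setT$ of $\sigma$ is at most countable.

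Next, for $\beta\notin\Lambda$, a Fej\'er-type computation gives $\tfrac{1}{K_\ell^2}\bigl|\sum_{n\le K_\ell}b_n e^{-2\pi in\beta}\bigr|^2\to\sigma(\{\beta\})=0$ along the subsequence. Using the lower density bound $M_K:=|\{i:N_i\le K\}|\ge\tfrac{d}{2}K$ for all large $K$ (where $d>0$ is the lower density of $(N_i)$), one translates this into $\tfrac{1}{M}\sum_{i=1}^M e^{-2\pi i N_i\beta}\to 0$ as $M\to\infty$. Choosing $\beta=k\alpha$, the Weyl criterion at frequency $k$ then holds whenever $k\alpha\notin\Lambda$, and so the exceptional set $E:=\bigcup_{k\ne 0}k^{-1}\Lambda$ (viewed modulo $1$) is a countable union of countable sets, hence countable.

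The main obstacle is the technical passage from convergence along the subsequence $(K_\ell)$ to a genuine limit $M\to\infty$, together with the interpolation between the $n$-indexed sums for $b_n$ and the $i$-indexed exponential sums over $(N_i)$; both rely crucially on the positive lower density hypothesis to keep the ratio $M_K/K$ bounded away from $0$ and from $\infty$.
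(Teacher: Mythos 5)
Note first that the paper does not actually prove this lemma: its ``proof'' is a citation to Rosenblatt \cite{Ros_Norm} and Boshernitzan \cite[Proposition 8.1]{BOS2}. So any self-contained argument is a different route; the question is whether yours closes. It does not, and the gap is exactly the step you flag as ``the main obstacle.'' The Wiener/Fej\'er machinery only controls the exponential sums along the correlation subsequence $(K_\ell)$, and positive lower density cannot upgrade that to the full limit $M\to\infty$: the density hypothesis only lets you compare the $n$-indexed and $i$-indexed normalizations \emph{at the same times}; it says nothing about the times between $K_\ell$ and $K_{\ell+1}$, which may be astronomically far apart. Worse, the strengthened statement you route through is false: there are sets of positive lower density for which $(N_i\alpha)$ fails to be equidistributed for \emph{uncountably} many $\alpha$. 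Sketch: build $S$ in huge successive blocks, each block devoted to one node $\beta_w$ of a nested (Cantor-type) family of parameter intervals whose lengths are $\le 1/(100\cdot\text{block end})$, taking $S$ on that block to be $\{n:\{n\beta_w\}\in[0,1/2]\}$; the running density never drops much below $1/2$, but every $\alpha$ in the resulting uncountable Cantor set has, at infinitely many times, all but a vanishing proportion of its points $\{N_i\alpha\}$ inside a fixed neighborhood of $[0,1/2]$, so it is not equidistributed (though it is still dense, consistently with the lemma). A smaller inaccuracy: the Fej\'er/Cauchy--Schwarz computation gives only $\limsup_\ell K_\ell^{-2}\bigl|\sum_{n\le K_\ell}b_n e^{-2\pi in\beta}\bigr|^2\le\sigma(\{\beta\})$ along the defining subsequence, not convergence to $\sigma(\{\beta\})$ (equality can fail, e.g.\ for a sequence supported on alternating long blocks); fortunately the inequality is the direction you use at non-atoms.

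The spectral idea itself is salvageable, because density --- which is all the lemma asserts --- does not require the full limit. Refine $(K_\ell)$ once more so that $\#\{i:N_i\le K_\ell\}/K_\ell$ converges to $\gamma(0)\ge d>0$. Then for every $\alpha$ such that $k\alpha$ is not an atom of the single measure $\sigma$ for any $k\ne 0$ (a countable exceptional set), every nonzero Fourier coefficient of the empirical measures $\nu_\ell=\#\{i:N_i\le K_\ell\}^{-1}\sum_{N_i\le K_\ell}\delta_{\{N_i\alpha\}}$ tends to $0$, so $\nu_\ell\to\lambda$ weak-$*$; hence every arc receives a positive proportion of the points $\{N_i\alpha\}$ with $N_i\le K_\ell$ for large $\ell$, in particular infinitely many of them, and $(N_i\alpha)$ is dense. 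If you restructure the write-up so that the conclusion is drawn from subsequential equidistribution (dropping the false claim of genuine equidistribution off a countable set), you obtain an honest proof of the lemma, which is more than the paper itself supplies.
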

\begin{proof}
This was (independently) proved by Rosenblatt~\cite{Ros_Norm} and Boshernitzan ~\cite[Proposition 8.1]{BOS2}.
\end{proof}
\begin{proof}[Proof of \Cref{thm:5}]
Let $M$ be the maximum gap of $(N_i)$. For each $j\in [M]$, construct the set $S_j$ as follows:\\
If for any $i\in \setN$, $N_{2i}-N_{2i-1}=j$, then $N_{2i-1}\in S_j$.

Clearly, $S= \displaystyle \cdot\hspace{-13pt}\bigcup_{j\in [M]}\Big(S_j\cup (S_j+j)\Big)$. For $N_l\in S_j$ and $\alpha\in \setT$, we have
\begin{align}
\setA_{[N_{l+1}]}f(x+ n\alpha)-&\setA_{[N_{l}]}f(x+ n\alpha)\\
&=\Big(\setA_{[N_{l}+1,N_{l+1}]}f(x+ n\alpha)-\setA_{[N_{l}]}f(x+ n\alpha)\Big)\dfrac{(N_{l+1}-N_{l})}{N_{l+1}}\\
&=\Big(\setA_{[j]}f\big(x+({N_l+n})\alpha\big)-\setA_{[N_{l}]}f(x+ n\alpha)\Big)\dfrac{j}{N_{l+1}}.\label{eq:delta}
\end{align}

 For an $M$-tuple of positive integers $\mathsf{k}=(k_1,k_2,\dots,k_M)$ and $j\in [M]$, we consider 
\begin{align*}
\text{ the shifted sets 
 }S^{\mathsf{k}}_j=S_j+k_j
\text{ and their union  } S^{\mathsf{k}}= \cup_{j\in [M]}S^\mathsf{k}_j.
\end{align*}
Clearly, for every $\mathsf{k}$, $S^{\mathsf{k}}$ is a set of bounded gaps, and hence, it has positive lower density. But this implies, by \Cref{lem:Rosenblatt}, that $E^\mathsf{k}$ is countable, where 
\begin{align}
E^\mathsf{k}&=E(S^\mathsf{k})=\big\{\alpha\in \setT: \{\alpha s: s \in S^\mathsf{k}\} \text{ is not dense} \in \setT\big\}.
\end{align}
Hence, $E=\displaystyle\bigcup_{\mathsf{k}\in {\setN}^M} E^\mathsf{k}$ is also countable. Now, we claim that  if $\alpha\in \setT\setminus E$ and $f$ is a non-constant, integrable function, then for a.e. $x$, the averages $\setA_{ [N_i]}f(x+ n\alpha)$ are non-monotone.\\
For every $j\in [M]$, consider the function \(g_j(x)=\setA_{[j]}f(x+ n\alpha).\)

We know that for almost every $x$, $\setA_{[N_{l}]}f(x+ n\alpha) \to 0$ as $l\to \infty$. We will show that there is a $\delta>0$ such that for a.e. $x$, there are infinitely many $N_l\in S_j$ for some $j\in [M]$ satisfying $g_j(x+N_l \alpha)>\delta$ and infinitely many (different) $N_l\in S_w$ for some $w\in [M]$ satisfying $g_w(x+N_l \alpha)<-\delta$. If we can show this, then the expression in \eqref{eq:delta} will be $>0$ infinitely often and $<0$ infinitely often, which will finish the proof of the claim. We will show here the `$>$' part only (the other part is similar).

By using the total ergodicity of irrational rotation and \Cref{cor:nonconstant}, we can choose $\delta>0$ small enough so that for every $j\in [M]$, $\lambda(B_{j,0}^+)$ is positive, where
 \begin{equation*}
B_{j,0}^+= \{x:g_j(x)>\delta\}.
 \end{equation*}

 Again, by using the ergodicity of irrational rotation, we can find $k_1, k_2,\dots, k_M$ such that $\lambda(B)>0$ where 
 \begin{equation*}
B= \displaystyle\bigcap_{j\in [M]}\Big(k_j\alpha+B^{+}_{j,0}\Big).
 \end{equation*}

Fix $\mathsf{k}=(k_1,k_2,\dots,k_M)$. Let $(s^\mathsf{k}_n)_n$ be the sequence obtained by arranging the terms of  $S^\mathsf{k}$ in an increasing order.

Since $\alpha\in \setT\setminus E$, $(\alpha s^\mathsf{k}_n)_n$ is dense in $\setT$. Hence, by \Cref{lem:dense}, we have
 \begin{align*}
 \lambda\Big(\bigcup_{n\in \setN}(-\alpha s^\mathsf{k}_n+B\Big)=1.
 \end{align*}
 Hence, for a.e. $x\in \setT$ and every large $L\in \setN$, there exists $n\geq L$ such that $x\in (-\alpha s^\mathsf{k}_n+B)$ for some $n$. Let $s^\mathsf{k}_n\in S^{\mathsf{k}}_{j}$ for some $j\in [M]$, so $s^\mathsf{k}_n=s_{j}+k_{j}$ for some $s_j\in S_j$. This implies that $x\in -(s_{j}+k_{j})\alpha+B\subset -(s_{j}+k_{j})\alpha+( k_j\alpha+B^{+}_{j,0})= -s_{j}\alpha+B^{+}_{j,0}$. Hence, we get $g_{j}(x+ s_{j} \alpha)>\delta$, which is our desired conclusion.
\end{proof}
 \begin{rem}
\begin{enumerate}
  \item In view of \Cref{thm:6} and \Cref{lem:dense}, for an easier proof one would try to find a subsequence $(N_{i_j})$ of $(N_i)$ such that $(N_{i_j})$ has positive lower density and for some fixed constant $K$, it satisfies $N_{i_j+1}-N_{i_j}=K$ for every $j\in \setN$. However, in general, such a subsequence may not exist. For example, take a large block of 2-gap numbers, a large block of 3-gap numbers, another large block of 2-gap numbers, another large block of 3-gap numbers, and so on. While it is always possible to find a subsequence $(N_{i_j})$ of $(N_i)$ that has positive upper density and that satisfies $N_{i_j+1}-N_{i_j}=K$ for every $j\in \setN$ for some fixed constant $K$, \Cref{lem:dense} does not hold for a sequence of positive upper density (see \cite{BOS2} for a counterexample).
  
\item The conclusion of Theorem 1.9 may not hold for all $\alpha$. Indeed, take $(N_i)$ to be the increasing sequence which is obtained by arranging those $n\in \mathbb N$ which satisfy $n \alpha\in [\frac{1}{4},\frac{1}{2}]$ (mod $1$) and 
\begin{equation*}
 f(x):=\setone_{E}(x)-\setone_{E}(x+\alpha) \text{ where } E=\Big[\frac{1}{2},1\Big].  
\end{equation*}
With the above choice of $N_i$ and $\alpha$, one can check that for any $x\in [\frac{1}{4},\frac{1}{2}]$, we have $\setA_{ [0,N_{i+1})}f(x+ n\alpha)-\setA_{ [0,N_i)}f(x+ n\alpha)=\frac{1}{N_i}-\frac{1}{N_{i+1}}\geq 0$. Hence, there is monotonicity for a set of positive measure.  
\end{enumerate}
 \end{rem}
 \begin{prob}{}{}
Let $(X,\mathcal{B},\mu,T)$ be an ergodic system and $(N_i)$ be an increasing sequence of positive integers. Can we characterize those functions that fluctuate around their mean infinitely often?
 \end{prob}

\subsection{Some quantitative results}

In \Cref{prop:updowngap} we have seen a quantitative result between consecutive averages. Now we will investigate when the gap term $\frac dN$ can be made larger.   Take $w(N)$ that increases to $\infty$ as $N\to \infty$.  For a mean-zero $f\in L^1$, we want to see  what can be said about having $\setA_{[N+1]}f(T^n x) > \frac {w(N)}N + \setA_{[N]}f(T^n x)$ for a.e. $x$ and for infinitely many $N$. This condition clearly forces $w(N) = o(N)$ because a.e. $\setA_{[N]}f(T^n x) \to 0$ as $N\to \infty$.  

By our computation above, this inequality basically means that  $f(T^{N+1}x) > 2w(N)$ for infinitely many $N$.   So we cannot have a gap like this for $f\in L^\infty$.    But if $f\in L^1\setminus L^\infty$, then it is possible to have this occur. Similarly, at the same time, $\setA_{[N+1]}f(T^n x) < -\frac {w(N)}N + \setA_{[N]}f(T^n x)$ infinitely often occurs if $f(T^{N+1}x) \le -2w(N)$ for infinitely many $N$.  For unbounded $f\in L^1$ this is possible.  

The level $w(N)$ for which this can occur depends on $f$ and $T$.  For example, suppose we have $w(N) = o(N)$.  A related question is when can we have
$$\sum\limits_{N\in \setN} \mu\{x: f(x) > w(N)\}  = \infty?$$  It is likely that this occurs for a dense $G_\delta$ of functions in $L^1$.
But then also suppose we can take a  map $T$ such that $f\circ T^n$ are IID.  By the Borel-Cantelli Lemma, this will allow us to conclude that for a.e. $x$, $f(T^{N+1}x) > w(N)$ occurs for infinitely many $N$.  Finding such a $T$ may not work for $f$ on $X$, but we can certainly get many instances of the above by changing our measure space $X$ to $\prod\limits_{n\in \setZ}\{0,1\}$, the usual product measure $\prod\limits_{n\in \mathbb Z} \mu$, where $\mu(\{0\}) = \mu(\{1\}) = 1/2$, and letting $T$ be the shift map.  Then we take any $f\in L^1$ as above and replace it by $f_0 = f \circ \pi_0$ where $\pi_0$ is the projection of  $\prod\limits_{n\in \setZ}\{0,1\}$ onto the coordinate with $n=0$.  

The conclusion above is that when $w(N)$ increases to $\infty$ but $w(N)/N \to 0$ as $N\to \infty$, then there are many functions $f$ and ergodic maps $T$ such that for a.e. $x$, both $\setA_{[N+1]}f(T^n x) > \frac {w(N)}N + \setA_{[N]}f(T^n x)$ and 
$\setA_{[N+1]}f(T^n x) < -\frac {w(N)}N + \setA_{[N]}f(T^n x)$.  Indeed, this is the typical case.  We take here $\mathcal M$ to be the invertible, measure-preserving maps of $(X,\mathcal{B},\mu)$ given the usual weak topology.  

\begin{prop}\label{prop:shrink}   Suppose $w(N)$ is increasing to $\infty$ and $w(N)/N\to 0$ as $N\to \infty$.  Then there is a dense $G_\delta$ set $\mathcal G \subset  L^1$ such that for all $f\in \mathcal G$, there is a dense $G_\delta$ set $M(f)\subset \mathcal M$ of ergodic maps $T$ such that 
for a.e. $x$, we have both $\setA_{[N+1]}f(T^n x) >\frac {w(N)}N + \setA_{[N]}f(T^n x)$ infinitely often and
$\setA_{[N+1]}f(T^n x) < -\frac {w(N)}N + \setA_{[N]}f(T^n x)$ infinitely often.  
\end{prop}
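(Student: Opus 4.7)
My plan is a two-stage Baire category argument. By the computation in the proof of \Cref{prop:updowngap}, the inequality $\setA_{[N+1]}f(T^n x) > \frac{w(N)}{N} + \setA_{[N]}f(T^n x)$ is equivalent to $f(T^{N+1}x) > \frac{(N+1)w(N)}{N} + \setA_{[N]}f(T^n x)$, and since $\setA_{[N]}f(T^n x) \to c_f := \int f$ a.e.\ for ergodic $T$ and $w(N)/N \to 0$, it is enough to produce a dense $G_\delta$ set $\mathcal G \subset L^1$ and, for each $f \in \mathcal G$, a dense $G_\delta$ set of ergodic $T \in \mathcal M$ such that for a.e.\ $x$ one has both $f(T^{N+1}x) > c_f + w(N)$ infinitely often and $f(T^{N+1}x) < c_f - w(N)$ infinitely often.

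For the $L^1$ stage I would take $\mathcal G$ to be the set of $f$ with $\sum_N \mu\{f > c_f + w(N)\} = \infty$ and $\sum_N \mu\{f < c_f - w(N)\} = \infty$. Writing $\mathcal G = \bigcap_K (\mathcal G_K^+ \cap \mathcal G_K^-)$ with $\mathcal G_K^+ = \{f : \sum_{N=1}^M \mu\{f > c_f + w(N)\} > K \text{ for some } M\}$, each $\mathcal G_K^\pm$ is open in $L^1$ because $c_f$ depends continuously on $f$ and each finite partial sum is lower semicontinuous in $f$. For density, given $f_0$ and $\epsilon > 0$ I would perturb to $f_0 + M\setone_E$ with $\mu(E) = \delta$ small, $M\delta < \epsilon$, and $M$ so large that $w(N) < M/2$ for very many $N$; then $\mu\{f_0 + M\setone_E > c + w(N)\} \geq \delta/2$ over a long range of $N$, driving the partial sum past any prescribed $K$. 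The negative side is symmetric.

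Now fix $f \in \mathcal G$, set $E_N^\pm = \{\pm(f - c_f) > w(N)\}$, $p_N^\pm = \mu(E_N^\pm)$, and define
\[
\mathcal U_L^\pm = \bigcup_{K \geq L} \Big\{T \in \mathcal M : \mu\Big(\bigcup_{N=L}^K T^{-(N+1)} E_N^\pm\Big) > 1 - \tfrac{1}{L}\Big\}.
\]
For each fixed $K$, the function $T \mapsto \mu(\bigcup_{N=L}^K T^{-(N+1)} E_N^\pm)$ is continuous on $\mathcal M$ in the weak topology, since $T \mapsto T^{-m} E$ is continuous into $\mathcal B$ with the symmetric-difference metric; hence $\mathcal U_L^\pm$ is open. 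The set $M(f) := \mathcal E \cap \bigcap_L (\mathcal U_L^+ \cap \mathcal U_L^-)$, with $\mathcal E$ the residual class of ergodic maps, is the desired dense $G_\delta$ once density of each $\mathcal U_L^\pm$ is established: for $T \in M(f)$ and each $L$, a set of $x$ of measure $> 1 - 1/L$ satisfies $T^{N+1}x \in E_N^+$ for some $N \geq L$, and intersecting over $L$ leaves a.e.\ $x$ with infinitely many such $N$ (and similarly for $E_N^-$).

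The main obstacle is density of $\mathcal U_L^\pm$, for which I would invoke Halmos's conjugacy lemma: the conjugacy class $\{U T_B U^{-1} : U \in \mathcal M\}$ of any aperiodic $T_B$ is dense in $\mathcal M$ in the weak topology. Fix $T_B$ to be a Bernoulli shift on $(X,\mathcal B,\mu)$, available since the space is standard. The quantity $\mu(\bigcup_N T^{-(N+1)} E_N^\pm)$ is invariant under the substitution $T \mapsto U T U^{-1}$ with $E_N^\pm$ replaced by $U^{-1} E_N^\pm$ (whose measures remain $p_N^\pm$), so density reduces to verifying $\mu(\limsup_N T_B^{-(N+1)} F_N) = 1$ whenever $\sum_N \mu(F_N) = \infty$. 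My approach is to approximate each $F_N$ by a cylinder set $F_N'$ with $\mu(F_N \triangle F_N') < 2^{-N}$, so that by the first Borel-Cantelli lemma $\setone_{F_N}(T_B^{N+1}\omega) = \setone_{F_N'}(T_B^{N+1}\omega)$ for all but finitely many $N$ a.s., reducing matters to $F_N'$. An inductive extraction of a sparse subsequence $(N_k)$ then makes the shifted cylinder supports of $F_{N_k}'$ pairwise disjoint, rendering the events $T_B^{-(N_k+1)} F_{N_k}'$ independent; a block-partition of the divergent series is used to guarantee $\sum_k \mu(F_{N_k}') = \infty$ concurrently with the sparsity constraint, after which the second Borel-Cantelli lemma finishes the proof.
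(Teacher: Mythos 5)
Your first stage (the dense $G_\delta$ set $\mathcal G\subset L^1$ of functions with $\sum_N \mu\{f>c_f+w(N)\}=\sum_N\mu\{f<c_f-w(N)\}=\infty$) is sound and essentially matches the paper, up to a small slack issue: since the inequality to be achieved is $f(T^{N+1}x)>\frac{N+1}{N}w(N)+\setA_{[N]}f(T^nx)$, exceeding $c_f+w(N)$ by an arbitrarily small margin is not quite enough; you should build the generic class with $2w(N)$ (as the paper does) or $w(N)+1$ in place of $w(N)$, which your density argument delivers just as easily. The genuine gap is in the second stage. You reduce density of $\mathcal U_L^\pm$ to the claim that for a fixed Bernoulli shift $T_B$ one has $\mu\bigl(\limsup_N T_B^{-(N+1)}F_N\bigr)=1$ for \emph{every} sequence of measurable sets with $\sum_N\mu(F_N)=\infty$ (you need it for every sequence because the conjugating $U$ ranges over essentially all of $\mathcal M$, so $U^{-1}E_N^\pm$ is an essentially arbitrary sequence with the prescribed measures). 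That claim is false. On $\{0,1\}^{\setZ}$ with the shift $\sigma$, take $F_N=\{x: x_{-(N+1)}=\dots=x_{-(N+1)+k_N-1}=0\}$ with $k_N\approx\log_2 N$, so $\mu(F_N)\approx 1/N$ and the sum diverges; but $\sigma^{-(N+1)}F_N=\{x: x_0=\dots=x_{k_N-1}=0\}$, so $\limsup_N\sigma^{-(N+1)}F_N\subset\{x: x_j=0 \text{ for all } j\ge 0\}$, a null set. This also pinpoints where your proof of the claim breaks: after approximating $F_N$ by cylinders, the coordinate windows of the cylinders are of uncontrolled size and location (they need not be tied to $N$ at all), so after shifting by $N+1$ they may all overlap a fixed coordinate, and no subsequence with pairwise disjoint windows and divergent measure sum exists; independence cannot be manufactured this way.

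Because the universal Borel--Cantelli property fails, Halmos conjugacy of a single fixed Bernoulli map is not enough: the transformation placed in a given weak neighborhood must be chosen \emph{adapted to the fixed sets} $B_N=\{f>c_f+2w(N)\}$ and $\{f<c_f-2w(N)\}$. This is exactly what the paper's proof does by invoking Theorem 4.3 of \cite{RR}, which says that for fixed sets $B_N$ with $\sum_N\mu(B_N)=\infty$ there is a dense $G_\delta$ set of $T\in\mathcal M$ such that $T^Nx\in B_N$ infinitely often a.e.; the proof of such a result constructs, inside each weak neighborhood (via Rokhlin-tower/periodic approximation arguments), a map whose orbits visit the given sets at the right times, rather than relying on a mixing property of one fixed map against arbitrary images of the sets. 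Your openness argument for $\mathcal U_L^\pm$ and the final passage from ``$T^{N+1}x\in B_N$ i.o.'' to the stated inequalities are fine; it is the density step that needs to be replaced by an argument of this adapted-construction type (or by the citation the paper uses).
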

\begin{proof} 
We know that $f$ is integrable on a probability space if and only if $$\sum\limits_{N\in \setN} \mu\big\{|f|\ge N\big\} < \infty.$$

It is not hard to see this is the best possible since if $\rho_N\to 0$, then for a dense $G_\delta$ set of functions in $L^1$, we have $\displaystyle\sum_{N\in \setN} \mu\big\{|f| > \rho_N \cdot N\big\}  = \infty$.  So if $w_N > 0$ and $w_N/N \to 0$, taking this as $\rho_N$, we see that for a dense $G_\delta$ set of functions $\displaystyle\sum_{N\in \setN} \mu\big\{|f| > w_N\big\} = \infty$.

A little more work will show even more, as stated in the proof: for a dense $G_\delta$ set of functions, both $\displaystyle\sum_{N\in \setN} \mu\big\{f > 2w_N\big\} =\infty$ and $\displaystyle\sum_{N\in \setN} \mu\big\{f < - 2w_N\big\} = \infty$.

We take such a class $\mathcal F$ as above in $L^1$. We will use the argument of \cite[Theorem 4.3]{RR}. However, here we will use $B_N = \{f > 2w_N\}$ and $B_N =\{f < - 2w_N\} $, respectively, and use $T$ to denote the transformation, not $\tau$.   

 By Theorem 4.3 of \cite{RR} , for all the $f\in \mathcal F$, we have a dense $G_\delta$ set of maps $T$ such that for a.e. $x$, $T^Nx \in B_N$ infinitely often, for both types of $B_N$.

Now we use the analysis before \Cref{prop:shrink} to both of the following inequalities: $$\setA_{[N+1]}f(T^n x) > \frac{w_N}{N} + \setA_{[N]}f(T^n x) \text{ and } \setA_{[N+1]}f(T^n x) < -\frac{w_N}{N} + \setA_{[N]}f(T^n x).$$   Because we have used $2w_N$ in defining the $B_N$, we have a little more room in the inequalities.  So this gives us the divergent series results as stated in \cref{prop:shrink}.
\end{proof}

 \begin{rem}\label{LEVELS}

It is worth noting that if $p > 1$, then $L^p$ is first category in $L^1$, and indeed the complement $L^1\backslash L^p$ is a dense $G_\delta$.  So if we ask for the generic behavior for $$\sum_{n\in \setN} \mu\{ |f| > w_n\} = \infty, \text{ and } w_n = n^{1/p},$$ we can use this.

However, for more general $w_n$ with $w_n/n \to 0$, something more is needed.  Probably, in this case there is always an increasing $\phi:[0,\infty)\to [0,\infty)$ such that $\phi(n) \ge w_n$ and yet still $\phi(n)/n \to 0$.   Then one argues that the $f\in L^1$ such that $\phi^{-1}(|f|)$ is not integrable is a generic set.  So generically we would have
\begin{align*}
\sum\limits_{n\in \setN} \mu\big\{|f| \ge w_n\big\} \ge \sum\limits_{n\in \setN} \mu\big\{|f| \ge \phi (n)\big\} = \sum\limits_{n\in \setN} \mu\big\{\phi^{-1}(|f|) \ge n\big\} = \infty.
\end{align*}

We use $\phi$ and consider an Orlicz space of functions where $\phi^{-1}(|f|)$ is integrable and show that it is first category in $L^1$, with a dense $G_\delta$ complement, just as is done in the special cases where $\phi(n) = n^{1/p}$.
 \end{rem}

Now, we will deduce some quantitative results about the non-monotonicity of subsequences of averages by using some strong sweeping-out results of moving averages.

Write
\begin{align}\label{eq:movingave}
\setA_{ [0,N_{i+1})}f(T^n x)&-\setA_{ [0,N_i)}f(T^n x)\\
    &= \Big(\setA_{ [N_i,N_{i+1})}f(T^n x) -\setA_{ [0,N_i)}f(T^n x)\Big)\dfrac{N_{i+1}-N_{i}}{N_{i+1}}.
\end{align}
If we let $l_i= N_{i+1}-N_i$, then we get
\begin{equation*}
 \setA_{ [N_i,N_{i+1})}f(T^n x)=\frac{1}{l_i}\sum_{j\in [0,l_i-1]}f (T^{N_i+j}x). 
\end{equation*}
We introduce a sequence of operators $M_i:L^1\to L^1$ as follows
\begin{equation}\label{eq:movingoperators}
M_if(x)=\frac{1}{l_i}\sum_{j\in [0,l_i-1]}f (T^{N_i+j}x).
\end{equation}

These operators are called the \emph{moving averages} for the system $(X,\mathcal{B},\mu,T)$ with parameters $(N_i, l_i)$.

\begin{prop}\label{prop:4} 
Assume that $T$ is ergodic, and $(M_i)$ satisfies the strong sweeping out property. Then there is a residual set $\beta\subset \mathcal{B}$ with the following property:\\
For any $E\in \beta$ there exists a constant $d= d(E, N_i)>0$ such that for a.e. $x$,
\begin{align*}
\setA_{ [0,N_{i+1})}\setone_E(T^n x)>\frac{d}{N_{i+1}}+\setA_{ [0,N_i)}\setone_E(T^n x) \text{ infinitely often,} \\
\text{ and for a.e. $x$, } \setA_{ [0,N_{i+1})}\setone_E(T^n x)<-\frac{d}{N_{i+1}}+\setA_{ [0,N_i)}\setone_E(T^n x) \text{  infinitely often.}
\end{align*}
 The constant $d$ can be taken to be $\frac{1}{2}\big(\min_{i}\{l_i\}\big)\cdot\big(\min\{\mu(E),1-\mu(E)\}\big).$
\end{prop}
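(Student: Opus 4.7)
The plan is to leverage the identity already recorded in \eqref{eq:movingave}. For indicator functions it reads
\[
\setA_{[0,N_{i+1})}\setone_E(T^nx)-\setA_{[0,N_i)}\setone_E(T^nx)=\bigl(M_i\setone_E(x)-\setA_{[0,N_i)}\setone_E(T^nx)\bigr)\frac{l_i}{N_{i+1}},
\]
and since $\setA_{[0,N_i)}\setone_E(T^nx)\to\mu(E)$ a.e.\ by the pointwise ergodic theorem while $l_i/N_{i+1}\ge(\min_i l_i)/N_{i+1}$, the absolute value of the left-hand side is bounded below by roughly $(\min_i l_i)/N_{i+1}$ times the deviation of $M_i\setone_E(x)$ from $\mu(E)$. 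So the proposition reduces to exhibiting a residual class of $E\in\mathcal B$ for which, for a.e.\ $x$, $M_i\setone_E(x)$ approaches both $1$ and $0$ along infinite subsequences of $i$.

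To do this I equip $\mathcal B$ with the complete metric $d_\mu(A,B)=\mu(A\triangle B)$ and set
\[
\beta:=\bigl\{E\in\mathcal B:\ \limsup_i M_i\setone_E=1\ \text{a.e.\ and}\ \liminf_i M_i\setone_E=0\ \text{a.e.}\bigr\}.
\]
Write $\beta=\bigcap_n(U_n^+\cap U_n^-)$ with
\[
U_n^+=\bigl\{E:\mu\{\sup_i M_i\setone_E>1-1/n\}>1-1/n\bigr\},\qquad U_n^-=\bigl\{E:\mu\{\inf_i M_i\setone_E<1/n\}>1-1/n\bigr\}.
\]
Each $U_n^\pm$ is open by a standard finite-truncation and $L^1$-continuity argument using that every $M_i$ is a positive $L^1$-contraction. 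Density of $U_n^+$: given $F\in\mathcal B$ and $\delta>0$, the strong sweeping out hypothesis furnishes $E_\delta$ with $\mu(E_\delta)<\delta$ and $\sup_i M_i\setone_{E_\delta}=1$ a.e.; then $F\cup E_\delta$ is $\delta$-close to $F$ and $M_i\setone_{F\cup E_\delta}\ge M_i\setone_{E_\delta}$ by positivity, so it lies in every $U_n^+$. Density of $U_n^-$: take $F\setminus E_\delta\subset E_\delta^c$; by positivity and $M_i\setone_X\equiv1$ we get $M_i\setone_{F\setminus E_\delta}\le1-M_i\setone_{E_\delta}$, and hence $\inf_iM_i\setone_{F\setminus E_\delta}\le1-\sup_iM_i\setone_{E_\delta}=0$ a.e. Baire's theorem then gives that $\beta$ is a dense $G_\delta$, hence residual.

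For $E\in\beta$ set $d:=\tfrac12(\min_il_i)\min\{\mu(E),1-\mu(E)\}$. Work on the full-measure set where $\limsup_iM_i\setone_E=1$, $\liminf_iM_i\setone_E=0$, and $\setA_{[0,N_i)}\setone_E(T^n\cdot)\to\mu(E)$. Pick $\theta<1$ so close to $1$ that $\theta-\mu(E)$ exceeds $(1-\mu(E))/2$ by a fixed margin $\eta>0$. For infinitely many $i$ we have $M_i\setone_E(x)>\theta$, and for all sufficiently large $i$ also $|\setA_{[0,N_i)}\setone_E(T^nx)-\mu(E)|<\eta$; hence for infinitely many $i$, $M_i\setone_E(x)-\setA_{[0,N_i)}\setone_E(T^nx)>(1-\mu(E))/2$. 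Multiplying by $l_i/N_{i+1}\ge(\min_il_i)/N_{i+1}$ and inserting in the identity produces the first inequality with gap at least $d/N_{i+1}$. A mirror argument with $\theta'\in(0,\mu(E))$ close to $0$ and the $\liminf$ condition produces the opposite-sign inequality. The main technical point is the density step for $U_n^-$, where the asymmetric form of strong sweeping out (which naturally delivers a concentration set) must be inverted to yield a depletion set; this is handled cleanly via complementation thanks to the positivity of $M_i$ and the identity $M_i\setone_X\equiv1$.
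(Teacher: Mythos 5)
Your reduction is exactly the paper's: you use the identity \eqref{eq:movingave}, the a.e.\ convergence $\setA_{[0,N_i)}\setone_E(T^nx)\to\mu(E)$, and the fact that for $E$ in a residual class $\beta$ one has $\limsup_i M_i\setone_E=1$ a.e.\ and $\liminf_i M_i\setone_E=0$ a.e., and your final quantitative step (choosing $\theta$ close to $1$, resp.\ $0$, and multiplying by $l_i/N_{i+1}\ge(\min_il_i)/N_{i+1}$) matches the paper's computation and gives the stated constant $d$. The one real difference is that the paper obtains the residual class $\beta$ by citing Theorem~1.2 of del Junco--Rosenblatt, whereas you try to prove its residuality directly by a Baire category argument; that is a legitimate and in principle more self-contained route, but as written it has a gap.

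The gap is in the decomposition $\beta=\bigcap_n(U_n^+\cap U_n^-)$. Your $U_n^{\pm}$ are defined with $\sup_i$ and $\inf_i$ over \emph{all} $i$, so $\bigcap_n(U_n^+\cap U_n^-)$ is the set of $E$ with $\sup_iM_i\setone_E=1$ and $\inf_iM_i\setone_E=0$ a.e., which is strictly larger than your $\beta$ (e.g.\ $M_1\setone_E$ could be $1$ on a large set while $M_i\setone_E\to\mu(E)$ as $i\to\infty$). Residuality of this larger set does not give residuality of $\beta$, and membership in the larger set is not enough for your last paragraph, where you need $M_i\setone_E(x)>\theta$ for \emph{infinitely many} $i$ (simultaneously with the eventually-valid bound on $\setA_{[0,N_i)}\setone_E(T^nx)$), i.e.\ you genuinely need $\limsup_i$, not $\sup_i$. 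The repair is routine: define $U_{n,L}^{+}=\{E:\mu\{\sup_{i\ge L}M_i\setone_E>1-1/n\}>1-1/n\}$ and $U_{n,L}^{-}$ analogously with $\inf_{i\ge L}$, and intersect over both $n$ and $L$; then the intersection is contained in $\beta$ (let $L\to\infty$, then $n\to\infty$), openness follows by the same finite-truncation argument applied to $\max_{L\le i\le M}$, and your density perturbations $F\cup E_\delta$ and $F\setminus E_\delta$ still work because strong sweeping out gives $\limsup_iM_i\setone_{E_\delta}=1$ and hence $\sup_{i\ge L}M_i\setone_{E_\delta}=1$ a.e.\ for every $L$ (and, via $M_i\setone_X\equiv 1$ and positivity, $\inf_{i\ge L}M_i\setone_{F\setminus E_\delta}=0$ a.e.). With that modification your argument is complete and essentially reproves the special case of the cited del Junco--Rosenblatt result that the paper uses.
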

\begin{proof}
Let $d$ be as mentioned in the proposition.
Since $T$ is ergodic, 
\begin{equation}\label{eq:15}
 \setA_{ [0,N_i)}\setone_E(T^n x)\to \mu(E) \text{ for a.e. } x. 
\end{equation}
Suppose $(M_i)$ satisfies the strong sweeping out. Then, by using \cite[Theorem 1.2]{delJuncoRosenblatt} and observing that $\setA_{ [N_i,N_{i+1})}f(T^n x)=M_i(x)$, we can find a residual set $\beta\subset \mathcal{B}$ such that for every $E\in \beta$,
\begin{equation}\label{eq:9}
    \limsup_{i} \setA_{ [N_i,N_{i+1})}\setone_E(T^n x)=1 \text{ for a.e. }x
\end{equation}
and 
\begin{equation}\label{eq:10}
\liminf_{i}\setA_{ [N_i,N_{i+1})}\setone_E(T^n x)=0\text{ for a.e. }x.
\end{equation}
Let $E\in \beta$. Let $\tilde{X}$ be the common set of full measure where \eqref{eq:15}, \eqref{eq:9} and \eqref{eq:10} hold.
Hence, for $x\in \tilde{X}$, there are two subsequences $(N_{i_j})=(N_{i_j})(x)$ and $(N_{i_k})=(N_{i_k})(x)$ such that
\begin{align}\label{eq:11}
\lim_{j} \setA_{[N_{i_j}+l_{i_j})}\setone_E(T^n x)=1
\end{align}
and 
\begin{align}\label{eq:12}
\lim_{k} \setA_{[N_{i_k},N_{i_k}+l_{i_k})}\setone_E(T^n x)=0.
\end{align}
Thus we have
\begin{align}
\lim_{j} \Big(\setA_{[N_{i_j},N_{i_j}+l_{i_j})}\setone_E(T^n x)-\setA_{N_{i_j}}\setone_E(T^n x)\Big)&= (1-\mu(E)) \text{ and } \label{eq:13}\\
\lim_{k} \Big(\setA_{[N_{i_k},N_{i_k}+l_{i_k})}\setone_E(T^n x)-\setA_{N_{i_k}}\setone_E(T^n x)\Big)&= (0-\mu(E)).\label{eq:14}
\end{align}
By \eqref{eq:movingave}, we have 
\begin{align*}
\setA_{N_{i_j+1}}\setone_E(T^n x)-\setA_{N_{i_j}}\setone_E(T^n x)&=\Big(\setA_{[N_{i_j},N_{i_j}+l_{i_j})}\setone_E(T^n x)-\setA_{N_{i_j}}\setone_E(T^n x)\Big)\dfrac{l_{i_j}}{N_{i_j+1}} 
\end{align*}
By using \eqref{eq:13}, we get
\begin{align*}
\Big(\setA_{N_{i_j+1}}\setone_E(T^n x)-\setA_{N_{i_j}}\setone_E(T^n x)\Big)
&\to (1-\mu(E))\dfrac{l_{i_j}}{N_{i_j+1}} \text{ as } j \to \infty.\end{align*}
Thus for large $j$,
\begin{align*}
\Big(\setA_{N_{i_j+1}}\setone_E(T^n x)-\setA_{N_{i_j}}\setone_E(T^n x)\Big)
&\geq \dfrac{d}{N_{i_j+1}}. \intertext{ Similarly, using \eqref{eq:movingave} and \eqref{eq:14}, we get}
\Big(\setA_{N_{i_k+1}}\setone_E(T^n x)-\setA_{N_{i_k}}\setone_E(T^n x)\Big) &\leq -\dfrac{d}{N_{i_k+1}} .
\end{align*}
This finishes the proof.
\end{proof}

We say a sequence $(N_i)$ is sublacunary if it satisfies the following
\begin{equation*}
\lim_{i\to \infty}\dfrac{N_{i+1}}{N_i}=1.
\end{equation*}

\begin{cor}{}{}
Assume that $T$ is an ergodic transformation on a non-atomic probability space $(X,\mathcal{B}, \mu)$ and $(N_i)$ is sublacunary. Then there is a residual set $\beta\subset \mathcal{B}$ with the following property:\\
For any $E\in \beta$ there exists a constant $d= d(E, N_i)>0$ such that for a.e. $x$, 
\begin{align*}
\setA_{ [0,N_{i+1})}\setone_E(T^n x)&>\frac{d}{N_{i+1}}+\setA_{ [0,N_i)}\setone_E(T^n x) \text{ infinitely often,}\\
 \text{ and for a.e. $x$, } \setA_{ [0,N_{i+1})}&\setone_E(T^n x)<-\frac{d}{N_{i+1}}+\setA_{ [0,N_i)}\setone_E(T^n x) \text{ infinitely often.}
\end{align*}
\end{cor}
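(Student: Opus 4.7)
The plan is to apply \Cref{prop:4} to the moving-average operators $M_i f(x) = \frac{1}{l_i}\sum_{j\in[0,l_i-1]} f(T^{N_i+j}x)$ with $l_i = N_{i+1} - N_i$. This reduces the problem to checking that $(M_i)$ satisfies the strong sweeping out property on the non-atomic probability space $(X,\mathcal{B},\mu)$; granting that, the residual set $\beta\subset\mathcal{B}$, the one-sided gap inequalities of size $d/N_{i+1}$, and the explicit value of the constant all transfer verbatim from \Cref{prop:4} without any further work.

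The key reformulation is that the sublacunarity hypothesis $N_{i+1}/N_i\to 1$ is equivalent to $l_i/N_i \to 0$, placing the parameter pairs $(N_i, l_i)$ in the ``thin-window'' regime for moving averages: the averaging windows $[N_i,N_i+l_i)$ are asymptotically negligible compared with their starting points. In this regime the standard bad-behavior constructions apply on any non-atomic ergodic system. Concretely, the strong sweeping out results of \cite{ABJLRW} (see also the survey \cite[Chapter II]{RW} for the general framework), built via Rokhlin-tower constructions tuned to the parameters $(N_i,l_i)$, produce for every $\epsilon>0$ a set $E$ with $\mu(E)<\epsilon$ for which $\limsup_i M_i\setone_E = 1$ and $\liminf_i M_i\setone_E = 0$ almost everywhere.

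The main obstacle is making sure an off-the-shelf SSO theorem covers the precise parametrization $l_i = N_{i+1}-N_i$ arising from sublacunarity, since the sharpest statements in the literature are typically phrased for lacunary $(N_i)$ or for specific growth rates of $l_i$. If needed, one circumvents this by passing to a subsequence $(N_{i_k})$ along which $N_{i_k+1}/N_{i_k}$ grows to infinity while $l_{i_k}/N_{i_k}$ remains small—such a subsequence always exists under sublacunarity by a routine pigeonholing on the doubling exponents—and then applies the classical Bellow-type construction on this thinned sequence, which falls squarely within the hypotheses of the SSO theorems cited above. Since the proof of \Cref{prop:4} only uses $(M_i)$ via its $\limsup$ and $\liminf$, SSO along this subsequence suffices. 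Feeding the resulting SSO input into \Cref{prop:4} produces the residual set $\beta$, the constant $d = \tfrac{1}{2}(\min_i l_i)\cdot(\min\{\mu(E),1-\mu(E)\})$, and both of the claimed infinitely-often gap inequalities, completing the proof.
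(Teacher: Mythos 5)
Your overall skeleton is the same as the paper's: reduce to \Cref{prop:4} by viewing $\setA_{[N_i,N_{i+1})}$ as the moving averages $M_i$ with parameters $(N_i,l_i)$, $l_i=N_{i+1}-N_i$, so that everything hinges on showing these moving averages have the strong sweeping out property. The paper supplies that input by citing \cite[Theorem 2.5]{RW_Moving}, a strong sweeping out theorem for moving averages whose hypotheses are exactly the sublacunary regime $l_i/N_i\to 0$. This is where your proposal has a genuine gap: you never correctly invoke (or prove) such a theorem. The result you lean on, \Cref{lem:sso} from \cite{ABJLRW}, concerns Ces\`aro averages $\frac1N\sum_{n\le N}f(T^{a_n}x)$ along a lacunary sequence $(a_n)$ --- a different family of operators from the moving averages $M_i$ --- so ``the standard bad-behavior constructions apply'' is not a justification but a restatement of what must be proved.

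The proposed repair is also flawed. You claim that under sublacunarity one can extract a subsequence $(N_{i_k})$ with $N_{i_k+1}/N_{i_k}\to\infty$ while $l_{i_k}/N_{i_k}$ stays small; read literally this is impossible, since sublacunarity means precisely $N_{i+1}/N_i\to 1$, so every ratio of a term to its successor in the original sequence tends to $1$. If instead you mean ratios of consecutive terms of the thinned subsequence, then such thinning is trivially possible, but it changes nothing relevant: each retained operator $M_{i_k}$ still averages over the window $[N_{i_k},N_{i_k}+l_{i_k})$ with the same $l_{i_k}=N_{i_k+1}-N_{i_k}$, and sparseness of the retained starting points does not by itself yield strong sweeping out (e.g.\ windows $[2^k, \tfrac32\cdot 2^k)$ are very separated yet satisfy the cone condition and converge a.e.), nor does it place the operators within the hypotheses of \Cref{lem:sso}. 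So the crucial hypothesis of \Cref{prop:4} --- strong sweeping out of $(M_i)$, or at least of a subsequence of it --- is left unestablished. Your observation that strong sweeping out along a subsequence would suffice (since \Cref{prop:4} only uses the $\limsup$ and $\liminf$) is fine, and the transfer of the constant $d=\tfrac12(\min_i l_i)\min\{\mu(E),1-\mu(E)\}$ is fine, but the proof needs the moving-average strong sweeping out theorem for thin windows, i.e.\ \cite[Theorem 2.5]{RW_Moving} (or an equivalent Rokhlin-tower construction carried out in detail), as its input.
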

\begin{proof}
This result follows from \cite[Theorem 2.5]{RW_Moving} and \Cref{prop:4}.
\end{proof}

\section{Fluctuation of convolution operators}\label{Derive}

Given $\phi \in L^1(\mathbb R)$ and a suitable function $f$, the 
convolution of $\phi$ and $f$ will be denoted by $\phi\star f (x)
= \int\limits_{-\infty}^{\infty} \phi(y) f(x - y) d\lambda(y)$.
The first basic general principle is this one.  

\begin{thm}   \label{thm:divergence} 
Let $B=L^p(\mathbb R)$ for some fixed  
$1 \le p \le \infty$, or $B=CB(\mathbb R)$.  
Assume that $(\phi_n)$ is a
proper approximate identity.   Let $(\epsilon_n)$ be a sequence of positive
  numbers with $\lim\limits_{n \to \infty} \epsilon_n = 0$. Then
there is a dense $G_{\delta}$ subset $\mathcal A$ 
of $B$ such that for all $f \in 
\mathcal A$, $\limsup\limits_{n \to \infty} \frac {|\phi_n\star f(x) -
  f(x)|}{\epsilon_n} = \infty$ a.e.
\end{thm}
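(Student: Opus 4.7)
The proof is a Baire category argument. I will exhibit $\mathcal A$ as a countable intersection of open dense subsets of $B$, namely
\[
\mathcal W_{M, j, \eta, k} = \Bigl\{f \in B : \lambda\bigl(\{x \in [-j, j] : |\phi_n \star f(x) - f(x)| > M \epsilon_n \text{ for some } n \ge k\}\bigr) > 2j - \eta\Bigr\}
\]
indexed by positive integers $M, j, k$ and positive rationals $\eta$. The sets inside $\lambda(\cdot)$ decrease with $k$, so monotone convergence of measure shows that if $f \in \mathcal A := \bigcap_{M, j, \eta, k} \mathcal W_{M, j, \eta, k}$, then for every $M$ the set $\{x \in [-j, j] : |\phi_n \star f(x) - f(x)|/\epsilon_n > M \text{ i.o.}\}$ has measure at least $2j - \eta$ for all $\eta$, hence full measure; letting $M \to \infty$ along integers gives $\limsup_n |\phi_n \star f(x) - f(x)|/\epsilon_n = \infty$ for a.e.\ $x$. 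The task therefore reduces to showing each $\mathcal W_{M, j, \eta, k}$ is both open and dense in $B$.

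Openness is routine. Truncate the countable event $\{\exists n \ge k\}$ to a finite union $n \in [k, K]$ (valid by monotone convergence) and introduce a safety margin $\sigma > 0$ in the strict inequality. Each linear operator $g \mapsto \phi_n \star g - g$ on $B$ is continuous with norm at most $2$, giving a uniform pointwise perturbation bound in $B = L^\infty$ or $CB$, and via Chebyshev a measure bound on the set of pointwise failures in $B = L^p$. A union bound over the finite range of $n$ produces an open neighborhood of $f$ inside $\mathcal W_{M, j, \eta, k}$.

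Density is the main step. Given $g \in B$ and $\xi > 0$, I construct $f \in \mathcal W_{M, j, \eta, k}$ with $\|f - g\|_B < \xi$. Pick $n_0 \ge k$ so large that $\phi_{n_0}$ is supported in a tiny interval $[-\delta_{n_0}, \delta_{n_0}]$ and $\epsilon_{n_0}$ is tiny; by Riemann-Lebesgue for $\phi_{n_0} \in L^1(\setR)$, choose a frequency $N = N(n_0)$ with $|\widehat{\phi_{n_0}}(-N) - 1| \ge 3/4$. Take the perturbation
\[
h(x) = c \cdot \sin(Nx) \cdot \psi(x),
\]
with $\psi$ a fixed smooth cutoff equal to $1$ on $[-2j, 2j]$ and supported in $[-3j, 3j]$. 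Since $\psi \equiv 1$ on a $\delta_{n_0}$-neighborhood of $[-j, j]$, a direct computation gives, for $x \in [-j, j]$,
\[
\phi_{n_0} \star h(x) - h(x) = c \, |\widehat{\phi_{n_0}}(-N) - 1| \, \sin(Nx + \alpha)
\]
for some phase $\alpha$, of amplitude $\ge 3c/4$. Pick a threshold $s = s(\eta, j) > 0$ so that $\{x \in [-j, j] : |\sin(Nx + \alpha)| > s\}$ has measure $\ge 2j - \eta/2$ for $N$ sufficiently large, then select $c$ satisfying both $(3c/4) s > 3 M \epsilon_{n_0}$ and $\|h\|_B < \xi/2$ (requiring $c < \xi/2$ in $B = L^\infty$ or $CB$, and $c(6j)^{1/p} < \xi/2$ in $B = L^p$); both constraints are compatible for $n_0$ large enough because $\epsilon_{n_0} \to 0$. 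Setting $f = \tilde g + h$ with $\tilde g$ an approximant of $g$ satisfying $\|\tilde g - g\|_B < \xi/2$ and $|\phi_{n_0} \star \tilde g - \tilde g| < M \epsilon_{n_0}$ on most of $[-j, j]$, the triangle inequality $|\phi_{n_0}\star f - f| \ge |\phi_{n_0}\star h - h| - |\phi_{n_0}\star \tilde g - \tilde g|$ places $f$ in $\mathcal W_{M, j, \eta, k}$.

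The principal obstacle is the construction of $\tilde g$. In $B = L^p(\setR)$ with $1 \le p < \infty$, the density of $C_c^\infty$ resolves it cleanly: take $\tilde g$ smooth and compactly supported close to $g$, after which $\phi_n \star \tilde g \to \tilde g$ uniformly on $\setR$ and the residual is uniformly small for $n_0$ large. For $B = CB(\setR)$ or $L^\infty$, $C_c^\infty$ is not norm-dense; one takes $\tilde g = g$ and relies on dominated convergence: for $g \in CB$, pointwise convergence $\phi_n \star g(x) \to g(x)$ together with the bound $|\phi_n \star g - g| \le 2\|g\|_\infty$ yield $\|\phi_n \star g - g\|_{L^1([-j,j])} \to 0$, and one selects $n_0$ along a subsequence where Chebyshev reduces the residual set to measure $< \eta/2$. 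This subsequence selection, performed simultaneously with the other requirements on $n_0$ (the smallness of $\delta_{n_0}$ and $\epsilon_{n_0}$, and the Riemann-Lebesgue estimate), is the most delicate balancing step of the proof; the $L^\infty$ case requires an analogous but still more careful truncation argument.
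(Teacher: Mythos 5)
Your proposal is correct in outline but takes a genuinely different route from the paper. The paper delegates the Baire category mechanism to Theorem~1.1 of del Junco--Rosenblatt and therefore only has to construct, for each $K$, $\epsilon$ and interval $I$, a single function of norm at most $1$ (the rapidly oscillating square wave $\frac{1}{\lambda(I)}\,\mathrm{sgn}(\sin Mx)\setone_I$, smoothed in the $CB$ case) whose convolutions $\phi_n\star f$ are uniformly small for $1\le n\le N$ while $|f|$ stays of unit size, so that $\sup_n |\phi_n\star f-f|/\epsilon_n\ge K$ on most of $I$. You instead hand-roll the category argument: explicit open sets $\mathcal W_{M,j,\eta,k}$, openness by truncation plus a safety margin, and density by perturbing an \emph{arbitrary} $g$ with a high-frequency sine wave, computing $\phi_{n_0}\star h-h$ exactly through $\widehat{\phi_{n_0}}$. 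The analytic core (high-frequency oscillation killed by convolution, i.e.\ Riemann--Lebesgue) is the same; what your route buys is self-containedness, at the price of an ingredient the paper's route never needs, namely control of $\phi_{n_0}\star\tilde g-\tilde g$ for the base function --- and that is where your write-up has its one real defect.

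As written, the density step demands $|\phi_{n_0}\star\tilde g-\tilde g|<M\epsilon_{n_0}$ on most of $[-j,j]$, justified by uniform (or dominated) convergence of $\phi_n\star\tilde g$ to $\tilde g$. That justification cannot give such a bound: the localization rate of $(\phi_n)$ has no relation to $(\epsilon_n)$. For instance with $\phi_n=n\setone_{[0,1/n]}$, $\epsilon_n=2^{-n}$ and $\tilde g(x)=x$ near $[-j,j]$, one gets $|\phi_n\star\tilde g-\tilde g|=\tfrac1{2n}\gg M\epsilon_n$ on all of $[-j,j]$ for every $n$, so no choice of $n_0$ meets your stated requirement, and smoothness of $\tilde g$ (or taking $\tilde g=g$ with a Chebyshev subsequence in the $CB$/$L^\infty$ cases) does not help. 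Fortunately your own triangle inequality needs much less: since $(3c/4)s>3M\epsilon_{n_0}$, it suffices that the residual be below the \emph{fixed} margin $(3c/4)s-M\epsilon_{n_0}\ge cs/2$, a constant independent of $n_0$. That is available for every $g$ at once: for any $g\in L^1_{\mathrm{loc}}(\setR)$, $\|\phi_n\star g-g\|_{L^1([-j,j])}\le \sup_{|y|\le\delta_n}\|g(\cdot-y)-g\|_{L^1([-j,j])}\to 0$ by continuity of translation in $L^1$, so Chebyshev makes $\lambda\{x\in[-j,j]:|\phi_{n_0}\star g-g(x)|>cs/4\}<\eta/2$ for $n_0$ large, with $\tilde g=g$. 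This single observation covers $L^p$, $CB$ and $L^\infty$ simultaneously, removes the smooth-approximation detour, and supplies the ``more careful truncation argument'' you leave unfinished for $L^\infty$. With the threshold corrected in this way (and the constants in your choice of $c$, $s$, $n_0$, $N$ ordered exactly as you already have them), your proof is complete.
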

\begin{proof}  We use Theorem 1.1 of \cite{delJuncoRosenblatt}. Because our approximate identity is
  normalized and asymptotically local, we can use this theorem, which
  is stated for probability spaces, in the context of convolutions on 
$\mathbb R$.  Indeed, what we show here is that for any interval $I =
  (a,b)$, there is a dense $G_{\delta}$ subset $\mathcal A_I$ in $B$
such that for $f \in 
\mathcal A_I$, $\sup\limits_{n \ge 1} \frac {|\phi_n\star f(x) -
  f(x)|}{\epsilon_n} = \infty$ a.e. on $I$.  Intersecting these sets over
all rational numbers $a$ and $b$ gives us a set 
$\mathcal A$ as in the statement
of the theorem.   

First, consider the case that $B = L^1(\mathbb R)$.  To use
Theorem 1.1 of \cite{delJuncoRosenblatt}, we need to show
that for any $K$ and $\epsilon < \frac 12$, there exists 
$f \in L^1(\mathbb R)$ with $\|f\|_1 \le 1$ such that $$\lambda\big\{x \in I: \sup\limits_{n \ge 1}\frac{ |\phi_n\star f-f|}{\epsilon_n} \ge K\big\} \ge \lambda(I) - \epsilon.$$  
For now also assume that $I = (0,1)$. 
Then, taking $M$ to be a whole number, 
let \linebreak
$~{f(x) = \frac 1{\lambda(I)} \frac {\sin (Mx)}{|\sin (Mx)|}\setone_I(x)}$ except
when $\sin (Mx) = 0$, and then let $f(x) = 1$.  
We see that $\|f\|_1 \le 1$.
Fix $N \ge 1$, and choose $M$ sufficiently large such that the rapid
fluctuation of $f$ causes $\|\phi_n\star f\|_{\infty} \le
\epsilon$ for $1 \le n \le N$. 
So for a fixed $n, 1 \le n \le N$, if $\epsilon_n \le \frac 1{2K}$,
this gives $\frac {|\phi_n \star f - f|}{\epsilon_n} \ge K$ a.e.
on $I$ because $|\phi_n\star f - f| \ge 1 - \epsilon \ge \frac 12$.
So certainly we can construct $f, \|f\|_1 \le 1$, with  $\lambda\big\{x \in I:
\sup\limits_{n \ge 1}\frac{ |\phi_n\star f-f|}{\epsilon_n} \ge K\big\} 
\ge \lambda(I) - \epsilon$. 
By translating and dilating 
the construction of $f$, we can achieve the
same result for any interval $I$, although the choice of $M$ will
need to take into account the size of $I$.    

The same argument works in any $L^p(\mathbb R)$ with some small
changes.  If $1 \le p <
\infty$, we take $f(x) = 
\frac 1{\lambda(I)^{1/p}} \frac {\sin (Mx)}{|\sin (Mx)|}\setone_I(x)$, and proceed
as above.  For
$L^\infty (\mathbb R)$ the normalizing factor is taken to be $1$,
not $\frac 1{\lambda(I)^{1/p}}$.  A little
more care in the construction of $f$ is needed when $B = CB(\mathbb
R)$, since we must make sure that the function is continuous.  In this
case, again letting $I = (0,1)$ at the outset, take 
$g \in CB(\mathbb R)$ to be a
piecewise linear function such that $g = 1$ on $(0,\frac 12 -
\epsilon)$, $g = -1$ on $(\frac 12 + \epsilon,1)$, and $g = 0$ otherwise.  
Then let $f(x) = g(Mx)$ for all $x \in \mathbb R$.
As we decrease
$\epsilon$ and increase $M$, the increased fluctuation in $f$ on $I$
will allow us to have simultaneously $|\phi_n \star f| \le \epsilon$ for
all $n, 1 \le n \le N$, and $|f| = 1$ on some $J \subset I$ with $\lambda(J)\ge 1 -
\epsilon$.  Again, since $\lim\limits_{n \to \infty} \epsilon_n = 0$,
this gives the desired estimate on the function on $(0,1)$.  By
translating and dilating the construction of $f$, we can achieve the
the same result for any interval $I$.
\end{proof}

 \begin{rem} 
{\bf a)} Theorem~\ref{thm:divergence} is really meant to address the
case where $(\phi_n\star f)$ converges to $f$ a.e. for all $f \in B$.
If not, having a ratio divergence does not have the same
value.  But even in this situation, the fact that the divergence
could occur for different functions because of different
very sparse subsequences of $(\phi_n)$ suggests that even in this case
the theorem tells us something worthwhile.

\noindent {\bf b)} One can possibly think that dividing by a sequence
of constants $\epsilon_n$ that tend to $0$ is too strict, but rather
one should look for a sequence of positive functions $(\epsilon_n(x))$ that
converges to $0$ a.e. to gauge the rate of convergence of the convolutions. 
However, if $\lim\limits_{n \to \infty} \epsilon_n(x) = 0$ a.e.,
then for any $\epsilon > 0$ and any bounded interval $I$, there
is $J\subset I$ such that $\lambda(I\backslash J) < \epsilon$ and
$\lim\limits_{n \to \infty} \epsilon_n(x) = 0$ uniformly on $J$.
Using this, the argument in Theorem~\ref{thm:divergence}
shows that there is a dense $G_{\delta}$ of functions $f$ in $B$ 
such that $\limsup\limits_{n \to \infty} \frac {|\phi_n\star f(x) -
  f(x)|}{\epsilon_n(x)} = \infty$ a.e. 

\noindent {\bf c)} We could also develop other types of measures of the
rate of convergence of $(\phi_n\star f)$ as was done in the ergodic
theory context in \cite[Theorem 1.1]{delJuncoRosenblatt} using the principle 
formulated there. For example,
even in the best situation, where $\phi_n = n\setone_{[-1/n,0]}, n \ge 1$,
as long as a sequence of positive numbers $(w_n)$ has
$\sum\limits_{n\in\setN} w_n = \infty$, the series 
$\sum\limits_{n\in\setN} w_n (\phi_n\star f - f)$ will diverge almost
everywhere for a dense $G_{\delta}$ set of functions in $B$.  The
proof of this and related variations on this theme are reasonably 
straightforward adaptations of the ergodic theoretic results that are 
given in \cite{delJuncoRosenblatt}.  

\noindent {\bf d)} It is clear that there are non-trivial functions,
even continuous functions, such that we do not have any such
divergence.  Having a characterization of this, or at least some simple constructions, would be
worthwhile.
 \end{rem} 

With a little more attention to the construction, we can improve
Theorem~\ref{thm:divergence} to prove \Cref{thm:bothwaysae}.

\begin{proof}[Proof of \Cref{thm:bothwaysae}]
The mechanics of the proof are the same as for the proof of
Theorem~\ref{thm:divergence}.
We illustrate this first for $B = L^1(\mathbb R)$.   What we need to show
is that
for a fixed interval $I$, we have for all $K > 0$ and $\epsilon >
0$, there
exists some function $f \in L^1(\mathbb R)$ with $\|f\|_1 \le 1$ such that
$\lambda\{x \in I:
\sup\limits_{ n \ge 1} \frac {\phi_n \star f(x)-f(x)}{\epsilon_n} \ge K\}
\ge \lambda(I) -
\epsilon$.  There is no harm in assuming that $I = (0,1)$.  Let $(r_m)$ be
the sequence
of Rademacher functions.  Let $E_m = \{x \in I: r_m(x) = -1\}$.  We will
be choosing a finite
subsequence $r_{m_1},\dots,r_{m_L}$.  The mutual independence of the
Rademacher functions
shows that $\lambda(\bigcup\limits_{l=1}^L E_{m_l}) \ge 1 - (\frac 12)^L$.
Take a set of positive numbers $\{a_1,\dots,a_L\}$.
Let $f = \sum\limits_{l\in[L]} a_lr_{m_l}$.  We will want to have $\|f\|_1
\le 1$, so
for now we just
fix $a_l > 0$ so that $\sum\limits_{l\in[L]} a_l \le 1$.  We consider some
finite
subsequence $(\phi_{n_l}:l =1,\dots,L)$ and 
a fixed $j=1,\dots,L$.  We now estimate
how large $\frac {\phi_{n_j} \star f - f}{\epsilon_{n_j}}$ can be made by using
the fact that
\begin{align*}
\label{eq:underest}
\frac {\phi_{n_j} \star f - f}{\epsilon_{n_j}} &\ge
 \frac {\phi_{n_j} \star (a_jr_{m_j}) - a_jr_{m_j}}{\epsilon_{n_j}}\\
&-\sum\limits_{l\in[j-1]} \frac {\phi_{n_j} 
\star (a_lr_{m_l}) -a_lr_{m_l}}{\epsilon_{n_j}} 
-\sum\limits_{l\in [j+1,L]} \frac {\phi_{n_j} \star (a_lr_{m_l}) -
a_lr_{m_l}}{\epsilon_{n_j}}.
\end{align*}

We make the term $\frac {\phi_{n_j} \star (a_jr_{m_j}) -
a_jr_{m_j}}{\epsilon_{n_j}}$ large
on $E_{m_j}$ just as in the proof 
of Theorem~\ref{thm:divergence} except that we
also have to
compensate for the scaling by $a_j$.  That is, we can fix $n_j$
large and then
choose $m_j$ so that $\phi_{n_j}\star (a_jr_{m_j}) - a_jr_{m_j} 
\ge \frac {a_j}2$ on $E_{m_j}$. 
So if we have originally chosen $n_j$ so that 
$\frac {a_j}{2\epsilon_{n_j}} \ge 2K$, we get
$\frac {\phi_{n_j} \star (a_jr_{m_j}) - a_jr_{m_j}}{\epsilon_{n_j}} \ge 2K$ on
$E_{n_j}$.
 This makes it possible to
choose $n_j$ to give us this estimate even though
$a_j$ may be very small.

Our construction of $(a_l: l =1,\dots, L), (n_l:l=1,\dots,L)$ and
$(m_l:l=1,\dots,L)$ will be inductive.
We begin with any fixed $a_1, 0 < a_1 < 1$, and then
choose $n_1$ and $m_1$ so that
$\frac {\phi_{n_1} \star (a_1r_{m_1}) - a_1r_{m_1}}{\epsilon_{n_1}} \ge 2K$ on
$E_{n_1}$.
Now suppose that $j=1,\dots,L$ is fixed and we have chosen $a_l, n_l$ and $m_l$
for
$l=1,\dots,j-1$.  We then choose $n_j$ large enough for the convolutions
$\phi_{n_j} \star (a_lr_{m_l})$ to be equal to 
$a_lr_{m_l}$ everywhere except for a set of
small measure
made up of intervals around the discontinuities of $r_{m_l}$ for all
$l=1,\dots,j-1$.
So we can arrange for
$\sum\limits_{l\in[j-1]} \frac {\phi_{n_j}\star (a_lr_{m_l}) -
a_lr_{m_l}}{\epsilon_{n_j}}$ to
be $0$ except for a set $F_j$ of measure as small as we like in $I$.  We
can also have chosen
$n_j$ and $m_j$ as discussed above so that we have
$\frac {\phi_{n_j}\star ( a_jr_{m_j}) - a_jr_{m_j}}{\epsilon_{n_j}} \ge 2K$ on
$E_{n_j}$.  So
this means that we can arrange that
$\frac {\phi_{n_j} \star (a_jr_{m_j}) - a_jr_{m_j}}{\epsilon_{n_j}}
-\sum\limits_{l\in[j-1]} \frac {\phi_{n_j}\star ( a_lr_{m_l}) -
a_lr_{m_l}}{\epsilon_{n_j}} \ge 2K$
on $\widetilde {E_{n_j}} = E_{n_j} \backslash F_j$. But now the sum
$\sum\limits_{l\in [j+1,L]} \frac {\phi_{n_j} \star (a_lr_{m_l}) -
a_lr_{m_l}}{\epsilon_{n_j}}$ 
comes from terms chosen later in the inductive construction, and it could
affect that
estimate that we have made already.  However, we do have the freedom to
place restrictions on  the values
of $a_l, l = j+1,\dots,L$ making them 
sufficiently small, without disturbing the
necessary fact that
$\sum\limits_{l\in [L]}a_l \le 1$, so we can arrange that
$\sum\limits_{l\in [j+1,L]} \frac {\phi_{n_j} \star (a_lr_{m_l}) -
a_lr_{m_l}}{\epsilon_{n_j}}$
is uniformly less than $\frac K2$ on $I$.  In summary, on $\widetilde
{E_{m_j}}$,
we have 
$\frac {\phi_{n_j} \star f - f}{\epsilon_{n_j}} \ge \frac {3K}2$.
In this inductive step, we also assume, of course, that we have
chosen $a_j$ so that $a_j > 0$ and
$\sum\limits_{l\in [j]} a_l < 1$.
This ends the inductive step.  We notice that at the $j$th stage
of the induction we are revising the restrictions
on $(a_l:l=j+1,\dots,L)$ that were imposed from earlier steps in
the induction to make the future choices of $a_l$ even smaller than
before.  
In any case,  we can choose the sets $F_{n_j}$ of sufficiently
small measure so that
 the function $f$ has  $ \|f\|_1
\le 1$
and $\lambda\big\{x \in I:
\sup\limits_{ n \ge 1} \frac {\phi_n \star f(x)  -f(x)}{\epsilon_n} \ge K\big\}
\ge \lambda(I) -
\epsilon$.

We now know that there is a dense
$G_{\delta}$
set $\mathcal A^+$ in $L^1(\mathbb R)$ such that for all $f \in \mathcal
B^+$ we have
$\limsup\limits_{n \to \infty} \frac {\phi_n \star f - f}{\epsilon_n} =
\infty$
almost everywhere.
By working with the sets $\{x \in I: r_m(x) = 1\}$, we could
similarly arrange
to have this companion lemma:
for all $K > 0$ and $\epsilon > 0$, there
exists some function $f \in L^1(\mathbb R)$ with $\|f\|_1 \le 1$ such that
$\lambda\{x \in I:
\inf\limits_{ n \ge 1} \frac {\phi_n \star f(x)  f(x)}{\epsilon_n} \le
-K\} \ge \lambda(I) -
\epsilon$. Then we can conclude that there is a
dense $G_{\delta}$
set $\mathcal A^-$
in $L^1(\mathbb R)$ such that for all $f \in \mathcal A^-$ we have
$\liminf\limits_{n \to \infty} \frac {\phi_n \star f - f}{\epsilon_n} =
-\infty$
almost everywhere.  The intersection $\mathcal A = \mathcal A^+ \cap
\mathcal A^-$
gives the desired dense $G_{\delta}$ set.

The case where $B = L^p(\mathbb R)$ is handled similarly except for the
normalization inherent in the choice of $(a_l:l=1,\dots,L)$.  The case of
$CB(\mathbb R)$
requires more care as before, because the functions we construct for our
lemmas must
be continuous.  But there is not much difficulty in handling this, in the
same
way that it was dealt with in Theorem~\ref{thm:divergence}, at the expense
of having a somewhat large set $F_{n_j}$ to deal with, albeit one that can be
chosen inductively
to have measure as small as we like at any stage of the construction.
\end{proof}

\begin{rem}\label{MINUS2}  
\begin{enumerate}
    \item 
There may be value in having the two separate Baire category arguments.  But again, we can use the first one to get the result.  Once we know that for a generic set $\mathcal{C}$, we have for $f\in \mathcal{C}$, 
$\limsup\limits_{n \to \infty} \frac {\phi_n
\star f(x)
-f(x)}{\epsilon_n} = \infty$ almost everywhere,
then we would have a generic set $\mathcal A = -\mathcal{C}$ such that for $f\in \mathcal A$, we have
$\limsup\limits_{n \to \infty} \frac {\phi_n
\star (-f(x)
-(-f)(x)}{\epsilon_n} = \infty$ almost everywhere.
But then, because in general we have $\limsup\limits_{n\to \infty} -\rho_n = -\liminf\limits_{n\to \infty} \rho_n$,
we would have for $f\in \mathcal A$,
$\liminf\limits_{n \to \infty} \frac {\phi_n
\star f(x)
-f(x)}{\epsilon_n} = -\infty$ almost everywhere.
So taking $\mathcal A\cap \mathcal{C}$, we would get a generic set on which we have the fluctuation around the limit that we want to demonstrate.

    \item
In the case that we are considering $B = CB(\mathbb R)$, it seems
reasonable to ask to improve Theorem~\ref{thm:divergence} by requiring
\emph{everywhere} divergence of the ratios.  This is possible, but not
without some technical difficulties.  In our upcoming work, we will explore this topic.
\end{enumerate}
\end{rem}

\section{Martingales}\label{Mart}

In this section, we will prove \Cref{thm:osc}.

\begin{proof}[Proof of \Cref{thm:osc}]
 We can prove the generic behavior occurs above in the following fashion.  
First, fix $l\ge 1$ and $N_0\ge 1$.  Then let $\mathcal O_{l,N_0}$ consist of all 
$G\in L^1[0,1]$ such that $\lambda\{x: E(G|\mathcal D_n)(x) > G(x) \,\text{for some}\,\, n, N_0 \le n \}> 1-\frac 1l$.
By using a similar argument as in \Cref{thm:generic fluctuation}, one can check that $\mathcal O_{l,N_0}$ is open in the $L^1$-norm topology.  Suppose we
show that it is dense in that topology too.  Then $\mathcal O^+ = \bigcap\limits_{l=1}^\infty
\bigcap\limits_{N_0=1}^\infty \mathcal O_{l,N_0}$ is a dense $G_\delta$ set.  Clearly, any $G\in \mathcal O^+$
has the property that for a.e. $x$, there are infinitely many $n$ with $E(G|\mathcal D_n)(x) > G(x)$.  But now
let $\mathcal O^- = \{G: -G \in \mathcal O^+\}$.  Then this is also a dense $G_\delta$ set, and
$G\in \mathcal O^-$
has the property that for a.e. $x$, there is some $n$ with $E(G|\mathcal D_n)(x) < G(x)$.  
Hence, any function $G$ in the dense $G_\delta$ set $\mathcal O= \mathcal O^+ \cap \mathcal O^-$ will have the fluctuation
property that we claimed.
  
So what we want to prove is that each $\mathcal O_{l,N_0}$ is a dense set in the $L^1$-norm topology. 
So fix a function $G\in L^1[0,1]$.  Then, for any fixed $N_0$, we 
want to construct a function $F\in L^1[0,1]$ such that $\|G-F\|_1$ is 
small and such that the conditional expectations 
$E(F|\mathcal D_n)$
fluctuate around the limit $F$ on most of $[0, 1]$, once we take a large number $N$
and consider all $n, N_0\le n \le N$. 

Here is what we do. First, for 
any $\epsilon > 0$, we can find
$M$ sufficiently large and $F_0$ which is measurable with respect to $\mathcal D_M$
and such that  $\|G-F_0\|_1 < \epsilon$.  Now let $\delta > 0$.  
Then we consider the Rademacher functions $r_k$ with $k \ge 0$. Let
$\Sigma_N = \sum\limits_{k\in [0,N]} r_k$.  We take 
$F = F_0 + \delta \Sigma_N$.   For $n\ge M$, we have
$E(F|\mathcal D_n) = F_0 + \delta E(\Sigma_N|\mathcal D_n)$.  Now fix in addition
$N_0$ such that $M\le N_0$ without loss of generality.
For $N$ sufficiently large, we will show that $E(\Sigma_N|\mathcal D_n)$ fluctuates 
around its limit $\Sigma_N$ on most 
of $[0,1]$, with $N_0 \le n \le N$.  Since $M\le N_0\le n$, we have
$E(F|\mathcal D_n) = F_0 + \delta E(\Sigma_N|\mathcal D_n)$.  So we have
also $E(F|\mathcal D_n)$ fluctuating in the same manner around its limit $F$.
Since below we will be fixing $N$, even though it is large, we can now choose $\delta$ sufficiently
small that $\|G - F\|_1 \le 2\epsilon$.  This gives us the approximation of $G$ by
suitably fluctuating conditional expectations that we need to prove the Baire category theorem - that
the dyadic martingale is fluctuating generically.

Let us now look more closely at what the Rademacher functions do for us here.
We have $r_0 = \setone_{[0,1/2]} - \setone_{[1/2,1]}$, and $r_k(x) = r_0(\{2^k x\}) = r_0(2^kx\mod1)$ for
all $x \in [0, 1]$, and $k\ge 0$. So $r_k$ is measurable with respect to $\mathcal D_{k+1}$.
Also, for $1\le n \le k$, we have $E(r_k|\mathcal D_n) = 0$.   The Rademacher functions
are IID mean-zero functions taking the values of $\pm 1$.  So the functions $n\to 
\Sigma_n = \sum\limits_{k\in [0,n]} r_k$ are a model for the classical symmetric random walk on the integers.

So now consider
the conditional expectations $E(\Sigma_N|\mathcal D_n)$.  For $n\ge N+1$, we would
have all $r_k,0\le k\le N$ measurable with respect to $\mathcal D_n$, and hence
$E(\Sigma_N|\mathcal D_n) = \Sigma_N$.  Our goal is to show that for all 
$N_0$ and $\eta > 0$, there is $N\ge N_0$ large enough such
that for all $x \in [0, 1]$  except for a set of measure smaller than $\eta$, we have
\begin{equation}\label{above}
E(\Sigma_N|\mathcal D_n)(x) >  \Sigma_N(x) \,\text{for some} \, \,n, N_0 \le n \le N \text{ and }
\end{equation}
\begin{equation}\label{under}
E(\Sigma_N|\mathcal D_n)(x) <  \Sigma_N(x)  \,\text{for some (other)} \,n, N_0 \le n \le N.
\end{equation}

By the definition of $r_k$ and $\mathcal D_n$, we actually have
$E(\Sigma_N|\mathcal D_n) = \sum\limits_{k\in [0,n-1]} r_k$.  So we are asking for
fluctuation of $\sum\limits_{k\in [0,n-1]} r_k$ around $\sum\limits_{k\in [0,N]} r_k$
using values of $n, N_0\le n\le N$.  Subtracting $E(\Sigma_N|\mathcal D_n)(x)= \sum\limits_{k\in [0,n-1]} r_k(x)$ from
both sides of the inequalities in \eqref{above} and \eqref{under}
 we are asking for the fluctuation of
$\sum\limits_{k\in [n,N]} r_k$ around $0$.   
This is like
the fluctuation of the symmetric random walk around $0$.  Here what we need to 
use is that the terms $r_k$ are IID mean-zero functions taking the values of $\pm 1$,
and so for large enough $N$, even though we will be restricting $n, N_0\le n \le N$,
we have $\sum\limits_{k\in [n,N]} r_k$ fluctuating around $0$ on $[0,1]$ except for a set of measure less than $\eta$. \end{proof}
 \begin{rem}
 There is clearly an index, aka``time reversal", occurring here.  If we were dealing with
a classical random walk, we would be looking at where $\sum\limits_{k\in [1,n]} r_k$ fluctuates around $0$ a.e., but
because we are looking at conditional expectations, which are strongly related to Lebesgue differentiation, we are
looking instead at where $\sum\limits_{k\in [n,N]} r_k$ fluctuates on most of the measure space as $N\ge N_0$ gets large, 
if we take $n$ with $N_0 \le n \le N$.
 \end{rem}
 \begin{prob}{}
\begin{enumerate}
    \item Can we describe completely the functions for which
we do not have fluctuation of the dyadic martingales in the limit, on some set of positive measure? 
\item Is this
class actually much different from the class of functions where $E(G|\mathcal D_n)$ is not monotone i.e.
a.e. $E(G|\mathcal D_n) > E(G|\mathcal D_{n+1})$ infinitely often, and a.e.
$E(G|\mathcal D_n) < E(G|\mathcal D_{n+1})$?

\item When is $E_n G$ monotone without being eventually constant?
 \end{enumerate}
 \end{prob}

\section{Uniformly distributed sequences}\label{UD}

Next, we consider the non-monotonicity and fluctuation of averages of uniformly distributed sequences.  We generally can take a sequence $\mathsf x:= (x_n)$ in $[0,1]$ that is {\em uniformly distributed}.  That is, for every continuous $f:[0,1]\to \mathbb C$, the averages $\setA_{[N]}f(\mathsf x) = \frac {1}{N}\sum\limits_{n\in [N]} f(x_n) \to \int_0^1 f(t) \, dt$ as $N\to \infty$.  The classical example closest to ergodic averages is where we take $\theta\in \mathbb R$ irrational and let $x_n$ be given by the fractional parts  $\{n\theta\}$ for all $n\ge 1$.  As usual, if we let $Tx = \{\theta +x\}$ for $x\in [0,1]$, then $T$ is ergodic and $\setA_{[N]}f(T^n x) = \frac {1}{N}\sum\limits_{n\in [N]} f(T^nx) = \frac {1}{N}\sum\limits_{n\in [N]} f(\{n\theta + x\})$.  For this average, we could take $f\in L^1([0,1],\lambda)$ where $\lambda$ is the normalized Lebesgue measure on $[0,1]$.  However, when we restrict $\setA_{[N]}f(T^n x)$ to $x=0$, then we have to also restrict $f$, for example by assuming it is continuous, in order that we would have convergence of the averages $\setA_{[N]}f(\mathsf x) = \frac {1}{N}\sum\limits_{n\in [N]} f(\{n\theta\})$.  For simplicity, let $I = \int_0^1 f(t) \, dt$.

As in ergodic dynamical systems, we have some basic inequalities.  
Suppose $\setA_{[N+1]}f (\mathsf x)\le \setA_{[N]}f(\mathsf x)$.  Then 
\[\frac 1{N+1} f(x_{N+1}) \le \Big(\frac 1N - \frac 1{N+1}\Big) \sum\limits_{n\in [N]} f(x_n)= \frac 1{N(N+1)} \sum\limits_{n\in [N]} f(x_n).\] This means that $f(x_{N+1}) \le \frac {1}{N} \sum\limits_{n\in [N]} f(x_n) = \setA_{[N]}f(\mathsf x)$. Similarly, if $\setA_{[N+1]}f(\mathsf x)\ge \setA_{[N]}f(\mathsf x)$ then $f(x_{N+1}) \ge \setA_{[N]}f(\mathsf x)$.

These basic inequalities give the following analogue to \Cref{updown}

\begin{prop}\label{prop:updownud}  Suppose a continuous $f:[0,1]\to \mathbb R$ is not a constant function.   Then for infinitely many $N$ we have $\setA_{[N+1]}f(\mathsf x) > \setA_{[N]}f(\mathsf x)$, and \   for infinitely many $N$ we have $\setA_{[N+1]}f(\mathsf x) < \setA_{[N+1]}f(\mathsf x)$.  Hence,\    the averages $\setA_{[N]}f(\mathsf x) $ are not eventually monotonically increasing or eventually monotonically decreasing.
\end{prop}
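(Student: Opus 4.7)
The plan is to adapt the proof of \Cref{updown} to the pointwise uniform-distribution setting, replacing the a.e.\ ergodic recurrence by the direct consequences of uniform distribution of $\mathsf x$. The two basic inequalities stated just before the proposition already give us the crucial one-step reduction:
\begin{equation*}
\setA_{[N+1]}f(\mathsf x)\le \setA_{[N]}f(\mathsf x) \iff f(x_{N+1}) \le \setA_{[N]}f(\mathsf x),
\end{equation*}
and similarly with $\ge$. So the proposition reduces to showing that $f(x_{N+1})$ cannot be eventually $\le \setA_{[N]}f(\mathsf x)$, nor eventually $\ge \setA_{[N]}f(\mathsf x)$.

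To this end, let $I=\int_0^1 f(t)\,dt$. Since $f$ is continuous and non-constant, there are open intervals $U,V\subset[0,1]$ and a $\delta>0$ with $f>I+\delta$ on $U$ and $f<I-\delta$ on $V$. By the definition of uniform distribution applied to continuous $f$, we have $\setA_{[N]}f(\mathsf x)\to I$, so there is $N_0$ with $|\setA_{[N]}f(\mathsf x)-I|<\delta/2$ for all $N\ge N_0$. Also, uniform distribution applied to the indicator of an interval (approximated above and below by continuous bump functions, or used directly via the Portmanteau-style reformulation of uniform distribution) gives that $\#\{n\le N: x_n\in U\}/N\to \lambda(U)>0$, and in particular $x_n\in U$ for infinitely many $n$; likewise for $V$.

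Now suppose the averages were eventually non-increasing, i.e.\ $\setA_{[N+1]}f(\mathsf x)\le \setA_{[N]}f(\mathsf x)$ for all $N\ge N_1\ge N_0$. By the reduction, $f(x_{N+1})\le \setA_{[N]}f(\mathsf x)$ for all $N\ge N_1$. Choosing $N\ge N_1$ with $x_{N+1}\in U$ (which exists by the previous paragraph), we get $I+\delta<f(x_{N+1})\le \setA_{[N]}f(\mathsf x)<I+\delta/2$, a contradiction. The symmetric argument with $V$ rules out eventually non-decreasing behavior. Combining both, infinitely often $\setA_{[N+1]}f(\mathsf x)>\setA_{[N]}f(\mathsf x)$ and infinitely often the reverse strict inequality holds.

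There is no real obstacle here; the only point to verify carefully is that uniform distribution of $\mathsf x$ forces $x_n$ to enter any nonempty open interval infinitely often, which follows from the definition since an open interval has positive Lebesgue measure. Once that observation is in hand, the argument is a direct translation of the proof of \Cref{updown} with the role of ergodic recurrence played by the visits of the deterministic sequence $(x_n)$ to $U$ and $V$.
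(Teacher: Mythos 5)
Your proposal is correct and matches the paper's own proof essentially step for step: both reduce via the one-step equivalence to comparing $f(x_{N+1})$ with $\setA_{[N]}f(\mathsf x)$, then combine $\setA_{[N]}f(\mathsf x)\to I$ with the fact that uniform distribution forces $(x_n)$ to visit the sets $\{f>I+\delta\}$ and $\{f<I-\delta\}$ infinitely often. Your use of distinct symbols $U,V$ for the intervals even cleans up a notational clash in the paper, where $I$ denotes both the integral and an interval.
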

\begin{proof}
Since $f$ is not a constant, there must be some $\delta > 0$ such that $f \ge \delta +I$ on some non-empty open interval $I$, and $f \le -\delta +I$  on some non-empty open interval $J$.  

Assume  $\setA_{[N+1]}f(\mathsf x) \le \setA_{[N]}f(\mathsf x)$ holds for all large enough values of $N$.  This means $f(x_{N+1}) \le \setA_{[N]}f(\mathsf x)$ for all large enough $N$.  But $\setA_{[N]}f(\mathsf x)\to I$ as $N\to \infty$.  Hence, for all large enough $N$, $\setA_{[N]}f(\mathsf x) \le \delta/2 + I$.  However, we have  $x_{N+1} \in  I$ for infinitely many $N$.  So $\delta + I \le f(x_{N+1})$ for infinitely many $N$.  If we put these inequalities together, we get values of $N$ such that $\delta + I \le f(x_{N+1}) \le \delta/2 +I$.  Then $\delta \le \delta/2$, which is not possible.

Similarly, assume   $\setA_{[N+1]}f(\mathsf x) \ge \setA_{[N]}f(\mathsf x)$ holds for all large enough values of $N$.  This means that $f(x_{N+1}) \ge \setA_{[N]}f(\mathsf x)$ for all large enough $N$.  But $\setA_{[N]}f(\mathsf x)\to I$ as $N\to \infty$.  Hence, for all large enough $N$, $\setA_{[N]}f(\mathsf x) \ge -\delta/2 + I$.  However, we have  $x_{N+1}\in  J$ for infinitely many $N$.  So $-\delta + I \ge f(x_{N+1})$ for infinitely many $N$.  If we put these inequalities together, we get values of $N$ such that $-\delta + I \ge f(x_{N+1}) \ge -\delta/2 +I$.  Then $-\delta \ge -\delta/2$, which is not possible.
\end{proof}

As in \Cref{prop:updowngap}, we can prove this more detailed result.

\begin{prop}\label{prop:updowngapud}    Suppose $f:[0,1]\to \mathbb R$ is not a constant function.  Then there exists $d > 0$ such that we have  $\setA_{[N+1]}f(\mathsf x) > \frac dN + \setA_{[N]}f(\mathsf x)$ infinitely often, and 
 $\setA_{[N+1]}f(\mathsf x) < -\frac dN + \setA_{[N]}f(\mathsf x)$ infinitely often.
\end{prop}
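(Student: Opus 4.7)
The plan is to mimic the proof of \Cref{prop:updowngap} step by step, using the basic equivalence established just before \Cref{prop:updownud}: the inequality $\setA_{[N+1]}f(\mathsf x) \ge \frac{d}{N} + \setA_{[N]}f(\mathsf x)$ holds if and only if $f(x_{N+1}) \ge \frac{(N+1)d}{N} + \setA_{[N]}f(\mathsf x)$, and similarly with the inequalities reversed. So every claim about the gap between consecutive averages translates into a claim about how $f(x_{N+1})$ sits relative to $\setA_{[N]}f(\mathsf x)$, which we will contradict using uniform distribution.

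First I would normalize by replacing $f$ with $f - I$, which changes neither differences of consecutive averages nor the hypothesis of non-constancy; so assume $I = 0$. Since $f$ is continuous (as is needed for the averages $\setA_{[N]}f(\mathsf x)$ to converge to $I$) and non-constant with mean zero, there exist $\delta > 0$ and nonempty open subintervals $A, B \subset [0,1]$ with $f \ge \delta$ on $A$ and $f \le -\delta$ on $B$. Set $d = \delta/6$.

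Next, suppose for contradiction that $\setA_{[N+1]}f(\mathsf x) \le \frac{d}{N} + \setA_{[N]}f(\mathsf x)$ for all sufficiently large $N$. By the equivalence above, this means $f(x_{N+1}) \le \frac{(N+1)d}{N} + \setA_{[N]}f(\mathsf x)$ eventually. Since $\mathsf x$ is uniformly distributed and $A$ has positive Lebesgue measure, $\#\{n \le N : x_n \in A\}/N \to \lambda(A) > 0$, so $x_{N+1} \in A$ for infinitely many $N$. For these $N$ we have $f(x_{N+1}) \ge \delta$. On the other hand, $\setA_{[N]}f(\mathsf x) \to 0$ gives $\setA_{[N]}f(\mathsf x) \le \delta/2$ eventually, and $\frac{(N+1)d}{N} \le 2d = \delta/3$ for large $N$, producing
\begin{equation*}
\delta \le \frac{(N+1)d}{N} + \setA_{[N]}f(\mathsf x) \le \frac{\delta}{3} + \frac{\delta}{2} = \frac{5\delta}{6},
\end{equation*}
a contradiction. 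A symmetric argument using $B$ shows that $\setA_{[N+1]}f(\mathsf x) < -\frac{d}{N} + \setA_{[N]}f(\mathsf x)$ infinitely often.

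There is no genuine obstacle here: the argument is an almost line-for-line transcription of the proof of \Cref{prop:updowngap}, with the role of the pointwise ergodic theorem played by convergence of $\setA_{[N]}f(\mathsf x)$ to $I$ for continuous $f$, and the role of recurrence into the sets $\{f \ge \delta\}$ and $\{f \le -\delta\}$ played by the uniform-distribution fact that $x_{N+1}$ visits any fixed open interval infinitely often. The only small care needed is to invoke continuity of $f$ to extract \emph{open} intervals $A, B$ on which $f$ is bounded away from its mean by $\delta$, so that the uniform-distribution hypothesis applies to them directly.
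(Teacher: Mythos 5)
Your proposal is correct and is exactly what the paper intends: it states \Cref{prop:updowngapud} with the remark that it is proved ``as in \Cref{prop:updowngap}'', and your argument is a faithful transcription of that proof, with uniform distribution of $(x_n)$ into the open intervals $A,B$ replacing recurrence, and convergence of $\setA_{[N]}f(\mathsf x)$ to $I$ (for continuous $f$) replacing the pointwise ergodic theorem. Your observation that continuity must be assumed (as in \Cref{prop:updownud}) to extract the open intervals and to have the averages converge is a worthwhile clarification of the statement's implicit hypothesis.
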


\Cref{prop:updownud} answers one possible question about how $\setA_{[N]}f(\mathsf x)$ behaves as $N\to \infty$.  But it does not address whether or not there are infinitely many sign changes for $\setA_{[N]}f(\mathsf x) - I$.   In fact, as we will see, infinitely many sign changes may or may not occur, depending on the function $f$ and the context of the dynamics.  It is not clear how to characterize when it does occur in general.

We consider the special case of $\mathsf x= \{x_n\}_n = \{n\theta\}_n$ for an irrational $\theta\in \mathbb R$.  In addition, we take the special case of a continuous coboundary $f$.  This is a continuous function $f(t)  = F(t) - F(\{t+\theta\})$ for some continuous $F:[0,1]\to \mathbb R$. Then
\begin{align*}
\setA_{[N]}f(\mathsf x)= \frac 1N\sum\limits_{n\in [N]} f(\{n\theta\}) =  
\frac 1N \Bigl (F(\{\theta\})- F(\{(N+1)\theta\})\Bigr ).
\end{align*}
While this converges to $0$ as $N\to \infty$, whether or not there are changes of sign depend on whether or not $F(\{\theta\})- F(\{(N+1)\theta\})$ changes sign infinitely often.  

As we argued for fluctuation in the Cesàro-average case, with suitable hypotheses on the transfer function we can argue for fluctuation in the case of averages of uniformly distributed sequences.

\begin{prop}\label{prop:osccobud}   Suppose  a continuous, real-valued function $f$ is a coboundary with the transfer function $F$ that is continuous, real-valued, and not a constant. 
\begin{enumerate}
    \item Suppose that $F(\{\theta\})$ is the maximum (or minimum) value of $F$.  Then the averages $\setA_{[N]}f(\mathsf x)$ converge to $0$ as $N\to \infty$, but $\setA_{[N]}f(\mathsf x) \ge 0$ (or $\setA_{[N]}f(\mathsf x) \le 0$) for all $N\ge 1$.
    \item Suppose that $F(\{\theta\})$ is not a maximum (or minimum) value for $F$.  Then the averages $\setA_{[N]}f(\mathsf x)$ converge to $0$ as $N\to \infty$ and $\setA_{[N]}f(\mathsf x)$ is infinitely often less (or greater) than $0$.
\end{enumerate}  
\end{prop}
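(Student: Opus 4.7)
The plan is to exploit the coboundary structure to collapse the Cesàro average to a single telescoped value, and then use density of the orbit $\{n\theta\}$ to locate where that value lies. First I would expand the sum using $f = F - F\circ T$ with $T t = \{t+\theta\}$:
\[\sum_{n\in[N]} f(\{n\theta\}) = \sum_{n\in[N]} \bigl(F(\{n\theta\}) - F(\{(n+1)\theta\})\bigr) = F(\{\theta\}) - F(\{(N+1)\theta\}),\]
so that
\[\setA_{[N]}f(\mathsf x) = \frac{F(\{\theta\}) - F(\{(N+1)\theta\})}{N}.\]
Continuity of $F$ on the compact space $[0,1]$ gives $\|F\|_\infty < \infty$, whence $|\setA_{[N]}f(\mathsf x)| \le 2\|F\|_\infty/N \to 0$. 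This disposes of the convergence claim in both parts (1) and (2) without any further input.

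For part (1), if $F(\{\theta\}) = \max F$, then the numerator $F(\{\theta\}) - F(\{(N+1)\theta\})$ is nonnegative for every $N$, so $\setA_{[N]}f(\mathsf x) \ge 0$ throughout. The minimum case follows by the symmetric argument (or by applying the maximum case to $-F$, noting that $-f = (-F) - (-F)\circ T$).

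For part (2), suppose $F(\{\theta\})$ is not the maximum, so there exists $s \in [0,1]$ with $F(s) > F(\{\theta\})$. By continuity of $F$, I can produce $\epsilon > 0$ and an open neighborhood $U$ of $s$ on which $F > F(\{\theta\}) + \epsilon$. Since $\theta$ is irrational, the orbit $(\{n\theta\})_n$ is dense in $[0,1]$ (indeed uniformly distributed), so $\{(N+1)\theta\} \in U$ for infinitely many $N$, and for those $N$ the numerator is strictly less than $-\epsilon$, giving $\setA_{[N]}f(\mathsf x) < -\epsilon/N < 0$. The minimum case is identical after replacing $F$ by $-F$.

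I do not expect any genuine obstacle: the whole proposition reduces to the telescoping identity, boundedness of a continuous function on $[0,1]$, and density of the irrational rotation orbit. The only subtlety worth flagging is that the hypothesis "$F(\{\theta\})$ is (not) an extremum" is used exactly once, to decide whether the sign of the numerator is constant or is forced to take the opposite sign infinitely often.
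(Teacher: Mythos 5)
Your proof is correct and follows essentially the same route the paper takes: the telescoping identity $\setA_{[N]}f(\mathsf x) = \frac{1}{N}\bigl(F(\{\theta\}) - F(\{(N+1)\theta\})\bigr)$ derived just before the proposition, boundedness of the continuous $F$ for convergence, and density of the irrational rotation orbit to force the opposite sign infinitely often when $F(\{\theta\})$ is not an extremum. No issues to flag.
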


 \begin{rem}
We have taken advantage of $f$ being a coboundary to get a simpler expression for $\setA_{[N]}f(\mathsf x)$.  This has allowed us to obtain \Cref{prop:osccobud}.  But then the following question is left unresolved.

 Suppose $f$ is continuous and $I = 0$.  Under what necessary and sufficient conditions do we have $\setA_{[N]}f(\mathsf x)$ infinitely often greater than $0$, and infinitely often less than $0$?  In the light of \Cref{thm:characterization} , we conjecture the following:
 \end{rem}
\begin{conj}
If for a continuous function $f$, $\setA_{[N]}f(\mathsf x)$ fails to fluctuate in a set of positive measure, then $f$ can be expressed as $f(t)=F(t)-F(\{t+\theta\})$, where $F$ is a real valued, non-constant function, and $\theta$ is a fixed real number.
\end{conj}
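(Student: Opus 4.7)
The plan is to treat the statement as an analog of \Cref{thm:characterization} in the uniform-distribution setting and to deduce it from that theorem. I interpret ``$\setA_{[N]}f(\mathsf x)$ fails to fluctuate in a set of positive measure'' as: there is a set $E\subset [0,1]$ of positive Lebesgue measure such that for each $x\in E$, the averages $\frac{1}{N}\sum_{n=1}^{N} f(\{x+n\theta\}) - \int f$ are eventually $\ge 0$ (or eventually $\le 0$) as $N\to\infty$. After replacing $f$ by $f-\int f$, we may assume $\int f=0$.

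With this reading, the rotation $T_\theta:x\mapsto\{x+\theta\}$ is ergodic on $([0,1],\lambda)$, and the averages in question are exactly $\setA_{[N]}f(T_\theta^n x)$. Applying \Cref{thm:characterization} directly to the system $([0,1],\mathcal B,\lambda,T_\theta)$ produces a measurable $F$ with $f=F-F\circ T_\theta$ almost everywhere and $F$ having either an upper or a lower barrier. Non-constancy of $F$ is automatic: if $F$ were constant then $F=F\circ T_\theta$ forces $f\equiv 0$, contradicting the standing assumption that $f$ is a non-constant continuous function. So at the level of a measurable transfer function the conjecture is essentially a corollary of \Cref{thm:characterization}.

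The genuinely open issue — and the likely reason the statement is a conjecture rather than a theorem — is whether $F$ can be upgraded to a \emph{continuous} function, to match the hypothesis of \Cref{prop:osccobud}. The natural tool is the Gottschalk--Hedlund theorem: for the minimal homeomorphism $T_\theta$, a continuous mean-zero $f$ is a continuous coboundary iff the Birkhoff sums $S_Nf(x)=\sum_{n=1}^{N} f(\{x+n\theta\})$ are uniformly bounded in $(N,x)$. One then tries to verify this uniform boundedness using the measurable transfer function $F(x)=\inf_N\sum_{n=0}^{N-1} f(T_\theta^n x)$ from the proof of \Cref{thm:characterization}: since each partial sum is continuous, $F$ is upper semicontinuous, and combining the barrier property with the continuity of $f$ and the minimality of $T_\theta$ should propagate the finite lower bound from $E$ to all of $[0,1]$.

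The hard part — where I expect the main obstacle to lie — is obtaining the matching uniform \emph{upper} bound on $S_Nf$. The one-sided hypothesis gives only that $\inf_N S_Nf$ is finite on $E$; converting this into a two-sided uniform bound cannot be done by soft arguments alone, because continuous coboundaries over rotations form a proper subclass of $L^1$-coboundaries. I would attempt this by exploiting the symmetry between the upper-barrier and lower-barrier cases (applying the argument simultaneously to $f$ and $-f$ along different pieces of the orbit), and by using unique ergodicity of $T_\theta$ to control $\sup_x S_Nf(x)$ in terms of $\|S_Nf\|_1$. Failing a fully general argument, one would likely have to impose a Diophantine condition on $\theta$ or extra regularity on $f$ to invoke Denjoy--Koksma-type estimates, which would at best prove the conjecture in a restricted setting and would clarify why the unrestricted statement remains open.
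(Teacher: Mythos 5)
This statement is posed in the paper as a conjecture: there is no proof of it in the paper, so there is nothing for your argument to be matched against, and by your own account your proposal does not close it either. More importantly, the part of your argument that does go through is not a new reduction. Under your reading of the hypothesis (the sequence is $\{n\theta\}$ and the failure of fluctuation is assumed for the translated averages $\frac1N\sum_{n\in[N]}f(\{x+n\theta\})$ for all $x$ in a set of positive measure), your first two paragraphs are literally the converse half of \Cref{thm:characterization} applied to the ergodic rotation $T_\theta$; the measurable transfer function with a barrier is exactly what that theorem already provides, so on this reading the conjecture would be an immediate restatement of a proved result, which is a strong sign the intended reading is different. In the setting of Section~\ref{UD} the hypothesis concerns a fixed uniformly distributed sequence $\mathsf x$, with no rotation given in advance: the number $\theta$ appears only in the \emph{conclusion}, so an argument must produce the rotation structure rather than assume it, and your interpretation builds it into the hypothesis. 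Two further literal mismatches: if $\int f\neq 0$ (in particular if $f$ is a non-zero constant, for which the averages trivially fail to fluctuate) the stated conclusion is false, and your normalization $f\mapsto f-\int f$ silently changes the statement; and your claim that non-constancy of $F$ follows from ``the standing assumption that $f$ is non-constant'' appeals to an assumption the conjecture does not actually make.

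The substantive gap is the one you yourself flag and do not close: upgrading the measurable transfer function to one compatible with the continuous setting of \Cref{prop:osccobud}. Gottschalk--Hedlund requires two-sided uniform control of the Birkhoff sums $S_Nf$, while the barrier/one-signedness hypothesis controls only $\inf_N S_Nf$ on a positive-measure set; measurable coboundaries of continuous functions over irrational rotations need not be continuous coboundaries, so no soft argument can bridge this, and your fallback (Diophantine conditions on $\theta$, Denjoy--Koksma estimates) would at best prove a restricted variant. So the proposal is a reasonable diagnosis of where the difficulty sits and how the conjecture relates to \Cref{thm:characterization}, but it is not a proof of the conjecture, and its completed portion does not add anything beyond what the paper has already established.
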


On the other hand, as in \Cref{thm:generic fluctuation}, we can easily prove this result.

\begin{prop}{}{} For a given irrational $\theta$, there is a dense $G_\delta$ set $\mathcal O$ in the real-valued functions in $C[0,1]$, with the uniform norm, such that for all $f\in \mathcal O$, we have\   $\setA_{[N]}f(\mathsf x) > \int_0^1 f(t) \, dt$ infinitely often, and $\setA_{[N]}f(\mathsf x) < \int_0^1 f(t) dt$ infinitely often.
\end{prop}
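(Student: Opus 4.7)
The plan is to mirror the Baire category scheme used to prove \Cref{thm:generic fluctuation}, except that the measure-theoretic setting is replaced by pointwise evaluation at the single deterministic orbit $\mathsf x=(\{n\theta\})_{n\ge 1}$. For each $M\ge 1$, define
\[
S_M^+ = \Big\{f\in C[0,1] : \setA_{[N]}f(\mathsf x) > \int_0^1 f(t)\,dt \text{ for some } N\ge M\Big\},
\]
and $S_M^-$ analogously with the reverse inequality. I would take $\mathcal O = \bigcap_{M\ge 1}(S_M^+\cap S_M^-)$; once each $S_M^\pm$ is shown to be open and dense in $C[0,1]$ with the uniform norm, $\mathcal O$ is the desired dense $G_\delta$ and every $f\in \mathcal O$ satisfies the conclusion.

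Openness of $S_M^+$ is immediate: for each fixed $N$, the maps $f\mapsto \setA_{[N]}f(\mathsf x)$ and $f\mapsto \int f$ are continuous linear functionals in the uniform norm, so the set where the first strictly exceeds the second is open, and $S_M^+$ is the union of these over $N\ge M$. The main step is density. Fix $g\in C[0,1]$ and $\varepsilon>0$; I build $f\in S_M^+$ with $\|f-g\|_\infty<\varepsilon$. First, using uniform distribution of $(\{n\theta\})$ and continuity of $g$, pick $N_0\ge M$ so that $|\setA_{[N_0]}g(\mathsf x)-\int g|<\varepsilon/8$. Because $\theta$ is irrational, the points $\{\theta\},\{2\theta\},\dots,\{N_0\theta\}$ are pairwise distinct; choose pairwise disjoint open intervals $I_n\ni \{n\theta\}$ of total Lebesgue measure so small that any continuous bump $h$ supported in $\bigcup_{n=1}^{N_0} I_n$ with $0\le h\le \varepsilon/2$ and $h(\{n\theta\})=\varepsilon/2$ satisfies $\int h<\varepsilon/8$. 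Such an $h$ exists; a piecewise-linear triangle on each $I_n$ will do. Then $\|h\|_\infty\le \varepsilon/2$, $\setA_{[N_0]}h(\mathsf x)=\varepsilon/2$, and $f:=g+h$ satisfies
\[
\setA_{[N_0]}f(\mathsf x)-\int f \;=\; \big(\setA_{[N_0]}g-\textstyle\int g\big)+\big(\setA_{[N_0]}h-\textstyle\int h\big) \;>\; -\tfrac{\varepsilon}{8} + \tfrac{3\varepsilon}{8} \;>\;0,
\]
so $f\in S_M^+$ and $\|f-g\|_\infty\le \varepsilon/2<\varepsilon$. Density of $S_M^-$ follows at once by replacing $h$ with $-h$ in the construction.

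The main obstacle, such as it is, is the bump construction, and it is quite mild. The irrationality of $\theta$ ensures that the first $N_0$ orbit points are distinct, which in turn lets us concentrate the perturbation $h$ on the orbit up to time $N_0$ and thereby produce a Cesàro average of order $\varepsilon$ against an integral of order $\varepsilon^2$. Once this is in place, the dominance $\setA_{[N_0]}h-\int h \gg |\setA_{[N_0]}g-\int g|$ is the only quantitative ingredient needed, and the Baire-category conclusion is formal; note in particular that no strong sweeping out input (which was essential in \Cref{thm:generic fluctuation}) is required here, since we are working along a single deterministic orbit rather than asking for an almost everywhere statement.
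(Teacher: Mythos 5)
Your proof is correct, and it follows exactly the Baire category scheme the paper has in mind when it says the result can be proved ``as in \Cref{thm:generic fluctuation}'': the sets $S_M^{\pm}$, their openness from continuity of the two linear functionals $f\mapsto \setA_{[N]}f(\mathsf x)$ and $f\mapsto\int_0^1 f$, and $\mathcal O=\bigcap_M (S_M^+\cap S_M^-)$. The only place where you genuinely depart from the paper's model argument is the density step, and your choice there is the right one: in the proof of \Cref{thm:generic fluctuation} density is obtained by perturbing coboundaries with an indicator of a small set and invoking the strong sweeping out property of lacunary sequences, because the conclusion there is an almost everywhere statement over all of $X$; here the conclusion concerns the single fixed orbit $(\{n\theta\})_{n\ge 1}$, so a continuous bump of height $\varepsilon/2$ concentrated on tiny disjoint intervals around the finitely many distinct points $\{\theta\},\dots,\{N_0\theta\}$ lifts the average of order $N_0$ by $\varepsilon/2$ while contributing almost nothing to the integral, and no sweeping out input is needed. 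Your quantitative bookkeeping ($|\setA_{[N_0]}g(\mathsf x)-\int g|<\varepsilon/8$, $\setA_{[N_0]}h(\mathsf x)=\varepsilon/2$, $\int h<\varepsilon/8$) checks out, and handling $S_M^-$ by replacing $h$ with $-h$ is equivalent to the paper's remark that one inequality generically implies the other via $f\mapsto -f$.
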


 \begin{rem}  As remarked before, we only need to get one of these inequalities to hold generically, and then we can argue that they both do, at least generically of course.
\qed
 \end{rem}

\section{General weighted averages in ergodic theory}\label{GenCET}

Here we take a dissipative sequence $\mu_N$ of probability measures on  $\mathbb Z$.  Our operators $\mu_N^Tf = \sum\limits_{n\in \mathbb Z} \mu_N(n) f\circ T^n$.   Now we ask when these operators always fluctuate.  Because we are not assuming there is pointwise a.e. convergence, or even perhaps $L^1$-norm convergence, there is no clear limit about which we could consider fluctuation.  However, in the case that there is norm convergence, fluctuation around the limit will be worthwhile to consider.

\begin{conj} Let $\mu_N^T$ be a dissipative sequence of operators such that $\mu_N^Tf \to \int f$ in $L^1$-norm for every $f\in L^1$. Then there is a dense $G_\delta$ set $\mathcal{O} \subset L^1$ such that for every $f\in \mathcal{O}$, the averages $\mu_N^Tf(x)$ fluctuate infinitely often around $\int f$ for almost every $x\in X$.
\end{conj}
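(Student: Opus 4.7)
The plan is to mirror the Baire category strategy used in \Cref{thm:generic fluctuation} and \Cref{thm:osc}. For each $M \geq 1$, define
\begin{align*}
S_M^+ &= \Big\{f \in L^1 : \mu\{x : \mu_N^T f(x) > \textstyle\int f \text{ for some } N \geq M\} > 1 - \tfrac{1}{M}\Big\},\\
S_M^- &= \Big\{f \in L^1 : \mu\{x : \mu_N^T f(x) < \textstyle\int f \text{ for some } N \geq M\} > 1 - \tfrac{1}{M}\Big\}.
\end{align*}
Since $f \in S_M^+$ iff $-f \in S_M^-$, it suffices to prove that each $S_M^+$ is open and dense; then $\mathcal{O} = \bigcap_M (S_M^+ \cap S_M^-)$ is the claimed dense $G_\delta$ set.

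Openness follows by the argument of \Cref{prop:osc1}. Because each $\mu_N$ is a probability measure, $\mu_N^T$ is a contraction on $L^1$, so Markov's inequality gives $\mu\{x : |\mu_N^T(f-g)(x)| > \delta\} \leq \|f-g\|_1/\delta$ uniformly in $N$. Combined with the continuity of $f \mapsto \int f$, this produces an explicit $L^1$-ball around every $f \in S_M^+$ that stays inside $S_M^+$, exactly as in the classical ergodic averages case.

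The hard part is density. By ergodicity of $T$ (implicit in the statement through $\mu_N^T f \to \int f$ being a constant), the coboundaries $\{h \circ T - h : h \in L^\infty\}$ are dense in $L^1_0$, so one fixes such a $g = h\circ T - h$ with $\|h\|_\infty = K$ and perturbs $h$ to $\tilde h = h + 3K\mathbbm{1}_E$ for a carefully chosen set $E$ of small measure. The resulting $f = \tilde h \circ T - \tilde h$ is $L^1$-close to $g$, and on $E^c$ a direct computation shows that $\mu_N^T f(x)$ differs from $3K \cdot \mu_N^T(\mathbbm{1}_E \circ T - \mathbbm{1}_E)(x)$ by a quantity bounded by $2K \cdot \mu_N^T \mathbbm{1}_{\{|h\circ T - h|>0\}}(x)$. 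Consequently the whole density argument reduces to a \emph{sweeping-out-type property} for the operators $(\mu_N^T)$: for every $\eta > 0$ and every $M$ there should exist a set $E$ with $\mu(E) < \eta$ such that for a.e. $x$ some $N \geq M$ satisfies $\mu_N^T \mathbbm{1}_E(x)$ close to $1$.

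The main obstacle is verifying this sweeping-out property from only the hypotheses of dissipativity plus mean-ergodic convergence, since neither pointwise a.e. convergence nor any lacunary structure is assumed. Two complementary routes seem promising. First, if $(\mu_N)$ admits a subsequence whose supports are sufficiently lacunary, one can invoke strong sweeping out results such as \cite[Corollary 1.2]{ABJLRW} directly on this subsequence; this is the cleanest path whenever it applies. Second, one can bypass any a priori sweeping out by adapting the inductive Rademacher construction used in the proof of \Cref{thm:bothwaysae}: inductively pick scales $N_1 < N_2 < \cdots$ with $N_j \geq M$, and Rademacher-type building blocks $r_{m_j}$ whose $\mu_{N_j}^T$-images are nearly orthogonal, then combine them with rapidly decreasing coefficients to produce an arbitrarily small $L^1$-perturbation forcing $\mu_{N_j}^T f(x) > \int f$ on a set of measure at least $1 - \eta$ for some $j$. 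The delicate step in route (ii) is controlling $\mu_{N_j}^T r_{m_l}$ for $l < j$, which is where dissipativity of the $\mu_N$ must do its work: spreading mass out allows $\mu_{N_j}^T$ to average away the high-frequency oscillations of the previously chosen Rademacher blocks.
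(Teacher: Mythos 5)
This statement is left as an open conjecture in Section~\ref{GenCET}; the paper offers no proof of it, and your proposal, as written, is a programme rather than a proof, with the gap sitting exactly at its central step. The openness of $S_M^{\pm}$ is indeed routine ($\mu_N^T$ is an $L^1$-contraction, so Markov's inequality plus the reduction to finitely many indices as in \Cref{prop:osc1} goes through). But the density step is not established: you correctly reduce it to a sweeping-out-type property for the operators $(\mu_N^T)$, and then offer two ``promising routes,'' neither of which is carried out. Route (i) assumes a lacunary structure on the supports of the $\mu_N$ that the conjecture does not grant, and which fails for natural dissipative examples such as the binomial averages $\setA^b_{[N]}$ discussed in Section~\ref{GenCET}. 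Route (ii) leaves unproved precisely the ``delicate step'' of controlling $\mu_{N_j}^T r_{m_l}$ for $l<j$ --- and it is not even clear what the Rademacher-type blocks should be in an abstract measure-preserving system; dissipativity by itself carries no rate and no pointwise information, so it cannot be expected to ``average away'' fixed functions. All the strong sweeping out results the paper uses (\Cref{lem:sso}, via \cite{ABJLRW}) require lacunarity or comparable structure, and whether any such property holds for a general dissipative, merely norm-convergent weight sequence is essentially the open content of the conjecture itself.

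There is also a concrete problem in the reduction you describe on $E^c$: it implicitly relies on the telescoping identity $\setA_{[N]}(h\circ T-h)(T^n x)=\frac{1}{N}\big(h(T^{N+1}x)-h(Tx)\big)$, which is special to Ces\`aro averages of coboundaries. For general weights, $\mu_N^T(h\circ T-h)(x)=\sum_{n}\mu_N(n)\big(h(T^{n+1}x)-h(T^n x)\big)$ is only small in $L^1$-norm (because the coboundary has mean zero and you assume norm convergence), not pointwise; likewise $\mu_N^T(\setone_E\circ T-\setone_E)$ is a difference of two averages lying in $[0,1]$, not a single dominant ``hit'' as in the proof of \Cref{thm:generic fluctuation}, where the perturbation contributes $3K\setone_E(T^{N_i+1}x)/N_i$ against an error of size at most $2K/N_i$. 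So even granting a sweeping-out set $E$ with $\mu_N^T\setone_E$ close to $1$ for suitable $N$, you must still beat a pointwise error $\mu_N^T g$ that you can control only along a subsequence, only almost everywhere, and with no uniform rate; the bookkeeping that makes the perturbation dominate is not supplied. In short, the architecture (open plus dense sets $S_M^{\pm}$, coboundary perturbation) is the natural one and matches the paper's proofs of \Cref{thm:generic fluctuation} and \Cref{thm:osc}, but the density/sweeping-out mechanism in the stated generality is exactly what is missing --- which is why the paper records this statement as a conjecture rather than a theorem.
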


\noindent The fluctuation behavior of the classical averages clearly depended on the fact that these are Ces\`aro averages.  It may be worthwhile to ask when similar behavior holds for other methods of averaging like the ones above.
\begin{exmp}
Consider an ergodic map $T$ that has $-1$ as an eigenvalue.  Let $f\in L^1$ be mean-zero eigenvalue for $-1$, so that $f+ f\circ T = 0$.  Hence, $\setA^{\frac{1}{2}}f(T x) := \frac 12 \Big(f (Tx) + f (T^2x\Big) = 0$.  But this does not remain so, indeed for even $N$, $\setA^{\frac{1}{2}}_{[N]}f(T^n x) = 0$ but for odd $N$, $\setA^{\frac{1}{2}}_{[N]}f(T^n x) = -\frac 1N f$.  So the averages converge to $0$, but not monotone, and they do not switch sign infinitely often.
\end{exmp} 

However, if we use a different averaging method we can get a very different result.  Take $\setA^b_{[N]}f =\frac {1}{2^N} \sum\limits_{n\in [N]} {N\choose n} f\circ T^n $ for all $N\ge 1$. These averages do converge in $L^1$-norm to $\int f \, d\lambda$ for all $f\in L^1$, although they do not converge a.e. in general. See ~\cite{RRR}.  And we certainly can have monotonicity since  we have $\setA^b_{[N]}f = 0$ for all $N$ when $f\circ T = -f$ as above.  Of course, we do not have fluctuation of the sign.
\qed

This example makes it clear that whether, and when, we have monotonicity of the averages depends on the method of averaging that we use.  Moreover, for which $f$ and $T$ we have fluctuation of the values about the mean will also depend on the averaging method used.  

On the other hand, we can get some similar results to what happens using the classical ergodic averages, at least when using more general Ces\`aro averages.  This is in line with what was done in Section~\ref{UD}, where general uniformly distributed sequences were used instead of just the classical sequence $(\{n\theta\}:n\ge 1)$ with $\theta \in \mathbb R$ irrational.

\begin{prop}{}{}  Consider an increasing sequence $(m_n:n\ge 1)$ in $\mathbb Z^+$.  Assume that for all the $f\in L^2$, averages $\setA_{[N]}f(T^{m_n}x) := \frac 1N\sum\limits_{n\in [N]} f(T^{m_n} x)$ converge to $\int f \, d\lambda$ a.e.  Then for a non-constant $f\in L^2$, for a.e. $x$, the averages $\setA_{[N+1]}f(T^{m_n}x) > \setA_{[N]}f(T^{m_n}x)$ infinitely often, and the averages $\setA_{[N+1]}f(T^{m_n}x) < \setA_{[N]}f(T^{m_n}x)$ infinitely often.
\end{prop}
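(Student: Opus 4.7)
The proof plan is to mimic the argument for Proposition~\ref{updown}, since the proof there only used two ingredients about the classical ergodic averages: (i) the pointwise convergence of $\setA_{[N]}f(T^n x)$ to $\int f$, and (ii) the recurrence of the orbit $(T^n x)$ into any set of positive measure. Both ingredients are available here directly from the hypothesis that $\setA_{[N]} h(T^{m_n}x) \to \int h$ a.e.\ for every $h \in L^2$.

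First, I would reduce to the case $\int f\, d\mu = 0$ by subtracting a constant. Since $f$ is non-constant, I can fix $\delta > 0$ together with measurable sets $A, B$ of positive measure such that $f \ge \delta$ on $A$ and $f \le -\delta$ on $B$. Applying the hypothesis to $f$ itself gives, for a.e.\ $x$, an index $N_0 = N_0(x)$ with $|\setA_{[N]} f(T^{m_n}x)| \le \delta/2$ for all $N \ge N_0$. Applying it to $\setone_A, \setone_B \in L^\infty \subset L^2$ yields, for a.e.\ $x$,
\begin{equation*}
    \frac{1}{N}\sum_{n\in [N]} \setone_A(T^{m_n}x) \to \mu(A) > 0, \qquad \frac{1}{N}\sum_{n\in [N]} \setone_B(T^{m_n}x) \to \mu(B) > 0,
\end{equation*}
so in particular $T^{m_{N+1}}x \in A$ for infinitely many $N$ and $T^{m_{N+1}}x \in B$ for infinitely many $N$.

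Next I would record the one-line algebraic identity that drives everything:
\begin{equation*}
    \setA_{[N+1]}f(T^{m_n}x) - \setA_{[N]}f(T^{m_n}x) = \frac{1}{N+1}\bigl(f(T^{m_{N+1}}x) - \setA_{[N]}f(T^{m_n}x)\bigr),
\end{equation*}
so that $\setA_{[N+1]}f(T^{m_n}x) \le \setA_{[N]}f(T^{m_n}x)$ is equivalent to $f(T^{m_{N+1}}x) \le \setA_{[N]}f(T^{m_n}x)$, and similarly with $\ge$. If for some $x$ in the full-measure set above we had $\setA_{[N+1]}f(T^{m_n}x) \le \setA_{[N]}f(T^{m_n}x)$ for all large $N$, then for those $N$ we would have $f(T^{m_{N+1}}x) \le \delta/2$; combining with recurrence into $A$ gives $\delta \le \delta/2$, a contradiction. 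The mirror argument with $B$ rules out $\setA_{[N+1]}f \ge \setA_{[N]}f$ eventually.

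There is essentially no obstacle here beyond verifying that the hypothesis supplies the correct substitute for the pointwise ergodic theorem and for the recurrence of $(T^n x)$. The only place one must be careful is to note that the indicator functions $\setone_A$ and $\setone_B$ lie in $L^2$ (since $\mu$ is a probability measure), so the hypothesis applies to them and gives the needed infinitely-often visits of $T^{m_{N+1}}x$ to $A$ and to $B$. Everything else is the same contradiction argument as in the proof of Proposition~\ref{updown}.
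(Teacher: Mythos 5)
Your proof is correct, and it is exactly the argument the paper intends: the paper states this proposition without a separate proof precisely because it is the verbatim adaptation of the proof of \Cref{updown}, with the hypothesized a.e.\ convergence of $\setA_{[N]}h(T^{m_n}x)$ for $h=f,\setone_A,\setone_B\in L^2$ replacing the pointwise ergodic theorem and the recurrence of the orbit. Your one caveat (that $\setone_A,\setone_B\in L^2$ so the hypothesis applies and yields infinitely many visits of $T^{m_{N+1}}x$ to $A$ and $B$) is the right thing to check, and the rest of the contradiction argument goes through unchanged.
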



\section*{Acknowledgement}
The authors would like to thank Terence Adams for his valuable input throughout the paper. We thank the referees for careful reading of the manuscript and useful remarks that led to the improvement of the presentation.

\printbibliography

\end{document}